\DeclarePairedDelimiter\abs{\lvert}{\rvert}
\DeclarePairedDelimiter\norm{\lVert}{\rVert}
\newcommand\restr[2]{{% we make the whole thing an ordinary symbol
  \left.\kern-\nulldelimiterspace % automatically resize the bar with \right
  #1 % the function
  \vphantom{\big|} % pretend it's a little taller at normal size
  \right|_{#2} % this is the delimiter
  }}
\DeclareMathOperator{\diag}{diag}
\DeclareMathOperator*{\fl}{fl}
\DeclareMathOperator{\sign}{sign}
\DeclareMathOperator{\re}{Re}
\DeclareMathOperator{\im}{Im}
\DeclareMathOperator{\Wis}{\textsc{Wishart}}
\DeclareMathOperator{\WB}{\mathscr{WB}}
\DeclareMathOperator{\SB}{\mathscr{SB}}
\DeclareMathOperator{\E}{\mathscr{E}}
\theoremstyle{definition}
\newtheorem{theorem}{Theorem}[section]
\newtheorem{definition}[theorem]{Definition}
\newtheorem{lemma}[theorem]{Lemma}
\newtheorem{corollary}[theorem]{Corollary}
\newtheorem{proposition}[theorem]{Proposition}
\numberwithin{equation}{section}
\newcommand{\ur}{\mathsf{u}}
\newcommand{\tp}{{\scriptscriptstyle\mathsf{T}}}
\newcommand*{\method}[1]{\scriptscriptstyle\mathsf{#1}}
\newcommand*{\doublemethod}[2]{\scriptscriptstyle(\mathsf{#1},{#2})}
\newcommand*{\overeqU}[1]{\ensuremath{\mathrel{\overset{\method{#1}}{=}}}}
\newcommand*{\overeq}[1]{\mathrel{\overset{\method{#1}}{\resizebox{\widthof{\kern1.25pt\overeqU{\method{#1}}}}{\heightof{$=$}}{$=$}}}}
\newcommand*{\undereqU}[1]{\ensuremath{\mathrel{\underset{\method{#1}}{=}}}}
\newcommand*{\undereq}[1]{\mathrel{\underset{\method{#1}}{\resizebox{\widthof{\kern1.25pt\undereqU{\method{#1}}}}{\heightof{$=$}}{$=$}}}}
\newcommand*{\overundereq}[2]
{\mathrel{\overset{\method{#1}}{\mathrel{\underset{\method{#2}}{\resizebox{\widthof{\kern1.25pt\undereqU{\method{#2}}}}{\heightof{$=$}}{$=$}}}}}}
\newcommand*{\doubleovereq}[2]{\mathrel{\overset{\doublemethod{#1}{#2}}{\resizebox{\widthof{\kern1.25pt\overeqU{\doublemethod{#1}{#2}}}}{\heightof{$=$}}{$=$}}}}
\newcommand{\blue}[1]{{\color{blue}#1}}
\setlist[description]{font=\normalfont\itshape,labelindent=2em}
\begin{document}

\title{Summing divergent matrix series}

\author{Rongbiao Wang}
\address{Computational and Applied Mathematics, University of Chicago, Chicago, IL 60637}
\email{rbwang@uchicago.edu}

\author{JungHo Lee}
\address{Department of Statistics and Data Science, Carnegie Mellon University, Pittsburgh, PA  15213}
\email{junghol@andrew.cmu.edu}

\author{Lek-Heng Lim}
\address{Computational and Applied Mathematics Initiative, University of Chicago, Chicago, IL 60637}
\email{lekheng@uchicago.edu}

\begin{abstract}
We extend several celebrated methods in classical analysis for summing series of complex numbers to series of complex matrices. These include the summation methods of Abel, Borel, Ces\`aro, Euler, Lambert, N\"orlund, and Mittag-Leffler, which are frequently used to sum scalar series that are divergent in the conventional sense. One feature of our matrix extensions is that they are fully noncommutative generalizations of their scalar counterparts---not only is the scalar series replaced by a matrix series, positive weights are replaced by positive definite matrix weights, order on $\mathbb{R}$ replaced by Loewner order, exponential function replaced by matrix exponential function, etc.  We will establish the regularity of our matrix summation methods, i.e., when applied to a matrix series convergent in the conventional sense, we obtain the same value for the sum. Our second goal is to provide numerical algorithms that work in conjunction with these summation methods. We discuss how the block and mixed-block summation algorithms, the Kahan compensated summation algorithm, may be applied to matrix sums with similar roundoff error bounds. These summation methods and algorithms apply not only to power or Taylor series of matrices but to any general matrix series including matrix Fourier and Dirichlet series. We will demonstrate the utility of these summation methods: establishing a Fej\'{e}r's theorem and alleviating the Gibbs phenomenon for matrix Fourier series; extending the domains of matrix functions and accurately evaluating them; enhancing the matrix Pad\'e approximation and Schur--Parlett algorithms; and more.
\end{abstract}

\subjclass{15A16, 40D05, 40G10, 47A56, 65B10, 65F60}

\keywords{divergent matrix series, matrix series summability, matrix functions, Kahan summation, Pad\'e approximation, Schur--Parlett algorithm}

\maketitle

\section{Introduction}\label{sec:intro}

As we learned in calculus or real analysis, whenever we have an expression
\begin{equation}\label{eq:=}
\sum_{k=0}^\infty a_k=s
\end{equation}
for some  $a_k \in \mathbb{C}$, $k=0,1,2,\dots$, and $s \in \mathbb{C}$, the meaning of `$=$' is \emph{defined} to be the convergence of the sequence of partial sums $s_n \coloneqq \sum_{k=0}^n a_k$ to the limit $s$ in the standard Euclidean metric $\lvert \,\cdot\,\rvert$ on $\mathbb{C}$. In this case the series $\sum_{k=0}^\infty a_k$ is said to be convergent with value $s$; and if it does not meet this definition of convergence, then it is said to be divergent.

Because of its ubiquity and utility, we sometimes lose sight of the fact that such an interpretation of `$=$' in  \eqref{eq:=} is purely by convention, and not sacrosanct. A series divergent in the sense of the conventional definition may have a well-defined value under alternative definitions of `$=$' that are perfectly legitimate mathematically.  Take the harmonic series $\sum_{k=1}^\infty 1/k$ for illustration, well-known to be divergent in the conventional sense but as soon as we change, say, the choice of the metric from Euclidean to $p$-adic $\lvert \, \cdot \, \rvert_p$, it becomes convergent in the sense that $\lvert s_n -s\rvert_p \to 0$ for some value $s \in \mathbb{C}$ that depends on $p$ \cite{Boyd}. Indeed, a well-known result in $p$-adic analysis  \cite{robert} is that with a $p$-adic metric, a series $\sum_{k=0}^\infty a_k$ is convergent if and only if $\lim_{k \to \infty} a_k  = 0$, obviously false by the conventional definition of series convergence.

Even if we restrict ourselves to the Euclidean metric, which is what we will do in the rest of this article, the meaning of `$=$' still depends on a specific way to sum the values $a_k \in \mathbb{C}$, $k=0,1,2,\dots$. As was known to early analysts, there are many other reasonable ways to assign a value to a series that is divergent in the conventional sense, and such values are mathematically informative and useful in many ways \cite{hardy}.  As Hardy pointed out  \cite{hardy}, a \emph{summation method} just needs to be a function from the set of infinite series to values, assigning a sum to a series, which may or may not be convergent in the conventional sense. 

The first and best-known summation method is likely Ces\`aro summation \cite{cesaro1890multiplication}, that allows one to sum the Grandi series $1-1+1-1+\dotsb$ to $1/2$. The idea can be traced back even earlier to Leibniz, d'Alambert, Cauchy, and other predecessors of  Ces\`aro \cite{hardy, borel}.  Ces\`aro summation has the property of being \emph{regular}, i.e., for a series that is convergent in the conventional sense, the method gives an identical value for the sum. Regular summation methods have been studied extensively \cite{boos, hardy, peyerimhoff69, borel} and applied in various fields from analytic number theory \cite{wiener32} to quantum field theory \cite{glimmjaffe} to statistics \cite{guraukrajewski15}. Indeed summing divergent series is an important aspect of \emph{renormalization},  a cornerstone of modern physics  \cite{weinberg}, particularly in the renormalization technique of \emph{zeta function regularization} \cite{hawking}.

A main goal of our article is to show that many if not most of these summation methods for series of complex numbers extend readily and naturally to series of complex matrices. Take a toy example for illustration: the Neumann series
\begin{equation}\label{eq:neumann}
\sum_{k=0}^\infty X^k = (I-X)^{-1}
\end{equation}
if and only if the spectral radius of $X\in \mathbb{C}^{d\times d}$ is less than $1$. Again `$=$'  here is interpreted in the sense of conventional summation, i.e., the sequence of partial sums $S_n \coloneqq \sum_{k=0}^n X^k$ converges to $ (I-X)^{-1}$ with respect to any matrix norm $\lVert \, \cdot \, \rVert$. Let $\lambda(X)$ denote the spectrum of $X$ and $\mathbb{D} \coloneqq \{ z \in \mathbb{C} : \lvert z \rvert < 1 \}$ the  complex open unit disc. Depending on which method we use to sum the series on the left-hand side of \eqref{eq:neumann}, we obtain different interpretations of `$=$':
\begin{description}
\item[conventional] \eqref{eq:neumann} holds if and only if $\lambda(X) \subseteq \mathbb{D}$;
    \item[Abel] \eqref{eq:neumann} holds if and only if  $\lambda(X)\subseteq \overline{\mathbb{D}} \setminus \{1\}$;
    \item[Ces\`aro] \eqref{eq:neumann} holds if and only if $\lambda(X)\subseteq \overline{\mathbb{D}} \setminus \{1\}$ and the geometric and algebraic multiplicities are equal for each eigenvalue in $\lambda(X)\cap \partial \mathbb{D} \setminus \{1\}$;
    \item[Euler] \eqref{eq:neumann} holds if and only if $\lambda((I+P)^{-1}(P+X))\subseteq \mathbb{D}$ for some $P \succ 0$ commuting with $X$;
    \item[Borel] \eqref{eq:neumann} holds if and only if   $\lambda(X) \subseteq \{z \in \mathbb{C} :  \re(\lambda)<1\}$.
\end{description}
The last four summation methods will be defined in due course. In case the reader is wondering, although the matrix $(I - X)^{-1}$  is well-defined as long as $1 \notin \lambda(X)$, we will see that there is no natural method that will extend the validity of \eqref{eq:neumann} to all $X\in \mathbb{C}^{d \times d}$ with $\lambda(X)\subseteq \mathbb{C} \setminus\{1\}$.

In the toy example above, the series in question is a \emph{power series} where the $k$th term is a scalar multiple of $X^k$. The matrix summation methods in our article will apply more generally to \emph{any series} of matrices $\sum_{k=0}^\infty A_k$, where $A_k$ may not be Taylor in nature, i.e., $(X - \alpha I)^k$, but may be Fourier $\sin( kX)$, Dirichlet $ \exp( X \log k )$, Hadamard powers $X^{\circ k}$, or yet other forms not covered in this article, e.g., it could be defined by a recurrence relation $A_k = B A_{k-1} (I - A_{k-1})$ or randomly generated $A_k \sim \Wis(\Sigma, n)$.

So a second goal of our article is to provide practical numerical algorithms that complement our theoretical summation methods. These algorithms will allow us to compute, in standard floating point arithmetic, a matrix $\widehat{S} \in \mathbb{C}^{d \times d}$ that approximates the theoretical sum $S \in \mathbb{C}^{d \times d}$  of the series $\sum_{k=0}^\infty A_k$ given by the respective summation method.

These two aspects are complementary: There is no numerical method that would allow one to ascertain the convergence of a series, regardless of which summation method we use. A standard example is the harmonic series $\sum_{k=1}^\infty 1/k$; every numerical method would yield a finite value \cite[Section~4.2]{Higham_Accuracy}, which is completely meaningless since its true value is $+\infty$. On the other hand, most of the matrix series we encounter will have no alternate closed-form expressions, again regardless of which summation method we use. The only way to obtain an approximate value would  be through computing one in floating point arithmetic.  In summary, the theoretical summation method permits us to determine convergence; and its corresponding numerical algorithm permits us to determine an approximate value. We provide an overview of these two aspects of our work. 

\subsection*{Theoretical: regular summation methods}

As in the case of its scalar counterpart, a matrix summation method is a \emph{partial function}, i.e., possibly defined on a subset of its stated domain, from the set of $d\times d$ complex matrix series to a sum in $\mathbb{C}^{d\times d}$. We will generalize five classes of summation methods for scalar-valued series to matrix-valued ones. Figure~\ref{fig:tree} organizes them in a tree.
\begin{figure}[hbt]
    \centering
    \begin{tikzpicture}[on top/.style={preaction={draw=white,-,line width=#1}},
on top/.default=4pt, >={Classical TikZ Rightarrow[]}]
        \node (SM) {Summation Methods}[sibling distance = 15em, level distance = 10ex]
            child {node (S) {Sequential} [sibling distance = 6em, level distance = 9ex]
            child {node (N) {N\"{o}rlund} [sibling distance = 6em, level distance = 9ex]
            child {node (C) {Ces\`aro}}}
            child {node (E) {Euler}}}
            child {node (F) {Functional} [sibling distance = 6em, level distance = 9ex]
            child {node (AM) {Abelian means} [sibling distance = 6em, level distance = 9ex]
            child {node (A) {Abel}}}
            child {node (L) {Lambert}}
            child {node (M) {Mittag-Leffler}
            [sibling distance = 8em, level distance = 9ex] child {node (WB) {weak Borel}}
            child {node (SB) {strong Borel}}}};
        \draw [->>] (C) -- (A);
        \draw [->>] (WB) -- (SB);
        \draw [->>] (E) -- (WB.west);
        \draw [<-] (N) -- (C);
        \draw [<-, on top = 5pt] (AM) -- (A);
        \draw [<-] (M) -- (WB);
        \draw [<-] (M) -- (SB);
        \draw [<-] (SM) -- (S);
        \draw [<-] (SM) -- (F);
        \draw [<-] (S) -- (N);
        \draw [<-] (S) -- (E);
        \draw [<-] (F) -- (AM);
        \draw [<-] (F) -- (L);
        \draw [<-] (F) -- (M);
    \end{tikzpicture}
    \caption{Relations between various methods: $a \to b$ means $a$-summation is a special case of $b$-summation; $a \twoheadrightarrow b$ means $a$-summable implies $b$-summable.}
    \label{fig:tree}
\end{figure}

The five summation methods fall under two broad categories, \emph{sequential} and \emph{functional}  methods, discussed in Sections~\ref{sec:seq} and~\ref{sec:func} respectively. These terminologies follow those  for scalar-valued series \cite{borel}. Basically, a sequential method transforms the terms of a series or its sequence of partial sums into another sequence, whereas a functional method would transform them into a function. We will generalize two of the most important sequential methods, N\"{o}rlund (of which Ces\`aro is a special case) and Euler; and three of the most important functional methods, Lambert,  Abelian means, and Mittag-Leffler (Abel and Borel summations are respectively special cases of the latter two); showing that they also work for matrix series. 

One feature of our generalizations that we wish to highlight is that they are truly matrix-valued to the fullest extent possible. For example, our generalization of N\"{o}rlund summation  $\lim_{n\to \infty} (\rho_0+\dots + \rho_n)^{-1}(\rho_n s_0+\rho_{n-1}s_1+\dots+\rho_0s_n)$ with $s_n \coloneqq \sum_{k=0}^n a_k$ does not just replace $a_k \in \mathbb{C}$ by matrices $A_k \in \mathbb{C}^{d \times d}$ but also the positive scalars $\rho_0,\dots, \rho_n$ by positive definite matrices  $P_0,\dots,P_n$. Our extension of Abel summation $\lim_{x\to 0} \sum_{k=0}^\infty a_k e^{-\rho_k x}$ does not merely replace $a_k \in \mathbb{C}$ by matrices $A_k \in \mathbb{C}^{d \times d}$ but also the increasing sequence $0 < \rho_0 < \rho_1 < \cdots$ by  a sequence of matrices increasing in Loewner order  $0 \prec P_0 \prec P _1 \prec \cdots$ and $e^x$ by the matrix exponential function.

\subsection*{Practical: numerical summation algorithms}

Once a matrix series is ascertained to be summable via one of the aforementioned theoretical methods, the corresponding numerical method would be used to provide an approximate value in the form of a finite sum. However, it is nontrivial to obtain an accurate value for this finite sum in finite precision arithmetic. Simply adding terms in the finite sum in any fixed order would not give the most accurate result. In Section~\ref{sec:gen}, we adapt three numerical summation algorithms in \cite{HighamBlanchard,Higham_Accuracy} for sums of matrices: 
\begin{enumerate}[\normalfont(i)]
    \item \emph{block summation:} divide the finite sum into equally-sized blocks and sum the local blocks recursively, then sum the local sums recursively;
    \item \emph{compensated summation:} keep a running compensation term to extend the precision;
    \item \emph{mixed block summation:}  divide the finite sum into equally-sized blocks and sum the local blocks with one algorithm, then sum the local sums with another algorithm.
\end{enumerate}
We stress that the summation methods above apply to \emph{any} series of matrices, not just power series of matrices like those commonly found in the matrix functions literature \cite{funcofmat}.  In Section~\ref{sec:pow}, for the special case when we do have a matrix power series, we extend two algorithms for summing matrix power series  \cite{funcofmat}  by enhancing them with the summation methods introduced in Section~\ref{sec:seq}:
\begin{enumerate}[\normalfont(i),resume]
    \item \emph{Pad\'e approximation}: best rational approximation of a matrix function at a given order;
    \item \emph{Schur--Parlett algorithm}: Schur decomposition followed by block Parlett recurrence.
\end{enumerate}
We present numerical experiments in Section~\ref{sec:numer} to illustrate the value and practicality of our summation methods:
\begin{enumerate}[\normalfont(a)]
\item  using Ces\`aro summation to alleviate Gibbs phenomenon in matrix Fourier series;
\item  using Euler and strong Borel summations to extend matrix Taylor series;
\item  using Euler summations for high accuracy evaluation of matrix functions;
\item  using Ces\`aro and Euler summations in Pad\'e approximations;
\item  using Lambert summation to investigate matrix Dirichlet series;
\item  using compensated matrix summation for accurate evaluation of Hadamard power series.
\end{enumerate}
There are some surprises. For example, for \eqref{eq:neumann}, using Euler sum to evaluate the Neumann series and using \textsc{Matlab}'s \texttt{inv} to invert $I -X$, Euler sum gives results that are an order of magnitude more accurate than \textsc{Matlab}'s \texttt{inv}; Gibbs phenomenon in matrix Fourier series  happens only when the matrix involved is diagonalizable; a well-known property of the Riemann zeta function remains true for the matrix zeta function; etc. 

As one of our goals is to compute an approximate value in floating arithmetic for the matrix series and summation methods studied in this article, so even though some of our theoretical results readily extend to Banach algebras we do not pursue this unnecessary generality.

\section{Conventions and notations}\label{sec:notations}

Recall that it does not matter which matrix norm we use since all norms on a finite-dimensional space $\mathbb{C}^{d \times d}$ are equivalent and thus induce the same topology.  Throughout this article, we will use $ \norm{\,\cdot\,}$ to denote the Euclidean norm on $\mathbb{C}^d$ and spectral norm on $\mathbb{C}^{d \times d}$:
\[
    \norm{A} \coloneqq \sup_{x \ne 0} \frac{\norm{Ax}}{\norm{x}}.
\]
Note that the norm notation is consistent if we adopt the standard convention of identifying vectors in $\mathbb{C}^d$ with single-column matrices in $\mathbb{C}^{d \times 1}$. For $P \in \mathbb{C}^{d \times d}$, we use the shorthand $P \succ 0$ for $P$ positive definite, i.e., $x^* P x>0$ for all nonzero $x \in \mathbb{C}^d$. Recall that this condition implies that $P$ must also be Hermitian \cite[p.~80]{Zhang} (but the analogous statement is not true over $\mathbb{R}$). More generally $\succ$ denotes the Loewner order, i.e., $A \succ B$ if and only if $A-B$ is positive definite. We write $I$ for the identity matrix and $\mathbbm{1}$ for the all one's matrix. We use $\lambda(X)$ to denote the spectrum of $X \in \mathbb{C}^{d \times d}$. A note of caution is that we do not treat $\lambda(X)$ as a multiset; whenever we write $\lambda(X) = \{\lambda_1,\dots,\lambda_r\}$, the elements $\lambda_i$'s are necessarily distinct. For example, $\lambda(I) = \{1\}$ always, not $\{1,\dots,1\}$.

We write $\re(x)$ and $\im(x)$ for the real and imaginary parts of $x \in \mathbb{C}$, respectively. We write $\mathbb{D} \coloneqq \{ z \in \mathbb{C} : \lvert z \rvert < 1 \}$ for the open unit disc and $\mathbb{N} \coloneqq \{0,1,2,\dots\}$ for the nonnegative integers. Unless noted otherwise, all sequences, summands in a series, partial sums, will be indexed by $\mathbb{N}$ throughout this article. We denote closure and boundary of a set $\Omega$  by $\overline{\Omega}$ and $\partial \Omega$ respectively.

We also lay out some formal definitions and standard notations \cite{sequence,Dunford_Schwarz,partial,rudin} related to sequences and series of matrices.
\begin{definition}
    A \emph{matrix sequence} $A_{\bullet} \coloneqq (A_k)_{k=0}^\infty$ is a map from $\mathbb{N}$ to $\mathbb{C}^{d \times d}$ whose value at $k\in \mathbb{N}$ is denoted by $A_k \in \mathbb{C}^{d \times d}$. We say that the matrix sequence $A_{\bullet}$ converges to $A\in \mathbb{C}^{ d \times d }$ if
    \[
        \lim_{k\to \infty} \lVert A_k-A \rVert = 0,
    \]
    and denote it by $\lim_{k\to \infty}A_k = A$.  We denote the vector space of matrix sequences by
    \[
        s(\mathbb{C}^{d \times d}) \coloneqq \bigl\{ A_{\bullet} :  A_k \in \mathbb{C}^{d \times d}, k \in \mathbb{N} \bigr\}
    \]
and its subspace of convergent matrix sequences by
    \[
        c(\mathbb{C}^{d \times d}) \coloneqq \Bigl\{  A_{\bullet} : \lim_{k\to \infty}A_k = A  \in \mathbb{C}^{d \times d} \Bigr \}.
    \]
\end{definition}

We speak of a \emph{series} when we are interested in summing a sequence. Therefore, a series $\sum_{k=0}^\infty A_k$ and its underlying sequence $A_\bullet$ are one-and-the-same object and we will not distinguish them. While the convergence of a matrix sequence is unambiguous throughout this article, the summability of a matrix series is not and will take on multiple different meanings.  Getting ahead of ourselves, we will be defining the $\mathsf{R}$-sum $S$ of a series $\sum_{k=0}^\infty A_k$ and writing
\begin{equation}\label{eq:Rsum}
    \sum_{k=0}^\infty A_k \overeq{R} S
\end{equation}
where different letters in place of $\mathsf{R}$ would refer to N\"orlund means ($\mathsf{N}$), Ces\`aro ($\mathsf{C}$), Euler ($\mathsf{E}$), Abelian means ($\mathsf{A}$), Lambert ($\mathsf{L}$), weak Borel ($\mathsf{WB}$), strong Borel ($\mathsf{SB}$), and Mittag-Leffler ($\mathsf{M}$) summations, all of which will be defined in due course. We say that $A_\bullet$ is $\mathsf{R}$-summable if there is a well-defined $\mathsf{R}$-sum $S \in \mathbb{C}^{d \times d}$. The absence of a letter would denote conventional summation, i.e., $S$ is the limit of its sequence of partial sums, $S_\bullet = (S_k)_{k=0}^\infty$, $S_n \coloneqq \sum_{k=0}^n A_k$.

As usual, we write $C_b(\Omega) \coloneqq C_b(\Omega, \mathbb{C})$ for the Banach space of complex-valued continuous functions equipped with the uniform norm; $\Omega$ will usually be an open interval in $\mathbb{R}$. We will often have to discuss matrices whose entries are in $C_b(\Omega)$, i.e., $A(x) =[a_{ij}(x)]$ with continuous and bounded $a_{ij} : \Omega \to \mathbb{C}$, $i,j=1,\dots,d$. We denote the space of such matrices as  $C_b(\Omega)^{d \times d}$. These may also be viewed as matrix-valued continuous maps $A : \Omega \to \mathbb{C}^{d \times d}$ or as tensor product of the two Banach spaces \cite{acta}:
\[
C_b(\Omega)^{d \times d} = C_b(\Omega, \mathbb{C}^{d \times d}) = C_b(\Omega) \otimes \mathbb{C}^{d \times d}.
\]
Indeed the tensor product view will be the neatest as we will also need to speak of $C_b(\Omega)^{d \times d}$-valued sequences:
\[
s\bigl(C_b(\Omega)^{d \times d}\bigr) = s(\mathbb{C}^{d \times d}) \otimes C_b(\Omega), \qquad c\bigl(C_b(\Omega)^{d \times d}\bigr) = c(\mathbb{C}^{d \times d}) \otimes C_b(\Omega)
\]
but we will avoid tensor products for fear of alienating readers unfamiliar with the notion.

We will occasionally use the notion of a \emph{partial function} to refer to a map from a set $\mathcal{X}$ to a set $\mathcal{Y}$ defined on a subset $\mathcal{S}\subseteq \mathcal{X}$ called its \emph{natural domain}. These are useful when we wish to speak loosely of a map from $\mathcal{X}$ to $\mathcal{Y}$ that may not be defined on all of $\mathcal{X}$, but whose natural domain may be difficult to specify a priori. A matrix summation method  falls under this situation as we want to define a map $\mathsf{R}$ on $s(\mathbb{C}^{d \times d})$ that is only well-defined on its natural domain of $\mathsf{R}$-summable series.  Following convention in algebraic geometry, we write $\mathsf{R} : \mathcal{X} \dashrightarrow \mathcal{Y}$ to indicate that $\mathsf{R}$ may be a partial function.

\section{Sequential summation methods}\label{sec:seq}

Many summation methods for scalar series are \emph{sequential summation method} \cite{hardy}. In this section we will extend them to matrix series, defining various partial functions that map a sequence $A_\bullet \in s(\mathbb{C}^{d \times d})$ into a suitably transformed sequence in $c(\mathbb{C}^{d \times d})$ and defining the corresponding sum\footnote{Sometimes called \emph{antilimit} for easy distinction from the partial sums \cite{hardy}.} as the limit of the transformed sequence.  

Let $C_{n,k}\in \mathbb{C}^{d\times d}$, $n,k\in \mathbb{N}$, and consider the partial function $\mathsf{R} : s(\mathbb{C}^{d \times d}) \dashrightarrow c(\mathbb{C}^{d \times d})$ given by
\begin{equation}\label{eq:sequential-transformation}
    \mathsf{R}(A_\bullet)_n = \sum_{k=0}^\infty C_{n,k}S_k
\end{equation}
for any $A_\bullet \in s(\mathbb{C}^{d \times d})$, $S_n = \sum_{k=0}^n A_k$, and $n \in \mathbb{N}$. The $\mathsf{R}$-sum is defined to be $\lim_{n \to \infty } \mathsf{R}(A_\bullet)_n$, if the limit exists; and in which case we write \eqref{eq:Rsum} with $S =\lim_{n \to \infty } \mathsf{R}(A_\bullet)_n$.

For a conventionally summable series, we expect our summation method to yield the same sum, i.e., $\sum_{k=0}^\infty A_k \overeq{R} S$ whenever $\sum_{k=0}^\infty A_k = S$. This property is called \emph{regularity}. All summation methods considered in our article will be shown to be regular. In fact they satisfy the slightly stricter but completely natural condition of \emph{total regularity}: If a series sums to $+ \infty$ conventionally, then the method also sums it to $+\infty$. Total regularity is the reason why the validity of \eqref{eq:neumann} cannot be extended to all of $\mathbb{C} \setminus\{1\}$: For $d=1$, $\sum_{k=0}^\infty x^k = +\infty$ whenever $x \in (1,+\infty)$ and so no totally regular method could ever yield $(1 - x)^{-1}$ for all $x \in \mathbb{C} \setminus\{1\}$. We will not discuss total regularity in the rest of this article.

We provide a sufficient condition for the regularity of sequential summation methods  \eqref{eq:sequential-transformation}, generalizing \cite[Theorem~1]{hardy} to matrices.
\begin{theorem}\label{thm:LinearCharacterization}
Let $C_{n,k}\in \mathbb{C}^{d\times d}$, $n,k\in \mathbb{N}$. Suppose
\begin{enumerate}[\normalfont(i)]
    \item there exists $\eta>0$ such that $\sum_{k=0}^\infty \norm{C_{n,k}}<\eta$ for each $n \in \mathbb{N}$; \label{cond:1} 
    \item $\lim_{n\to \infty}C_{n,k}= 0$ for each $k\in \mathbb{N}$; \label{cond:2}
    \item $\lim_{n\to \infty}\sum_{k=0}^\infty C_{n,k} = I$. \label{cond:3}
\end{enumerate}
Then for any $A_\bullet \in s(\mathbb{C}^{d \times d})$ with  $S_n = \sum_{k=0}^n A_k$ and $\lim_{n \to \infty} S_n =S$, the series $\sum_{k=0}^\infty C_{n,k}S_k$ is summable in the conventional sense for each $n\in \mathbb{N}$ and 
\begin{equation}\label{eq:sequential-convergence}
    \lim_{n\to\infty} \sum_{k=0}^\infty C_{n,k}S_k = S.
\end{equation}
\end{theorem}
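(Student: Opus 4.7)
The plan is to mimic the classical Toeplitz--Silverman proof of Hardy's Theorem~1, using submultiplicativity of the spectral norm to replace absolute values by matrix norms, and then exploit that $C_{n,k}$ multiplies $S_k$ on a fixed side (so noncommutativity never interferes). Throughout, let $S_n\to S$ and set $\varepsilon>0$.

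\textbf{Step 1: Convergence for each fixed $n$.} Since $S_k\to S$, the sequence $(S_k)$ is bounded, say $\|S_k\|\le M$. Condition~\ref{cond:1} then gives
\[
\sum_{k=0}^{\infty}\|C_{n,k}S_k\|\;\le\; M\sum_{k=0}^{\infty}\|C_{n,k}\|\;<\;M\eta,
\]
so the series $\sum_{k=0}^{\infty}C_{n,k}S_k$ is absolutely convergent for every $n$. This verifies the existence claim.

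\textbf{Step 2: The key decomposition.} I would write
\[
\sum_{k=0}^{\infty}C_{n,k}S_k \;-\; S \;=\; \sum_{k=0}^{\infty}C_{n,k}(S_k-S) \;+\; \Bigl(\sum_{k=0}^{\infty}C_{n,k}-I\Bigr)S,
\]
where the second term vanishes as $n\to\infty$ by hypothesis~\ref{cond:3}. It remains to control the first term. Pick $K$ so large that $\|S_k-S\|<\varepsilon/(2\eta)$ for all $k\ge K$. Then by submultiplicativity and hypothesis~\ref{cond:1},
\[
\Bigl\|\sum_{k=K}^{\infty}C_{n,k}(S_k-S)\Bigr\| \;\le\; \frac{\varepsilon}{2\eta}\sum_{k=K}^{\infty}\|C_{n,k}\| \;\le\; \frac{\varepsilon}{2},
\]
uniformly in $n$. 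For the remaining finite head $\sum_{k=0}^{K-1}C_{n,k}(S_k-S)$, hypothesis~\ref{cond:2} tells us each summand tends to $0$ as $n\to\infty$; since there are only $K$ terms, the head is $<\varepsilon/2$ for all sufficiently large $n$. Combining the head, tail, and the second term in the decomposition yields $\lim_{n\to\infty}\sum_{k=0}^{\infty}C_{n,k}S_k=S$.

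\textbf{Anticipated obstacle.} There is no deep obstacle here; the only subtlety is making sure the move from $\lvert\,\cdot\,\rvert$ to $\|\cdot\|$ in the scalar proof is legitimate. Submultiplicativity $\|C_{n,k}S_k\|\le\|C_{n,k}\|\,\|S_k\|$ is exactly what replaces $|c_{n,k}s_k|=|c_{n,k}|\,|s_k|$, and because $C_{n,k}$ always appears on the \emph{same} side of the $S_k$ in the decomposition, the argument goes through without any commutativity assumption. The one bookkeeping point worth flagging is in the second term of the decomposition: the partial sums $\sum_{k=0}^{N}C_{n,k}$ converge (for each $n$) by hypothesis~\ref{cond:1}, so the expression $\sum_{k=0}^{\infty}C_{n,k}-I$ is a bona fide matrix to which~\ref{cond:3} can be applied.
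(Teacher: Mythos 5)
Your proof is correct and is essentially the same argument as the paper's: your one-shot decomposition $\sum_k C_{n,k}S_k - S = \sum_k C_{n,k}(S_k-S) + \bigl(\sum_k C_{n,k}-I\bigr)S$ is precisely the paper's two-stage reduction (first prove the case $S=0$ via the head/tail split using \ref{cond:1} and \ref{cond:2}, then handle general $S$ by shifting $A_0\mapsto A_0-S$ and invoking \ref{cond:3}) collapsed into a single identity. The bookkeeping, including the use of submultiplicativity in place of $|c_{n,k}s_k|=|c_{n,k}||s_k|$ and the remark that each infinite sum in the decomposition is a bona fide matrix thanks to \ref{cond:1}, is all sound.
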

\begin{proof}
Since $S_\bullet = (S_k)_{k=0}^\infty$ is convergent and therefore bounded, $\norm{S_k} \le \beta$ for some  $\beta >0 $ and all $k\in \mathbb{N}$. It follows from \ref{cond:1} that for each $n \in \mathbb{N}$,
\[
    \sum_{k=0}^\infty \lVert C_{n,k}S_k \rVert \le \sum_{k=0}^\infty \lVert C_{n,k} \rVert \cdot \lVert S_k \rVert \le \beta \eta.
\]
Thus $\sum_{k=0}^\infty C_{n,k}S_k$ is summable. To show \eqref{eq:sequential-convergence}, first assume that $S=0$. For $\varepsilon>0$, choose $m\in \mathbb{N}$ sufficiently large so that $\norm{S_k} < \varepsilon/2 \eta$ for $k>m$. By \ref{cond:1} and \ref{cond:2},
\[
    \lim_{n\to \infty}\sum_{k=0}^m C_{n,k}S_k = 0 \qquad \text{and} \qquad \Bigl\|\sum_{k > m}C_{n,k}S_k \Bigr\| \le \frac{\varepsilon}{2 \eta}\sum_{k > m} \norm{C_{n,k}}\le \frac{\varepsilon}{2}. 
\]
Hence
\[
    \lim_{n\to\infty} \sum_{k=0}^\infty C_{n,k}S_k = \lim_{n\to \infty} \biggl( \sum_{k=0}^m C_{n,k}S_k +\sum_{k > m}C_{n,k}S_k \biggr) = 0.
\]
For $S\neq 0$, consider $A'_\bullet = (A'_k)_{k=0}^\infty$ with
\[
A_k' \coloneqq \begin{cases} A_0 -S & k = 0,\\ 
A_k & k=1,2, \dots,
\end{cases}
\]
with partial sums $S_k' = S_k-S$, $k \in \mathbb{N}$. Since $\lim_{k\to \infty} S_k' = 0$, we get $\lim_{n\to \infty}\sum_{k=0}^\infty C_{n,k}S_k' =0$.
By \ref{cond:3},
\[
    \lim_{n\to \infty}\sum_{k=0}^\infty C_{n,k} (S_k'+S) = \lim_{n\to \infty}\sum_{k=0}^\infty C_{n,k} S_k'+\lim_{n\to \infty}\sum_{k=0}^\infty C_{n,k}S = S.\qedhere
\]
\end{proof}
Theorem~\ref{thm:LinearCharacterization}  should be interpreted as follows: A summation method of the form \eqref{eq:sequential-transformation} satisfying \ref{cond:3} should be seen as taking matrix-weighted averages of the sequence of partial sums $S_\bullet$ for each $n$. Conditions \ref{cond:1} and \ref{cond:2} are what one would expect for weights: absolutely summable and not biased towards any partial sum $S_k$ respectively.  Theorem~\ref{thm:LinearCharacterization} would be useful for establishing regularity of matrix summation methods involving various choices of matrix weights.

\subsection{N\"{o}rlund means}\label{sec:norlund}

The scalar version of this summation method was first introduced in \cite{Woronoi} but named after N\"{o}rlund who rediscovered it \cite{norlund_original}. Its most notable use is for summing Fourier series \cite{Tamarkin,Shaheen_norlun}. Here we will extend it to series of matrices.
\begin{definition}
    Let $P_\bullet \coloneqq (P_k)_{k=0}^\infty$ be a sequence of positive definite matrices such that 
    \begin{equation}\label{eq:norlund}
        \lim_{k\to \infty} \lVert (P_0+\dots + P_k)^{-1} \rVert \lVert P_k \rVert = 0.
    \end{equation}
    For $A_\bullet = (A_k)_{k=0}^\infty \in s(\mathbb{C}^{d \times d})$, the series $\sum_{k=0}^\infty A_k$ is \emph{N\"{o}rlund summable} to $S\in \mathbb{C}^{d \times d}$ with respect to $P_\bullet$ if 
    \[
        \lim_{n\to \infty} (P_0+\dots + P_n)^{-1}(P_nS_0+P_{n-1}S_1+\dots+P_0S_n) = S,
    \]
    where $S_n = \sum_{k=0}^n A_k$, $n\in \mathbb{N}$. We denote this by
    \[
       \sum_{k=0}^\infty A_k \overeq{N} S
    \]
and call $S$ the N\"{o}rlund mean of $A_\bullet$. There is an implicit choice of $P_\bullet$ not reflected in the notation.
\end{definition}

\begin{corollary}[Regularity of N\"orlund mean]\label{cor:norlund}
    Let $P_\bullet \coloneqq (P_k)_{k=0}^\infty$ be a sequence of positive definite matrices satisfying \eqref{eq:norlund}. For $A_\bullet \in s(\mathbb{C}^{d \times d})$ and $S\in \mathbb{C}^{d \times d}$, if $\sum_{k=0}^\infty A_k =S$, then $\sum_{k=0}^\infty A_k \overeq{N} S$.
\end{corollary}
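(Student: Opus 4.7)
The plan is to apply Theorem~\ref{thm:LinearCharacterization}. Set $Q_n \coloneqq P_0+\cdots+P_n$ and define
\[
C_{n,k} \coloneqq \begin{cases} Q_n^{-1}P_{n-k}, & 0\le k\le n,\\ 0, & k>n,\end{cases}
\]
so that $\mathsf{N}(A_\bullet)_n = \sum_{k=0}^\infty C_{n,k}S_k$, matching the form~\eqref{eq:sequential-transformation}. The regularity claim then reduces to verifying hypotheses~\ref{cond:1}--\ref{cond:3} for this particular choice of $C_{n,k}$.

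Two of the three hypotheses will be nearly immediate. For~\ref{cond:3}, telescope: $\sum_{k=0}^\infty C_{n,k} = Q_n^{-1}\sum_{k=0}^n P_{n-k} = Q_n^{-1}Q_n = I$ for every $n$, so the limit is identically $I$. For~\ref{cond:2}, fix $k$; since every $P_j\succ 0$ we have $Q_n\succeq Q_{n-k}$ whenever $n\ge k$, so Loewner monotonicity of the matrix inverse gives $\lVert Q_n^{-1}\rVert\le \lVert Q_{n-k}^{-1}\rVert$, and submultiplicativity combined with~\eqref{eq:norlund} yields
\[
\lVert C_{n,k}\rVert\le \lVert Q_n^{-1}\rVert\lVert P_{n-k}\rVert\le \lVert Q_{n-k}^{-1}\rVert\lVert P_{n-k}\rVert\to 0 \quad\text{as }n\to\infty.
\]

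The substantive step is~\ref{cond:1}: one has to bound $\sum_{k=0}^n\lVert Q_n^{-1}P_{n-k}\rVert$ by a constant independent of $n$. In the scalar case this is automatic because the weights are non-negative scalars that already sum to~$1$, but in the matrix setting $Q_n^{-1}P_{n-k}$ is in general not Hermitian, so the identity $\sum_k C_{n,k}=I$ controls only the norm of the sum, not the sum of the norms. My plan is to work through the symmetrized weights $M_{n,k}\coloneqq Q_n^{-1/2}P_{n-k}Q_n^{-1/2}$, which are positive semidefinite, Loewner-dominated by $I$, and satisfy $\sum_{k=0}^n M_{n,k}=I$; this immediately gives the uniform trace-type bound $\sum_k\lVert M_{n,k}\rVert\le \sum_k\operatorname{tr}(M_{n,k})=d$. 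The technical heart of the argument will then be to transfer this control on the $M_{n,k}$ back to the $C_{n,k}$, for instance through the factorization $C_{n,k}=(Q_n^{-1}P_{n-k}^{1/2})\cdot P_{n-k}^{1/2}$ combined with an operator Cauchy--Schwarz estimate. I expect this transfer step to be the main obstacle, since a naive bound picks up a spurious factor involving the conditioning of $Q_n$ that~\eqref{eq:norlund} does not control.
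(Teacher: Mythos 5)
You take the same route as the paper: cast Nörlund means in the form~\eqref{eq:sequential-transformation} with $C_{n,k}=Q_n^{-1}P_{n-k}$ for $k\le n$ and check the three hypotheses of Theorem~\ref{thm:LinearCharacterization}. Your verification of Conditions~\ref{cond:2} and~\ref{cond:3} matches the published argument (Loewner monotonicity of the inverse for~\ref{cond:2}; $\sum_{k=0}^n C_{n,k}=I$ for~\ref{cond:3}). Where you part ways with the paper is Condition~\ref{cond:1}, and here your instinct is exactly right: the paper's proof is a one-liner that asserts both~\ref{cond:1} and~\ref{cond:3} follow from $\sum_{k=0}^n C_{n,k}=I$, but that identity only controls $\bigl\lVert\sum_k C_{n,k}\bigr\rVert$, not $\sum_k\lVert C_{n,k}\rVert$. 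In the scalar case the two coincide because the weights are nonnegative; here the $C_{n,k}$ are similar to, but in general not equal to, the positive semidefinite $M_{n,k}=Q_n^{-1/2}P_{n-k}Q_n^{-1/2}$, and the non-Hermiticity is precisely what breaks the free ride.

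Your proposed fix via symmetrization plus operator Cauchy--Schwarz will, as you anticipate, hit a wall. Any transfer from the $M_{n,k}$ to the $C_{n,k}$ through conjugation by $Q_n^{\pm1/2}$ picks up a factor of $\kappa(Q_n^{1/2})$ (or $\sqrt{\kappa(Q_n)}$ after Cauchy--Schwarz), and hypothesis~\eqref{eq:norlund} controls $\lVert Q_k^{-1}\rVert\lVert P_k\rVert$ but says nothing about $\kappa(Q_n)=\lVert Q_n\rVert\lVert Q_n^{-1}\rVert$, which can grow without bound. Indeed one can build $P_\bullet$ satisfying~\eqref{eq:norlund} (already in $d=2$, using near-rank-one $P_k$ with slowly shrinking off-axis components and geometrically lengthening blocks) for which $\sum_{k=0}^n\lVert C_{n,k}\rVert\to\infty$, so Condition~\ref{cond:1} genuinely fails and Theorem~\ref{thm:LinearCharacterization} cannot be invoked as stated. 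Your write-up therefore does not complete the proof, but it does correctly locate a real gap in the paper's argument: closing it requires either a strengthened hypothesis on $P_\bullet$ (e.g., uniform control of $\kappa(Q_n)$) or a direct estimate of $\bigl\lVert Q_n^{-1}\sum_{k>m}P_{n-k}(S_k-S)\bigr\rVert$ that avoids summing norms term by term.
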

\begin{proof}
The N\"{o}rlund mean is a sequential summation method with a choice of
\[
    C_{n,k} = 
    \begin{dcases}
        (P_0+\dots + P_n)^{-1}P_{n-k} & \text{if } k\leq n,\\
        0 & \text{if } k>n,
    \end{dcases}
\]
in  \eqref{eq:sequential-transformation}.
To show regularity, we check the three conditions of Theorem~\ref{thm:LinearCharacterization}: Since
\[
    \sum_{k=0}^n (P_0+\dots + P_n)^{-1}P_{n-k} = I,
\]
the Conditions~\ref{cond:1} and \ref{cond:3} are satisfied. Since $P_0+\dots + P_n \succ P_0 + \dots + P_{n-k}$, we have $ \lVert (P_0+\dots + P_n)^{-1} \rVert \le \lVert (P_0+\dots + P_{n-k})^{-1} \rVert$ and so Condition~\ref{cond:2} is satisfied as
\[
    \lim_{n \to \infty} \lVert (P_0+\dots + P_n)^{-1} \rVert \lVert P_{n-k}\rVert  \le \lim_{n \to \infty} \lVert (P_0+\dots + P_{n-k})^{-1} \rVert \lVert P_{n-k}\rVert = 0 .\qedhere
\]
\end{proof}
The well-known  Ces\`aro summation is a special case of N\"{o}rlund summation \cite[Section~5.13]{hardy} with $P_\bullet$ given by
\[
   P_k = \binom{k+j-1}{j-1}I, \quad  k\in \mathbb{N},
\]
for $j\in\mathbb{N}\setminus \{0\}$. We extend the definition to matrices and write $(\mathsf{C},j)$ for the $j$th order Ces\`aro summation. In particular, $(\mathsf{C},1)$ is Ces\`aro summation extended to a matrix series, defined formally below.
\begin{definition}
    Let $A_\bullet = (A_k)_{k=0}^\infty \in s(\mathbb{C}^{d \times d})$ and $S_\bullet$ be its sequence of partial sums, $S_n = \sum_{k=0}^n A_k$. Define
    \[
        \Sigma_n \coloneqq \frac{1}{n}\sum_{k=0}^{n-1} S_k.
    \]
    The series $\sum_{k=0}^\infty A_k$ is \emph{Ces\`aro summable} to $S \in \mathbb{C}^{d \times d}$ if $\lim_{n\to\infty}\Sigma_n = S$. We denote this by 
    \[
        \sum_{k=0}^\infty A_k \overeq{C} S
    \]
and call $S$ the Ces\`aro sum of $A_\bullet$.
\end{definition}

A standard example of a  Ces\`aro summable (scalar-valued) series divergent in the usual sense is the Grandi series $1-1+1-1+\cdots$, which sums to $1/2$ in the  Ces\`aro sense. Indeed, this is a special case of $\sum_{k=0}^\infty x^k \overeq{C} 1/(1-x)$ for any $x \in \overline{\mathbb{D}} \setminus \{1\}$, which follows from
\begin{equation}\label{eq:scalar-cesaro}
    \lim_{n\to \infty}\frac{1}{n}\sum_{m=0}^{n-1}\sum_{k=0}^m x^k = \lim_{n\to \infty}\sum_{m=0}^{n-1}\frac{1-x^{m+1}}{n(1-x)} = \frac{1}{1-x}.
\end{equation}
We establish the corresponding result for matrix Neumann series, which  is more involved.
\begin{proposition}[Ces\`aro summability of Neumann series]\label{prop:cesaro}
Let $X\in \mathbb{C}^{d\times d}$. Then
\begin{equation}\label{eq:neumann_cesaro}
    \sum_{k=0}^\infty X^k \overeq{C} (I-X)^{-1}
\end{equation} 
if and only if $\lambda(X)\subseteq \overline{\mathbb{D}} \setminus \{1\}$ and the geometric and algebraic multiplicities are equal for each eigenvalue in $\lambda(X)\cap \partial\mathbb{D} \setminus \{1\}$.
\end{proposition}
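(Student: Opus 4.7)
The plan is to pass to the Jordan canonical form of $X$. Write $X = PJP^{-1}$, where $J$ is a direct sum of Jordan blocks $J_{r_s}(\mu_s)$ with $\mu_s \in \lambda(X)$. Since conjugation by $P$ is a continuous linear automorphism of $\mathbb{C}^{d\times d}$ and block-diagonality is preserved by partial summation and Cesàro averaging, $\Sigma_n(X) = P\Sigma_n(J)P^{-1}$ and $\Sigma_n(J) = \bigoplus_s \Sigma_n(J_{r_s}(\mu_s))$. Hence \eqref{eq:neumann_cesaro} holds if and only if $\sum_k J_r(\mu)^k \overeq{C} (I_r - J_r(\mu))^{-1}$ for every Jordan block $J_r(\mu)$ of $X$; in particular $1 \notin \lambda(X)$, as is enforced by both (i) and (ii).

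For a single block, write $J_r(\mu) = \mu I + N$ with $N$ the standard nilpotent shift. The binomial theorem gives
\[
J_r(\mu)^k \;=\; \sum_{j=0}^{r-1} \binom{k}{j}\mu^{k-j} N^j,
\]
and because $I, N, \ldots, N^{r-1}$ are linearly independent in $\mathbb{C}^{r \times r}$, the Cesàro summability of $\sum_k J_r(\mu)^k$ decouples into the Cesàro summability of the $r$ scalar series $\sigma_j(\mu) \coloneqq \sum_{k \geq j}\binom{k}{j}\mu^{k-j}$ for $j = 0, 1, \ldots, r-1$. When these all are Cesàro summable, the matrix sum equals $\sum_{j=0}^{r-1}\sigma_j(\mu)N^j$; the negative binomial identity $\sigma_j(\mu) = (1 - \mu)^{-(j+1)}$ then identifies this expression with $(I_r - J_r(\mu))^{-1}$, so the sum value lands automatically on the right answer.

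The proof thus reduces to classifying, for each $(j,\mu) \in \mathbb{N} \times \mathbb{C}$, when $\sigma_j(\mu)$ is Cesàro summable. I would split into three cases: (a) $|\mu| < 1$: the series converges conventionally by geometric decay, hence is Cesàro summable to $(1-\mu)^{-(j+1)}$ by regularity (Corollary~\ref{cor:norlund}); (b) $|\mu| = 1$, $\mu \neq 1$, $j = 0$: the scalar computation \eqref{eq:scalar-cesaro} yields Cesàro summability to $1/(1-\mu)$; (c) the remaining cases---$\mu = 1$, or $|\mu| > 1$, or $|\mu| = 1$, $\mu \neq 1$, $j \geq 1$---where the Cesàro mean fails to converge. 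The first two subcases of (c) follow at once because the partial sums grow unboundedly in magnitude. The delicate subcase is the third: using $\binom{k}{j}\mu^{k-j} = \tfrac{1}{j!}\tfrac{d^j}{d\mu^j}\mu^k$, write
\[
T_m \;\coloneqq\; \sum_{k=j}^m \binom{k}{j}\mu^{k-j} \;=\; \tfrac{1}{j!}\tfrac{d^j}{d\mu^j}\bigl[(1-\mu^{m+1})/(1-\mu)\bigr];
\]
a Leibniz expansion shows $T_m$ has a leading oscillatory term of order $m^j$ whose Cesàro average contains a non-convergent factor of the form $\mu^n$ multiplied by a polynomial in $n$, ruling out Cesàro convergence.

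Combining the cases: a Jordan block $J_r(\mu)$ is Cesàro summable to $(I_r - J_r(\mu))^{-1}$ if and only if either $|\mu| < 1$ (handling all $j = 0, \ldots, r-1$ via (a)) or $|\mu| = 1$, $\mu \neq 1$, and $r = 1$ (via (b)). Assembled over all Jordan blocks of $X$, this recovers exactly conditions (i) and (ii). The main obstacle is the subcase $|\mu|=1$, $\mu\neq 1$, $j\geq 1$ in step (c): the partial sums oscillate with only $O(m^j)$ amplitude, so the Cesàro means are themselves small, and one must isolate the leading oscillatory term carefully to confirm that it fails to settle to a limit. Everything else is either a direct computation, an invocation of regularity, or the classical scalar identity \eqref{eq:scalar-cesaro}.
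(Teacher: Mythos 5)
Your proof takes essentially the same route as the paper's: reduce via Jordan form to a single block, use the scalar identity \eqref{eq:scalar-cesaro} and regularity for the summable cases, and establish divergence by examining the nilpotent part of a boundary eigenvalue's block of size $\geq 2$. Your explicit decoupling $J_r(\mu)^k = \sum_{j} \binom{k}{j}\mu^{k-j}N^j$ with linear independence of the $N^j$ makes the reduction to the scalar Cesàro sums $\sigma_j(\mu)$ transparent; note though that for divergence it suffices to analyze $\sigma_1(\mu)$ alone, which is exactly the $(1,2)$ entry the paper's proof singles out, so your classification of all $j \geq 1$ is more than is strictly needed.
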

\begin{proof}
We start with the backward implication. The assumption falls into two cases:
\begin{enumerate}[\normalfont(i)]
    \item $\lambda(X)\subseteq  \mathbb{D}$,
    \item $\lambda(X)\cap \partial\mathbb{D} \setminus \{1\} \neq \varnothing$ and the geometric and algebraic multiplicities are equal for each eigenvalue in $\lambda(X)\cap \partial\mathbb{D} \setminus \{1\}$.
\end{enumerate}
For the first case, we have $\sum_{k=0}^\infty X^k = (I-X)^{-1}$ and so \eqref{eq:neumann_cesaro} holds by Corollary~\ref{cor:norlund} since Ces\`aro summation is a special case of N\"orlund  summation. For the second case, let $J_1,\dots,J_r$ be a list of all Jordan blocks of $X$ corresponding to eigenvalues $\lambda_1, \dots,\lambda_j \in \partial\mathbb{D} \setminus \{1\}$  and $\lambda_{j+1},\dots \lambda_r \in \mathbb{D}$. By assumption, $\lambda_1, \dots,\lambda_j $ have equal geometric and algebraic multiplicities and thus the Jordan blocks $J_1,\dots,J_j$ are all $1 \times 1$. Hence the Jordan decomposition $X=WJW^{-1}$ has the form
\[
J =  \begin{bmatrix}
     \lambda_1\\
     &\ddots\\
     & &\lambda_j \\
     & & &J_{j+1}\\
     & & & &\ddots\\
     & & & & &J_r
\end{bmatrix},
\]
where $J_i \in \mathbb{C}^{d_i \times d_i}$, $i=j+1,\dots, r$. By \eqref{eq:scalar-cesaro}, 
\begin{alignat*}{3}
    \sum_{k=0}^\infty \lambda_i^k &\overeq{C} \frac{1}{1-\lambda_i} &\text{for } i&=1,\dots,j.
\intertext{By the first case,}
    \sum_{k=0}^\infty J_i^k &\overeq{C} (1-J_i)^{-1}  \quad &\text{for } i&=j+1,\dots,r.
\end{alignat*}
Therefore,
\[
    \sum_{k=0}^\infty \begin{bmatrix}
    \lambda_1^k\\
     &\ddots\\
     & &\lambda_j^k\\
     & & &J_{j+1}^k\\
     & & & &\ddots\\
     & & & & &J_r^k
\end{bmatrix}
\overeq{C} \begin{bmatrix}
    \frac{1}{1-\lambda_1}\\
     &\ddots\\
     & &\frac{1}{1-\lambda_j}\\
     & & &(I-J_{j+1})^{-1}\\
     & & & &\ddots\\
     & & & & &(I-J_r)^{-1}
\end{bmatrix},
\]
so
\[
    \sum_{k=0}^\infty X^k = \sum_{k=0}^\infty W J^k W^{-1} \overeq{C} (I-X)^{-1}.
\]

We next establish the forward implication. Suppose $X$ has spectral radius $\rho(X)>1$. As
\[
    \rho(X) = \lim_{k\to \infty} \lVert X^k \rVert^{\frac{1}{k}},
\]
for any $\epsilon < \rho(X)-1$, there is some $m\in \mathbb{N}$ such that $(\rho(X)-\epsilon)^k \le \lVert X^k \rVert$ for all $k>m$. In other words, $\lVert X^k \rVert$ grows exponentially and thus
\[
\lim_{n\to \infty}\lVert \Sigma_n \rVert = \lim_{n\to \infty} \Bigl \lVert \frac{1}{n}\sum_{m=0}^{n-1}\sum_{k=0}^{m}X^k \Bigr\rVert = \infty.
\]
Also, observe that 
\[
    \frac{1}{n}\sum_{m=0}^{n-1}\sum_{k=0}^m 1 = \frac{1}{n}\sum_{m=0}^{n-1}m = \frac{n-1}{2};
\]
so if $1$ is an eigenvalue of $X$, then its Neumann series cannot be Ces\`aro summable. Hence we must have $\lambda(X)\subseteq \overline{\mathbb{D}} \setminus \{1\}$. It remains to rule out the case where $X$ has a Jordan block of size greater than $1 \times 1$ for an eigenvalue in $\partial\mathbb{D} \setminus \{1\}$. Suppose $X$ has a  Jordan block $J_\lambda \in \mathbb{C}^{d_i \times d_i}$ with $d_i \ge 2$ and corresponding eigenvalue $\lambda \in \partial\mathbb{D} \setminus \{1\}$. Dropping the subscript $i$ to avoid clutter, we have
\begin{equation}\label{eq:Jordan}
    J_\lambda^k = 
    \begin{bmatrix*}[c]
        \lambda^k &\binom{k}{1}\lambda^{k-1}  &\binom{k}{2}\lambda^{k-2} &\ldots &\binom{k}{d-1} \lambda^{k-(d-1)}\\
         &\lambda^k &\binom{k}{1}\lambda^{k-1}   &\ldots &\binom{k}{d-2} \lambda^{k-(d-2)}\\
         & &\ddots &\ddots &\vdots\\
         & & &\lambda^k &\binom{k}{1}\lambda^{k-1}\\
         & & & &\lambda^k
    \end{bmatrix*} \qquad\text{for } k > d.
\end{equation}
Observe that the $(1,2)$th entry,
\[
    \biggl( \frac{1}{n} \sum_{m=0}^{n-1} \sum_{k=0}^m J_\lambda^k  \biggr)_{12} = \frac{1}{n}\sum_{m=0}^{n-1} \sum_{k=0}^m k\lambda^{k-1} = \sum_{k=0}^{n-1} \frac{(k-1)\lambda^k -k\lambda^{k-1}+1}{n(1-\lambda)^2}
\]
is divergent as $n\to \infty$. So the series $\sum_{k=0}^\infty J_\lambda^k$ is not Ces\`aro summable and neither is the Neumann series of $X$.
\end{proof}
The proof above shows that whenever there is a Jordan block of size greater than $1$ with eigenvalues on $\partial\mathbb{D} \setminus \{1\}$, Ces\`aro summation will fail to sum the Neumann series. In Section~\ref{sec:abelian}, we will see how we may  overcome this difficulty with Abel summation.

The best-known application of the scalar Ces\`aro summation is from Fourier Analysis \cite{Katznelson,tauberian_theory,young_1988}. It is well known that if $f \in L^2(-\pi,\pi)$, then its Fourier series
\[
  s_n(x) \coloneqq \sum_{k=-n}^n\widehat{f}(k) e^{ikx}, \qquad \widehat{f}(k) \coloneqq \frac{1}{2\pi}\int_{-\pi}^\pi f(x)e^{-ikx}\, dx,
\]
converges to $f$ in the $L^2$-norm, i.e., $\lim_{n\to \infty} \lVert s_n - f \rVert_2 = 0$. Fej\'{e}r's theorem \cite{fejer1903} gives the $L^\infty$-norm analogue for continuous functions with one caveat---the series has to be taken in the Ces\`aro sense: If $f \in C(-\pi, \pi)$, then
\[
    \sigma_n(x) \coloneqq  \frac{1}{n}\sum_{m=0}^{n-1}\sum_{k=-m}^m \widehat{f}(k) e^{ikx},
\]
converges uniformly to $f$, i.e., $\lim_{n\to \infty} \lVert \sigma_n - f \rVert_\infty = 0$.

A well-known consequence of Fej\'{e}r's theorem is that if $f\in L^2(-\pi,\pi)$ is continuous at $x\in (-\pi,\pi)$, then its Ces\`aro sum converges pointwise to $f(x)$ \cite{Katznelson,young_1988}. As an application of our notion of Ces\`aro summability for matrices, we extend Fej\'{e}r's theorem  to arbitrary matrices $X \in \mathbb{C}^{d \times d}$ with real eigenvalues and $2\pi$-periodic functions $f \in C^{d-1}(\mathbb{R})$. We emphasize that we do not require diagonalizability of $X$. While we have assumed that $f$ is $(d-1)$-times differentiable for simplicity, it will be evident from the proof that the result holds for any $f \in C(\mathbb{R})$ that is $(d_\lambda -1)$-times differentiable at each $\lambda \in \lambda(X)$ where $d_\lambda$ is the size of the largest Jordan block corresponding to $\lambda$. The exponential function, when applied to a matrix argument, refers to the matrix exponential \cite[Section~10.8]{funcofmat}. 
\begin{proposition}[Fej\'{e}r's theorem for matrix Fourier series]\label{prop:sylvester_Cesaro}
Let $X\in \mathbb{C}^{d\times d}$ have all eigenvalues real. Let $f \in C^{d-1}(\mathbb{R})$ be $2\pi$-periodic. Then
\begin{equation}\label{eq:sylvester_Cesaro}
        \lim_{n\to \infty}\frac{1}{n}\sum_{m=0}^{n-1}\sum_{k=-m}^m \widehat{f}(k) e^{ikX} = f(X).
\end{equation}
\end{proposition}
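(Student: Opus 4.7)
The plan is to reduce the identity to the case of a single Jordan block and then deduce it, entry by entry, from the classical (scalar) Fej\'er theorem applied to successive derivatives of $f$. Write $X = W J W^{-1}$ for a Jordan normal form $J = \diag(J_{\lambda_1},\dots,J_{\lambda_r})$ with blocks $J_{\lambda_i} \in \mathbb{C}^{s_i \times s_i}$ and $\lambda_i \in \mathbb{R}$. Since $e^{ikX} = W e^{ikJ} W^{-1}$ and $e^{ikJ}$ is block-diagonal with blocks $e^{ikJ_{\lambda_i}}$, and since both Ces\'aro summation and the matrix function $f(X) = W f(J) W^{-1}$ commute with this similarity and with the block decomposition, it suffices to prove
\[
\lim_{n\to\infty} \frac{1}{n}\sum_{m=0}^{n-1}\sum_{k=-m}^m \widehat{f}(k)\, e^{ikJ_\lambda} = f(J_\lambda)
\]
for a single block $J_\lambda$ of size $s \le d$ with $\lambda \in \mathbb{R}$.

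Writing $J_\lambda = \lambda I + N$ with $N$ the nilpotent shift ($N^s = 0$), and using that $\lambda I$ and $N$ commute, I would expand
\[
e^{ikJ_\lambda} = e^{ik\lambda} \sum_{j=0}^{s-1} \frac{(ik)^j}{j!}\, N^j.
\]
Since $N^j$ has ones on its $j$th superdiagonal and zeros elsewhere, the $(p,p+j)$ entry of $e^{ikJ_\lambda}$ equals $\frac{(ik)^j}{j!} e^{ik\lambda}$ for $0 \le j \le s-1$, and all entries below the diagonal vanish. Thus the $(p,p+j)$ entry of the matrix Ces\'aro partial sum equals
\[
\frac{1}{j!}\cdot \frac{1}{n}\sum_{m=0}^{n-1}\sum_{k=-m}^m (ik)^j \widehat{f}(k)\, e^{ik\lambda}.
\]

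The key observation now is that integration by parts for the $2\pi$-periodic function $f \in C^{d-1}(\mathbb{R})$ yields $(ik)^j \widehat{f}(k) = \widehat{f^{(j)}}(k)$ for $0 \le j \le d-1$, and in particular for $j \le s-1$. Hence the displayed expression is exactly $\frac{1}{j!}$ times the scalar Ces\'aro Fourier partial sum of $f^{(j)}$ at $x = \lambda$. Since $f^{(j)} \in C^{d-1-j}(\mathbb{R})$ is continuous and $2\pi$-periodic, the classical Fej\'er theorem (stated immediately before the proposition, applied modulo $2\pi$ so that $\lambda$ may be taken in $(-\pi,\pi]$) gives convergence to $\frac{1}{j!} f^{(j)}(\lambda)$. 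This is precisely the $(p,p+j)$ entry of $f(J_\lambda)$ in the standard Jordan definition of matrix functions, so the entry-wise limits assemble into $f(J_\lambda)$ and the proposition follows after undoing the similarity.

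The main substantive step is the entry-wise identification: verifying the structure of $e^{ikJ_\lambda}$ and matching it against the classical formula for $f(J_\lambda)$. The regularity hypothesis $f \in C^{d-1}$ is used exactly to justify $(ik)^j\widehat{f}(k) = \widehat{f^{(j)}}(k)$ for $j$ up to the maximal Jordan block size minus one, and the reality of $\lambda(X)$ is used so that $\lambda$ lies in the domain of $f$ and $f^{(j)}(\lambda)$ is defined; no diagonalizability is required.
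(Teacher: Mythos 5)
Your proposal is correct and takes essentially the same route as the paper: reduce to a single Jordan block by similarity, expand $e^{ikJ_\lambda}$ to identify each superdiagonal entry with a Ces\'aro mean of $(ik)^j\widehat{f}(k)e^{ik\lambda}$, apply the scalar Fej\'er theorem entrywise, and match the resulting limits against the Jordan formula for $f(J_\lambda)$. Your explicit invocation of $(ik)^j\widehat{f}(k)=\widehat{f^{(j)}}(k)$ is a slightly more transparent justification than the paper's terse appeal to ``the standard Fej\'er's theorem'' for $\sigma_n^{(j)}(\lambda)\to f^{(j)}(\lambda)$, but the substance is identical.
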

\begin{proof}
By the standard Fej\'er's theorem, for $j\leq d - 1$,
\[
    \lim_{n\to \infty}\sigma_n^{(j)}(\lambda) = \lim_{n\to \infty}\frac{1}{n}\sum_{m=0}^{n-1}\sum_{k=-m}^m (ik)^j\widehat{f}(k) e^{ik\lambda} = f^{(j)}(\lambda)
\]
for any $\lambda \in  \mathbb{R}$ and where the parenthetical superscripts denote $j$th derivative. For a Jordan block $J  \in \mathbb{C}^{d \times d}$ with eigenvalue $\lambda \in \mathbb{R}$,
\[
    f(J) = 
    \begin{bmatrix*}[c]
        f(\lambda) &f'(\lambda)  &\frac{f''(\lambda)}{2!} &\ldots &\frac{f^{(d-1)}(\lambda)}{(d-1)!}\\
         &f(\lambda) &f'(\lambda)   &\ldots &\frac{f^{(d-2)}(\lambda)}{(d-2)!}\\
         & &\ddots &\ddots &\vdots\\
         & & &f(\lambda) &f'(\lambda)\\
         & & & &f(\lambda)
    \end{bmatrix*}
\]
and so
\begin{align*}
    \lim_{n\to \infty}\frac{1}{n}\sum_{m=0}^{n-1}\sum_{k=-m}^m \widehat{f}(k) e^{ikJ} &= \lim_{n\to \infty}\frac{1}{n}\sum_{m=0}^{n-1}\sum_{k=-m}^m \widehat{f}(k)\begin{bmatrix*}[c]
        e^{ik\lambda} &ik e^{ik\lambda}  &\frac{(ik)^2 e^{ik\lambda}}{2!} &\ldots &\frac{(ik)^{d-1} e^{ik\lambda}}{(d-1)!}\\
         &e^{ik\lambda} &ik e^{ik\lambda}   &\ldots &\frac{(ik)^{d-2} e^{ik\lambda}}{(d-2)!}\\
         & &\ddots &\ddots &\vdots\\
         & & &e^{ik\lambda} &ik e^{ik\lambda}\\
         & & & &e^{ik\lambda}
    \end{bmatrix*}\\
    & =
    \begin{bmatrix*}[c]
        f(\lambda) &f'(\lambda)  &\frac{f''(\lambda)}{2!} &\ldots &\frac{f^{(d-1)}(\lambda)}{(d-1)!}\\
         &f(\lambda) &f'(\lambda)   &\ldots &\frac{f^{(d-2)}(\lambda)}{(d-2)!}\\
         & &\ddots &\ddots &\vdots\\
         & & &f(\lambda) &f'(\lambda)\\
         & & & &f(\lambda)
    \end{bmatrix*} =f(J).
\end{align*}
Now let $X = W\diag(J_1,\dots, J_r) W^{-1}$ be its Jordan decomposition with Jordan blocks $J_1,\dots,J_r$. Then
\begin{align*}
    \lim_{n\to \infty}\frac{1}{n}\sum_{m=0}^{n-1}\sum_{k=-m}^m \widehat{f}(k) e^{ikX} &= \lim_{n\to \infty}\frac{1}{n}\sum_{m=0}^{n-1}\sum_{k=-m}^m \widehat{f}(k) T\begin{bmatrix*}[c]
    e^{ikJ_1}\\ &\ddots\\ & &e^{ikJ_r}
\end{bmatrix*}T^{-1} \\
&= T\begin{bmatrix*}[c]
    f(J_1)\\ &\ddots\\ & &f(J_r) 
\end{bmatrix*}T^{-1}  = f(X). \qedhere
\end{align*}
\end{proof}
Proposition~\ref{prop:sylvester_Cesaro} provides a way to remedy the Gibbs phenomenon for matrix Fourier series, which we will illustrate numerically in Section~\ref{sec:gibbs}.

Before moving to our next method, we would like to point out that what may appear to be an innocuous change to a series could affect the value obtained using the summation methods in this article.  For example, if we had added zeros to every third term of the Grandi's series $1 - 1 + 1 - 1 + \dotsb$ to obtain the series $1-1+0+1-1+0+\dotsb$, its Ces\`aro sum decreases from $1/2$ to $1/3$.

\subsection{Euler method}\label{sec:euler}

Euler summation methods are another class of sequential summation methods. Its name comes from the $(\mathsf{E},1)$-method for scalar series, which involves the Euler transform \cite{tauberian_theory,hardy}. Here we will extend Euler transform and Euler  summation to matrices. Let $P \in \mathbb{C}^{d \times d}$ be a positive definite matrix. Emulating the calculation in \cite[Section~8.2]{hardy}, for $A_\bullet = (A_k)_{k=0}^\infty \in s(\mathbb{C}^{d \times d})$,
\[
\sum_{k=0}^\infty A_k = \sum_{k=0}^\infty \bigl((I+P)^{-1}[I-P(I+P)^{-1}]^{-1}\bigr)^{k+1} A_k =  \sum_{n=0}^\infty \sum_{k=0}^n \binom{n}{k} (I+P)^{-n-1} P^{n-k} A_k
\]
and thus we introduce the shorthand
\begin{equation}\label{eq:Aq}
    \E_n^P(A_\bullet) \coloneqq \sum_{k=0}^n \binom{n}{k} (I+P)^{-n-1} P^{n-k} A_k,
\end{equation}
and call it  the $P$-\emph{Euler transform} of $\sum_{k=0}^\infty A_k$.
\begin{definition}
    For $P \succ 0, A_\bullet \in s(\mathbb{C}^{d \times d})$, the matrix series $\sum_{k=0}^\infty A_k$ is \emph{Euler summable} to $S \in \mathbb{C}^{d \times d}$ with respect to $P$ or \emph{$(\mathsf{E},P)$-summable} to $S$ if 
    \[
        \sum_{n=0}^\infty \E_n^P(A_\bullet) =S.
    \] 
    We denote this by
    \[
        \sum_{k=0}^\infty A_k \doubleovereq{E}{P} S.
    \]
For the special case $P = \rho I$ where $\rho>0$ is a scalar, we just write $(\mathsf{E},\rho)$ instead of $(\mathsf{E},\rho I)$.
\end{definition}
Let $P \succ 0$. Then
\begin{align*}
    (I+P)^{n+1}\sum_{k=0}^n \E_k^P(A_\bullet) &= (I+P)^{n+1}\sum_{m=0}^n \E_m^P(A_\bullet) = \sum_{m=0}^n (I+P)^{n-m}\sum_{r=0}^m\binom{m}{r}P^{m-r}A_r \\
    &= \sum_{k=0}^n \sum_{r=0}^k \sum_{m=0}^n \binom{n-m}{k-r} \binom{m}{r} P^{n-k}A_r\\
    &= \sum_{k=0}^n \sum_{r=0}^k\binom{n+1}{k+1}P^{n-k} A_r = \sum_{k=0}^n \binom{n+1}{k+1} P^{n-k}S_k.
\end{align*}
Here we have used the Chu--Vandermonde's identity \cite{chu-vandermonde}: For any integers $0 \le r \le k \le n$,
\[
    \sum_{m=0}^n \binom{n-m}{k-r} \binom{m}{r} = \binom{n+1}{k+1}.
\]
The Euler method is thus a sequential summation method \eqref{eq:sequential-transformation} with a choice of 
\[
    C_{n,k} =
    \begin{dcases}
        \binom{n+1}{k+1} P^{n-k}(I+P)^{-n-1} & \text{if } k\leq n,\\[1ex]
        0 & \text{if } k>n.
    \end{dcases}
\]
\begin{corollary}[Regularity of Euler summation]\label{cor:regEu}
    For $A_\bullet \in s(\mathbb{C}^{d \times d})$ and $P,S\in \mathbb{C}^{d \times d}$ such that $P \succ 0$, if $\sum_{k=0}^\infty A_k = S$, then $\sum_{k=0}^\infty A_k \doubleovereq{E}{P} S$.
\end{corollary}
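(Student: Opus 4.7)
The plan is to apply Theorem~\ref{thm:LinearCharacterization} to the coefficient matrices $C_{n,k} = \binom{n+1}{k+1} P^{n-k}(I+P)^{-n-1}$ (for $k\le n$, zero otherwise) that were already identified in the derivation immediately preceding the statement. Since Euler summation has been shown there to be a sequential method of the form \eqref{eq:sequential-transformation} with these $C_{n,k}$, regularity reduces to verifying the three hypotheses \ref{cond:1}--\ref{cond:3} of Theorem~\ref{thm:LinearCharacterization}.

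For condition \ref{cond:3}, I would use the elementary binomial identity $\sum_{k=0}^n \binom{n+1}{k+1} P^{n-k} = (I+P)^{n+1} - P^{n+1}$ (valid since $P$ commutes with $I$) to obtain
\[
\sum_{k=0}^n C_{n,k} = I - \bigl[P(I+P)^{-1}\bigr]^{n+1}.
\]
Because $P \succ 0$ is Hermitian with positive eigenvalues $\lambda_1, \dots, \lambda_d > 0$, the matrix $P(I+P)^{-1}$ is Hermitian with eigenvalues $\lambda_i/(1+\lambda_i) \in (0,1)$; hence its spectral radius is strictly less than $1$, so $[P(I+P)^{-1}]^{n+1} \to 0$ as $n \to \infty$. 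This yields \ref{cond:3}. Condition \ref{cond:2} then follows from the factorization
\[
C_{n,k} = \binom{n+1}{k+1} \bigl[P(I+P)^{-1}\bigr]^{n-k} (I+P)^{-k-1},
\]
since for fixed $k$ the exponential decay of $[P(I+P)^{-1}]^{n-k}$ dominates the polynomial growth $\binom{n+1}{k+1} = O(n^{k+1})$.

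The main obstacle is condition~\ref{cond:1}: the naive submultiplicative bound $\|C_{n,k}\| \le \binom{n+1}{k+1}\|P\|^{n-k}\|(I+P)^{-1}\|^{n+1}$ leads to a sum $\|(I+P)^{-1}\|^{n+1}[(1+\|P\|)^{n+1} - \|P\|^{n+1}]$, and since $\|(I+P)^{-1}\|(1+\|P\|) = (1+\lambda_{\max})/(1+\lambda_{\min}) \ge 1$ in general, this need not be uniformly bounded in $n$. To circumvent this I would exploit that $C_{n,k}$ is a polynomial in $P$ and hence simultaneously diagonalizable with $P$: writing $P = U\diag(\lambda_1,\dots,\lambda_d)U^*$, one has $C_{n,k} = U\diag(c_{n,k}^{(1)},\dots,c_{n,k}^{(d)})U^*$ with
\[
c_{n,k}^{(i)} = \binom{n+1}{k+1}\lambda_i^{n-k}(1+\lambda_i)^{-n-1} \ge 0.
\]
Since each $C_{n,k}$ is then Hermitian positive semidefinite, $\|C_{n,k}\| = \max_i c_{n,k}^{(i)} \le \sum_{i=1}^d c_{n,k}^{(i)}$, and applying the scalar identity above for each eigenvalue gives
\[
\sum_{k=0}^n \|C_{n,k}\| \le \sum_{i=1}^d \sum_{k=0}^n c_{n,k}^{(i)} = \sum_{i=1}^d \Bigl[1 - \bigl(\tfrac{\lambda_i}{1+\lambda_i}\bigr)^{n+1}\Bigr] \le d.
\]
This establishes \ref{cond:1} with $\eta = d$, completing the verification and hence the corollary.
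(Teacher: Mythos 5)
Your proof is correct and takes essentially the same route as the paper: identify the Euler transform as the sequential method with $C_{n,k} = \binom{n+1}{k+1}P^{n-k}(I+P)^{-n-1}$, compute $\sum_{k=0}^n C_{n,k} = I - [P(I+P)^{-1}]^{n+1}$, and invoke Theorem~\ref{thm:LinearCharacterization}. Where you go beyond the paper is in explicitly justifying Condition~\ref{cond:1}: the paper asserts that $\sum_k C_{n,k} \prec I$ settles both \ref{cond:1} and \ref{cond:3} in one line, leaving implicit the step from a Loewner bound on the operator sum to a bound on $\sum_k \|C_{n,k}\|$. You correctly identify that the naive submultiplicative estimate fails, and then supply the missing link --- each $C_{n,k}$ is Hermitian positive semidefinite and simultaneously diagonalizable with $P$, so $\|C_{n,k}\| \le \operatorname{tr}(C_{n,k})$, whence $\sum_k \|C_{n,k}\| \le \operatorname{tr}\bigl(\sum_k C_{n,k}\bigr) \le d$. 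This is exactly the right argument, and your exposition makes explicit a detail the paper treats tersely.
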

\begin{proof}
This follows directly from Theorem~\ref{thm:LinearCharacterization}, where the three conditions may be verified as follows: Since
\[
    \sum_{k=0}^\infty \binom{n+1}{k+1} P^{n-k}(1+P)^{-n-1} = I- P^{n+1}(I+P)^{-n-1} \prec I
\]
and $\lim_{n\to \infty} I- P^{n+1}(I+P)^{-n-1} = I$, Conditions~\ref{cond:1} and \ref{cond:3} hold. Condition~\ref{cond:2} follows from
\[
    \lim_{n \to \infty} \binom{n+1}{k+1} P^{n-k}(I+P)^{-n-1} = 0. \qedhere
\]
\end{proof}
Euler summability depends highly on the choice of $P \succ 0$. The next result partially characterizes it for commuting $P_1$ and $P_2$ via Loewner order. 
\begin{theorem}
    Let $A_\bullet \in s(\mathbb{C}^{d \times d})$ and $P_1, P_2 \in \mathbb{C}^{d \times d}$  be such that  $P_2 \succ P_1 \succ 0$ and $P_1P_2 = P_2P_1$. If $\sum_{k=0}^\infty A_k \doubleovereq{E}{P_1} S$, then $\sum_{k=0}^\infty A_k \doubleovereq{E}{P_2} S$.
\end{theorem}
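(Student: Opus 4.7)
My plan is to reduce to the regularity of Euler summation (Corollary~\ref{cor:regEu}) by exhibiting $(\mathsf{E}, P_2)$ as the composition of $(\mathsf{E}, P_1)$ with a third Euler method, in direct analogy with the scalar iteration identity $(\mathsf{E}, q_1) \circ (\mathsf{E}, r) = (\mathsf{E}, q_1 + r + q_1 r)$. Concretely, I would set
\[
Q := (P_2 - P_1)(I + P_1)^{-1}
\]
and show that $\E_m^Q(\E_\bullet^{P_1}(A_\bullet)) = \E_m^{P_2}(A_\bullet)$ for every $m \in \mathbb{N}$. The first thing to check is that $Q$ is an admissible parameter: since $P_1$ commutes with $P_2$ it commutes with both $P_2 - P_1$ and $(I + P_1)^{-1}$, so $Q$ is a product of two commuting Hermitian positive definite matrices and hence $Q \succ 0$. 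The same commutation gives $Q P_1 = P_1 Q$ and
\[
(I+Q)(I+P_1) = I + P_1 + Q(I+P_1) = I + P_1 + (P_2 - P_1) = I + P_2,
\]
so in particular $Q$ also commutes with $P_2$.

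The main step is the iteration identity. Setting $B_n := \E_n^{P_1}(A_\bullet)$ and expanding,
\[
\E_m^Q(B_\bullet) = \sum_{n=0}^m \binom{m}{n}(I+Q)^{-m-1} Q^{m-n} \sum_{k=0}^n \binom{n}{k}(I+P_1)^{-n-1} P_1^{n-k} A_k.
\]
I would swap the order of summation, pull out $(I+Q)^{-m-1}$ and $(I+P_1)^{-k-1}$, and apply the matrix Chu--Vandermonde identity (already used in the excerpt just before Corollary~\ref{cor:regEu}) together with the binomial theorem to collapse the inner sum over $n$ to $\bigl(Q + (I+P_1)^{-1}P_1\bigr)^{m-k}$; here $P_1 Q = Q P_1$ is exactly what makes every binomial expansion legitimate. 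Using $Q + (I+P_1)^{-1} P_1 = \bigl(Q(I+P_1) + P_1\bigr)(I+P_1)^{-1} = P_2(I+P_1)^{-1}$ and $(I+Q)(I+P_1) = I + P_2$, the remaining product of matrix factors telescopes to $(I+P_2)^{-m-1} P_2^{m-k}$, yielding
\[
\E_m^Q(B_\bullet) = \sum_{k=0}^m \binom{m}{k}(I+P_2)^{-m-1} P_2^{m-k} A_k = \E_m^{P_2}(A_\bullet).
\]

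The conclusion is then immediate: by the $(\mathsf{E}, P_1)$-summability hypothesis, $\sum_{n=0}^\infty B_n = S$ in the conventional sense; since $Q \succ 0$, Corollary~\ref{cor:regEu} gives $\sum_{m=0}^\infty \E_m^Q(B_\bullet) = S$, and by the iteration identity this is $\sum_{m=0}^\infty \E_m^{P_2}(A_\bullet) = S$, i.e., $\sum_{k=0}^\infty A_k \doubleovereq{E}{P_2} S$. The only nontrivial step is the iteration identity, and there the main obstacle is bookkeeping: one must keep track of noncommutative reorderings and verify that every move (pulling $(I+Q)^{-m-1}$ past $(I+P_1)^{-n-1}$, expanding $\bigl(Q + (I+P_1)^{-1}P_1\bigr)^{m-k}$ by the binomial theorem, and multiplying $(I+Q)^{-m-1}(I+P_1)^{-m-1} = (I+P_2)^{-m-1}$) is justified by the single hypothesis $P_1 P_2 = P_2 P_1$.
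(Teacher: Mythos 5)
Your proposal is correct and takes essentially the same route as the paper: you pick the same auxiliary parameter $Q=(P_2-P_1)(I+P_1)^{-1}$, establish the composition identity $\E_m^Q(\E_\bullet^{P_1}(A_\bullet))=\E_m^{P_2}(A_\bullet)$ (the paper's equation~\eqref{eq:transforms-eq} with $P=Q$), and conclude via the regularity of Euler summation (Corollary~\ref{cor:regEu}). One cosmetic slip: the identity that collapses the inner sum is trinomial revision, $\binom{m}{n}\binom{n}{k}=\binom{m}{k}\binom{m-k}{n-k}$, followed by the binomial theorem, not the Chu--Vandermonde identity cited before Corollary~\ref{cor:regEu}; the rest of the bookkeeping is sound.
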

\begin{proof}
For any $P \in \mathbb{C}^{d \times d}$ with $P \succ 0$ and $P_1P = PP_1$,
\begin{multline}\label{eq:transforms-eq}
\sum_{n=0}^m \sum_{k=0}^n \binom{m}{n}\binom{n}{k} P^{m-n} (I+P)^{-m-1} P_1^{n-k}(I+P_1)^{-n-1} A_k \\
= \sum_{k=0}^m \binom{m}{k} (P_1+ P + P_1 P)^{m-k}(I+P_1+ P + P_1 P)^{-m-1}A_k = \E^{P_1+ P + P_1 P}_m(A).
\end{multline}
Suppose $\sum_{k=0}^\infty A_k \doubleovereq{E}{P_1} S$. Since $\sum_{n=0}^\infty \E_n^{P_1}(A) = S$, it is Euler summable for any $P \succ 0$.
Set $P=(P_2-P_1)(I+P_1)^{-1}$. By the regularity in Corollary~\ref{cor:regEu}, $\sum_{n=0}^\infty \E_n^{P_1}(A) \doubleovereq{E}{P} S$, i.e.,
\[
  \sum_{m=0}^\infty \sum_{k=0}^m \binom{m}{k} P_2^{m-k}(I+P_2)^{-m-1}A_k = S.
\]
Hence $\sum_{k=0}^\infty A_k \doubleovereq{E}{P_2} S$.
\end{proof}
Equation~\eqref{eq:transforms-eq} reveals the composition rule for Euler transforms: If $P_1$ and $P_2$ commutes, then the $P_1$-Euler transform of the $P_2$-Euler transform is the $(P_1+P_2+P_1P_2)$-Euler transform. To gain more insights, we apply it to the Neumann series.
\begin{proposition}[Euler summability of Neumann series]
For $X\in \mathbb{C}^{d\times d}$, $P \succ 0$, and $PX = XP$,
\[
    \sum_{k=0}^\infty X^k \doubleovereq{E}{P} (I-X)^{-1}
\] 
if and only if  $\lambda\bigl((I+P)^{-1}(P+X)\bigr)\subseteq \mathbb{D}$.
\end{proposition}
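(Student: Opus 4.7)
The plan is to reduce the Euler-summed Neumann series to an ordinary Neumann series in the auxiliary matrix $Y \coloneqq (I+P)^{-1}(P+X)$. Because $P$ commutes with $X$ (and hence with $I+P$ and $(I+P)^{-1}$), I can apply the binomial theorem inside the definition \eqref{eq:Aq} to obtain a clean closed form for the Euler transform of $A_\bullet = (X^k)_{k=0}^\infty$:
\[
\E_n^P(A_\bullet) = (I+P)^{-n-1}\sum_{k=0}^n \binom{n}{k} P^{n-k} X^k = (I+P)^{-n-1}(P+X)^n = (I+P)^{-1} Y^n.
\]
This is the main algebraic identity driving the whole proof.

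From this identity, the Euler sum in question equals $(I+P)^{-1}\sum_{n=0}^\infty Y^n$, and since $(I+P)^{-1}$ is invertible, the series $\sum_{n=0}^\infty \E_n^P(A_\bullet)$ converges if and only if the conventional Neumann series $\sum_{n=0}^\infty Y^n$ converges. By the standard characterization of the conventional convergence of a matrix Neumann series (quoted at the outset of the introduction), this happens if and only if $\lambda(Y) \subseteq \mathbb{D}$, at which point $\sum_{n=0}^\infty Y^n = (I-Y)^{-1}$.

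To finish, I need to compare $(I+P)^{-1}(I-Y)^{-1}$ with $(I-X)^{-1}$. A direct computation using commutativity gives
\[
I - Y = I - (I+P)^{-1}(P+X) = (I+P)^{-1}\bigl[(I+P) - (P+X)\bigr] = (I+P)^{-1}(I-X),
\]
so $(I-Y)^{-1} = (I-X)^{-1}(I+P)$ (note that $\lambda(Y)\subseteq\mathbb{D}$ forces $1 \notin \lambda(Y)$, which via this identity is equivalent to $I-X$ being invertible). Then, again using commutativity,
\[
\sum_{n=0}^\infty \E_n^P(A_\bullet) = (I+P)^{-1}(I-X)^{-1}(I+P) = (I-X)^{-1}.
\]
This establishes the backward direction, and the forward direction is the contrapositive of the same chain of equivalences: if $\lambda(Y) \not\subseteq \mathbb{D}$, then $\sum_{n=0}^\infty Y^n$ diverges, hence so does $\sum_{n=0}^\infty \E_n^P(A_\bullet)$, and in particular the series is not $(\mathsf{E},P)$-summable to $(I-X)^{-1}$.

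There is no real obstacle here beyond careful bookkeeping of commutativity; the only conceptual point is recognizing that the $P$-Euler transform of a geometric sequence $X^k$ is itself a geometric sequence with ratio $Y=(I+P)^{-1}(P+X)$, which immediately reduces Euler summability to conventional summability of the geometric series in $Y$. Everything else is algebra.
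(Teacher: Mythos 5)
Your proof is correct and follows essentially the same route as the paper's: both hinge on the closed-form identity $\E_n^P(A_\bullet)=(I+P)^{-n-1}(P+X)^n=(I+P)^{-1}Y^n$ (valid via the binomial theorem because $P$ commutes with $X$), reducing Euler summability to conventional summability of the Neumann series in $Y=(I+P)^{-1}(P+X)$. You additionally spell out the algebraic verification that $(I+P)^{-1}(I-Y)^{-1}=(I-X)^{-1}$, which the paper leaves implicit; that is a sound and welcome addition, not a different method.
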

\begin{proof}
By commutativity, the $P$-Euler transform of the Neumann series is
\begin{equation}\label{eq:eq_neumann}
    \E^P_n(X) =(I+P)^{-n-1\sum_{k=0}^n \binom{n}{k}P^{n-k}X^k} =(I+P)^{-n-1}(P+X)^n.
\end{equation}
Therefore,
\[
    \sum_{n=0}^\infty \E_n^P(X) = (I+P)^{-1} \sum_{n=0}^\infty ((I+P)^{-1}(P+X))^n,
\]
which is conventionally summable to $(I-X)^{-1}$ if and only if $\lambda\bigl((I+P)^{-1}(P+X)\bigr)\subseteq \mathbb{D}$.
\end{proof}
The case of $P = \rho I$ for a scalar $\rho>0$ is worth stating separately as they  commute with all $X\in \mathbb{C}^{d\times d}$.
\begin{corollary}\label{cor:euneu}
For $X\in \mathbb{C}^{d\times d}$, $\rho>0$,
\[
    \sum_{k=0}^\infty X^k \doubleovereq{E}{\rho} (I-X)^{-1}
\] 
if and only if  $\lambda(X)\subseteq \{ z \in \mathbb{C} : \lvert z + \rho \rvert < 1+\rho\}$.
\end{corollary}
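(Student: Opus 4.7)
The plan is to apply the preceding Proposition (Euler summability of Neumann series) with the specific choice $P = \rho I$ and then to re-express the resulting spectral condition in the form claimed. First, since $\rho > 0$, we have $\rho I \succ 0$, and since $\rho I$ is a scalar multiple of the identity it commutes with every $X \in \mathbb{C}^{d \times d}$. Thus the hypotheses of the Proposition are satisfied for an arbitrary $X \in \mathbb{C}^{d\times d}$, and it tells us that $\sum_{k=0}^\infty X^k \doubleovereq{E}{\rho} (I-X)^{-1}$ is equivalent to $\lambda\bigl((I + \rho I)^{-1}(\rho I + X)\bigr) \subseteq \mathbb{D}$.

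It remains only to translate this condition. Writing $(I + \rho I)^{-1}(\rho I + X) = (1+\rho)^{-1}(\rho I + X)$ and using the elementary fact that eigenvalues transform under an affine function of a single matrix, the spectrum of $(1+\rho)^{-1}(\rho I + X)$ is precisely $\{(\rho + \mu)/(1+\rho) : \mu \in \lambda(X)\}$. Requiring every element of this set to lie in $\mathbb{D}$ is equivalent, after multiplying by $1+\rho > 0$, to $\lvert \rho + \mu \rvert < 1 + \rho$ for every $\mu \in \lambda(X)$, which is exactly the condition $\lambda(X) \subseteq \{z \in \mathbb{C} : \lvert z + \rho \rvert < 1+\rho\}$.

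There is essentially no obstacle here: the result is a direct specialization of the preceding Proposition, and the only points worth flagging are that $\rho > 0$ guarantees both $\rho I \succ 0$ and the legality of clearing the denominator $1+\rho$ in the inequality. Geometrically, the region $\{z : \lvert z+\rho \rvert < 1+\rho\}$ is the open disc of radius $1+\rho$ centered at $-\rho$, i.e., the preimage of $\mathbb{D}$ under the affine bijection $\mu \mapsto (\rho+\mu)/(1+\rho)$; this disc strictly contains $\mathbb{D}$ and sweeps out more of the left half-plane as $\rho$ grows, which explains qualitatively why enlarging $\rho$ enlarges the region of $(\mathsf{E},\rho)$-summability for the Neumann series.
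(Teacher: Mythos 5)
Your proof is correct and matches the paper's intended (and essentially unstated) argument: the paper introduces this Corollary with the single remark that $P = \rho I$ commutes with every $X$, leaving the specialization and the affine rescaling of the spectrum implicit, and you have simply written those two small steps out in full, including the geometric description of the resulting disc.
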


Intuitively, choosing a ``small'' $P \in \mathbb{C}^{d \times d}$ ought to increase the rate of convergence. But it is difficult to obtain a universal relationship as the convergence rate invariably depends on the series. For scalar series, this is discussed in \cite{Knopp} and \cite{Rosser}. For matrix series, we will illustrate this numerically in Section ~\ref{sec:acc}.

Euler methods are generalized by the Borel methods. We will discuss their relationship in Section~\ref{sec:borel}. 

\section{Functional summation methods}\label{sec:func}

In sequential summation methods, we have a partial function $\mathsf{R} : s(\mathbb{C}^{d \times d}) \dashrightarrow c(\mathbb{C}^{d \times d})$ and the sum is the value of a limiting process as $n \to \infty$.  In functional summation methods, we have a partial function $\mathsf{R} : s(\mathbb{C}^{d \times d}) \dashrightarrow c(C_b(\Omega)^{d \times d})$ and the sum is the value of \emph{two} limiting processes---a sequential limit as $n \to \infty$ followed by a continuous limit as $x \to x_*$ in $\Omega$.

We now lay out the details. Let $\Omega \subseteq \mathbb{R}$ and $x_* \in \overline{\Omega}$ or $x_* =\infty$. Let $\mathsf{R} : s(\mathbb{C}^{d \times d}) \dashrightarrow c(C_b(\Omega)^{d \times d})$ be a partial function defined by
\begin{equation}\label{eq:functional-transformation}
    \mathsf{R}(A_\bullet)(x) = \sum_{k=0}^\infty B_k(x)A_k
\end{equation}
for some $B_k \in C_b(\Omega)^{d \times d}$, $k \in \mathbb{N}$. The $\mathsf{R}$-sum is defined to be
\[
S \coloneqq \lim_{x\to x_*} \mathsf{R}(A_\bullet)(x)
\]
if the limit exists, and in which case we write $\sum_{k=0}^\infty A_k\overeq{R} S$. The careful reader might notice that while we wrote   $\mathsf{R} : s(\mathbb{C}^{d \times d}) \dashrightarrow c(C_b(\Omega)^{d \times d})$, \eqref{eq:functional-transformation} seems to imply that $\mathsf{R} : s(\mathbb{C}^{d \times d}) \dashrightarrow C_b(\Omega)^{d \times d}$. The reason is that for a convergent series we do not distinguish between its sequence of partial sums in  $c(C_b(\Omega)^{d \times d})$ and its limit in $C_b(\Omega)^{d \times d}$.

We begin with a sufficient condition for the regularity of functional summation methods \eqref{eq:functional-transformation}. Unlike Theorem~\ref{thm:LinearCharacterization}, the following result is not extended from any analogous result for scalar series but \cite[Theorem~25]{hardy} comes closest.
\begin{theorem}\label{thm:functionalCharacterization}
    Let $\Omega \subseteq \mathbb{R}$, $x_* \in \overline{\Omega}$ or $x_* =\infty$, $B_k \in C_b(\Omega)^{d \times d}$, and $k\in \mathbb{N}$. Suppose 
    \begin{enumerate}[\normalfont(i)]
    \item there exists $\eta_0>0$ such that $\lVert B_0(x) \rVert \le \eta_0$ for all $x\in \Omega $; \label{cocond:1} 
    \item $\lim_{x\to x_*}B_k(x)=I$ for each $k \in \mathbb{N}$; \label{cocond:2} 
    \item there exists $\eta_1>0$ such that $\sum_{k=0}^\infty \lVert B_k(x)-B_{k+1}(x) \rVert < \eta_1$ for all $x\in \Omega $. \label{cocond:3}
    \end{enumerate}
    Then for any $A_\bullet  \in s(\mathbb{C}^{d \times d})$ such that $\sum_{k=0}^\infty A_k=S$, the series $\sum_{k=0}^\infty B_k(x)A_k$ is summable in the conventional sense for each $x\in \Omega$ with
    \begin{equation}\label{eq:functional-convergence}
\sum_{k=0}^\infty B_k(x)A_k \in C_b(\Omega)^{d \times d}  \quad \text{and} \quad     \lim_{x\to x_*}\sum_{k=0}^\infty B_k(x)A_k = S.
    \end{equation}
\end{theorem}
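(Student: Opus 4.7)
The plan is to imitate Hardy's scalar-level proof based on Abel summation by parts, performed in the matrix setting with all bounds kept uniform in $x$ so that pointwise summability, membership in $C_b(\Omega)^{d\times d}$, and the desired limit all drop out together. First I would reduce to the case $S=0$ by replacing $A_0$ with $A_0-S$: the new series has partial sums converging to $0$, and condition \ref{cocond:2} applied to $k=0$ ensures $D_0(x)S\to S$, accounting for the subtracted term.

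Assuming $S=0$, I would then invoke Abel summation by parts, writing
\[
T_N(x) \coloneqq \sum_{k=0}^N D_k(x)A_k = \sum_{k=0}^{N-1}\bigl(D_k(x)-D_{k+1}(x)\bigr)S_k + D_N(x)S_N.
\]
From $D_N(x) = D_0(x)-\sum_{k=0}^{N-1}(D_k(x)-D_{k+1}(x))$ together with \ref{cocond:1} and \ref{cocond:3} we obtain the key uniform bound $\norm{D_N(x)}\le \eta_0+\eta_1$. Combining this with $S_N\to 0$ and the absolute summability of $\sum_k(D_k(x)-D_{k+1}(x))S_k$, which is controlled by $\eta_1\beta$ where $\beta$ bounds the convergent sequence $S_\bullet$, we get pointwise summability for every $x\in\Omega$ and the representation
\[
T(x) \coloneqq \sum_{k=0}^\infty D_k(x)A_k = \sum_{k=0}^\infty\bigl(D_k(x)-D_{k+1}(x)\bigr)S_k.
\]

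Next I would upgrade this to uniform convergence $T_N\to T$ on $\Omega$, which immediately delivers both continuity and boundedness. Given $\varepsilon>0$, choose $N_0$ so that $\norm{S_k}<\varepsilon$ for all $k\ge N_0$; then for $N\ge N_0$,
\[
\norm{T(x)-T_N(x)} \le \varepsilon\sum_{k=N}^\infty\norm{D_k(x)-D_{k+1}(x)} + \norm{D_N(x)}\norm{S_N} \le (2\eta_1+\eta_0)\varepsilon,
\]
uniformly in $x$. Each $T_N$ is a finite linear combination of matrices whose entries lie in $C_b(\Omega)$ and hence lies in $C_b(\Omega)^{d\times d}$, so the same is true of the uniform limit $T$.

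For the final limit $x\to x_*$, I would fix $\varepsilon>0$, choose $m$ with $\norm{S_k}<\varepsilon/(2\eta_1)$ for $k>m$, and split
\[
T(x) = \sum_{k=0}^{m}\bigl(D_k(x)-D_{k+1}(x)\bigr)S_k + \sum_{k=m+1}^\infty\bigl(D_k(x)-D_{k+1}(x)\bigr)S_k;
\]
the tail is bounded by $\varepsilon/2$ uniformly in $x$ by \ref{cocond:3}, while the remaining finite sum tends to $0$ as $x\to x_*$ because \ref{cocond:2} forces $D_k(x)-D_{k+1}(x)\to 0$ for each individual $k$. The main obstacle I anticipate is not any single computation but the bookkeeping required to keep \emph{every} estimate uniform in $x$: Abel summation by parts in the noncommutative matrix setting must consistently left-multiply $S_k$ by $D_k(x)$, and the constants $\eta_0, \eta_1$ from \ref{cocond:1} and \ref{cocond:3} must be invoked in ways independent of both $N$ and $x$, for without this uniformity one loses the membership $T\in C_b(\Omega)^{d\times d}$ even though pointwise summability and the limit would still follow.
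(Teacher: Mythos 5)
Your proposal is correct and follows the same overall strategy as the paper: Abel summation by parts, the uniform bound $\norm{D_N(x)}\le\eta_0+\eta_1$ derived from conditions \ref{cocond:1} and \ref{cocond:3}, a finite/tail split controlled by conditions \ref{cocond:2} and \ref{cocond:3} for the limit $x\to x_*$, and reduction to $S=0$ by shifting $A_0$. The one place where you genuinely diverge is the proof that $\sum_k D_k(x)A_k\in C_b(\Omega)^{d\times d}$, and there your route is preferable. The paper claims the termwise bound $\sum_{k}\lVert D_k(x)A_k\rVert\le\sum_k\lVert D_k(x)-D_{k+1}(x)\rVert\,\lVert S_k\rVert+\lVert D(x)\rVert\,\lVert S\rVert$ and then invokes absolute summability in $C_b(\Omega)^{d\times d}$; but that inequality does not follow from Abel's identity (which controls the norm of partial sums, not the sum of norms), and it already fails in the scalar case $D_k\equiv 1$, $A_k=(-1)^k/k$. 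Your argument instead shows the partial sums $T_N$ are uniformly Cauchy on $\Omega$ — the tail $\sum_{k\ge N}(D_k(x)-D_{k+1}(x))S_k$ and the boundary term $D_N(x)S_N$ are both $O(\varepsilon)$ uniformly in $x$ once $N$ is large — and concludes that the uniform limit of the continuous bounded functions $T_N$ lies in $C_b(\Omega)^{d\times d}$. This is both correct and exactly the kind of uniformity-in-$x$ bookkeeping you anticipated being the crux; the paper's own writeup would benefit from the same fix.
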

\begin{proof}
Let $S_n = \sum_{k=0}^n A_k$, $n \in \mathbb{N}$. Then
\[
    \sum_{k=0}^n B_k(x)A_k = \sum_{k=0}^{n-1} (B_k(x)-B_{k+1}(x))S_k + B_n(x)S_n.
\]
By Conditions~\ref{cocond:1} and \ref{cocond:3}, for each $x\in \Omega $ and $n \in \mathbb{N}$,
\[
    \lim_{k \to \infty} \lVert B_k(x)-B_{k+1}(x)\rVert = 0 , \quad \lVert B_n(x) \rVert \le \lVert B_0(x) \rVert + \sum_{k=0}^{n-1} \lVert B_k(x)-B_{k+1}(x)\rVert \le \eta_0 + \eta_1.
\]
So the sequence $(B_k(x))_{k=0}^\infty$ is convergent for each $x\in \Omega $ and 
\[
\Bigl \lVert \lim_{k\to \infty} B_k(x) \Bigr \rVert \le \eta_0 + \eta_1.
\]
Thus there exists a bounded function $B(x)$ such that $\lim_{k\to \infty} B_k(x)=B(x)$ for each $x \in \Omega$. 

We start with the left equality in \eqref{eq:functional-convergence}. As $S_\bullet$ is convergent, it is bounded by some $\beta>0$. By Condition~\ref{cocond:3},
\begin{align*}
    \sup_{x\in \Omega}\sum_{k=0}^\infty \lVert B_k(x)A_k \rVert &\leq \sup_{x\in \Omega} \biggl[\sum_{k=0}^{\infty} \lVert B_k(x)-B_{k+1}(x)\rVert \cdot \lVert S_k \rVert  + \lVert B(x) \rVert \cdot \lVert S \rVert \biggr] \\
    &\leq \sup_{x \in \Omega} \biggl[\sum_{k=0}^\infty \lVert B_k(x)-B_{k+1}(x)\rVert \cdot \lVert S_k \rVert\biggr] + \sup_{x \in \Omega}\lVert B(x) \rVert \cdot \lVert S \rVert < \infty.
\end{align*}
As absolute summability implies summability in a Banach space, we have  $\sum_{k=0}^\infty B_k(x)A_k \in C_b(\Omega)^{d \times d}$.

For the limit in \eqref{eq:functional-convergence}, assume first that $S=0$. For $\varepsilon>0$, choose $m\in \mathbb{N}$ sufficiently large so that $\norm{S_k} < \varepsilon/2\eta_1$ for all $k > m$. By Condition~\ref{cocond:2},
\[
    \lim_{x\to x_*}\sum_{k=0}^m (B_k(x)-B_{k+1}(x))S_k = 0.
\]
By Condition~\ref{cocond:3},
\[
    \Bigl\|\sum_{k\geq m}(B_k(x)-B_{k+1}(x))S_k \Bigr\| \le \frac{\varepsilon}{2\eta_1}\sum_{k\geq m} \norm{B_k(x)-B_{k+1}(x)}\le \frac{\varepsilon}{2}.
\]
Therefore,
\[
    \lim_{x\to x_*}\sum_{k=0}^\infty B_k(x)A_k = \lim_{x\to x_*} \sum_{k=0}^{\infty} (B_k(x)-B_{k+1}(x))S_k + B(x)S = 0.
\]
For $S \neq 0$, we just apply the same argument to $A'_\bullet = (A'_k)_{k=0}^\infty$ with
\[
A_k' \coloneqq \begin{cases} A_0 -S & k = 0,\\ 
A_k & k=1,2, \dots.
\end{cases} \qedhere
\]
\end{proof}
Theorem~\ref{thm:functionalCharacterization} may be interpreted as follows: A functional summation method \eqref{eq:functional-transformation} that satisfies Condition~\ref{cocond:2} is a perturbation of the original series. If  the perturbation functions $B_k$'s are uniformly bounded, i.e., Condition~\ref{cocond:1} holds, and if the changes are small enough at each step in the sense of Condition~\ref{cocond:3}, then we have regularity. We will use Theorem~\ref{thm:functionalCharacterization} to establish regularity of two powerful matrix summation methods.

\subsection{Abelian means}\label{sec:abelian}

The scalar version of this class of summation methods gained its name from the Abel summation method, which contains the well-known Abel's Theorem for power series as a special case \cite{history}. We will generalize it to matrix series.
\begin{definition}\label{def:Abemeans}
    Let $P_\bullet \coloneqq (P_k)_{k=0}^\infty$ be an unbounded sequence of positive definite matrices strictly increasing in the Loewner order, i.e., $0 \prec P_0 \prec P_1 \prec \cdots$, and $\lim_{k \to \infty} \lVert P_k \rVert = \infty$. For $A_\bullet = (A_k)_{k=0}^\infty \in s(\mathbb{C}^{d \times d})$, the series $\sum_{k=0}^\infty A_k $ is \emph{summable in Abelian means} to $S \in \mathbb{C}^{d \times d}$ with respect to $P_\bullet$ if 
    $\sum_{k=0}^\infty A_k e^{-P_k x}$ is conventionally summable for all $x\in (0,\infty)$ and 
    \[
        \lim_{x\to 0}\sum_{k=0}^\infty A_k e^{-P_k x} = S.
    \]
    We denote this by
    \[
        \sum_{k=0}^\infty A_k \doubleovereq{A}{P_\bullet} S.
    \] 
\end{definition}
\begin{corollary}[Regularity of Abelian means]
    Let $P_\bullet \coloneqq (P_k)_{k=0}^\infty$ be such that $0 \prec P_0 \prec P_1 \prec \cdots$ and $\lim_{k \to \infty} \lVert P_k \rVert = \infty$.  For any $A_\bullet \in s(\mathbb{C}^{d \times d})$, if $\sum_{k=0}^\infty A_k =S$, then $\sum_{k=0}^\infty A_k \doubleovereq{A}{P_\bullet} S$.
\end{corollary}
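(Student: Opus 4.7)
I would apply Theorem~\ref{thm:functionalCharacterization} with $\Omega = (0, \infty)$, $x_{*} = 0^+$, and $D_k(x) = e^{-P_k x}$. The definition of Abelian means places the matrix exponential to the right of $A_k$ rather than to the left as in the generic schema~\eqref{eq:functional-transformation}, but the Abel-summation argument underlying Theorem~\ref{thm:functionalCharacterization} is symmetric under this reversal, so the same conclusion applies.

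Conditions~\ref{cocond:1} and~\ref{cocond:2} are routine. Since $P_0 \succ 0$ is Hermitian, $e^{-P_0 x}$ is Hermitian with eigenvalues $e^{-\lambda_i(P_0) x} \in (0, 1]$, so $\|e^{-P_0 x}\| = e^{-\lambda_{\min}(P_0) x} \le 1$ uniformly in $x > 0$, yielding $\eta_0 = 1$. Continuity of the matrix exponential delivers $\lim_{x \to 0^+} e^{-P_k x} = e^0 = I$ for every $k$.

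The heart of the argument is Condition~\ref{cocond:3}, i.e., a uniform bound $\sum_{k=0}^\infty \|e^{-P_k x} - e^{-P_{k+1} x}\| \le \eta_1$ for all $x > 0$. The monotone chain $0 \prec P_0 \prec P_1 \prec \cdots$ implies, via Weyl's inequality, that the ascending-sorted eigenvalues $\lambda_i^{\uparrow}(P_k)$ are nondecreasing in $k$. The descending-sorted eigenvalues $e^{-\lambda_i^{\uparrow}(P_k) x}$ of $e^{-P_k x}$ are therefore nonincreasing in $k$, so the trace telescopes:
\[
    \sum_{k=0}^\infty \bigl[\operatorname{tr}(e^{-P_k x}) - \operatorname{tr}(e^{-P_{k+1} x})\bigr] \le \operatorname{tr}(e^{-P_0 x}) \le d.
\]
When the $P_k$'s pairwise commute, they share a common eigenbasis, each difference $e^{-P_k x} - e^{-P_{k+1} x}$ is positive semidefinite, and its spectral norm is bounded by its trace; the telescoping then yields $\eta_1 = d$. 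In general, one passes from the trace-level estimate to the spectral-norm sum via an eigenvalue majorization argument of Lidskii--Mirsky type, controlling the singular values of each possibly indefinite Hermitian difference by the corresponding eigenvalue increments of $e^{-P_k x}$.

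The main obstacle I foresee is exactly this last passage in the non-commuting case: since $e^{-P_k x} - e^{-P_{k+1} x}$ need not be positive semidefinite, absolute summability in the spectral norm does not follow directly from the telescoping of partial sums, and one must invoke a majorization/interlacing principle rather than bare positivity. Once Condition~\ref{cocond:3} is in hand, Theorem~\ref{thm:functionalCharacterization} simultaneously yields the conventional summability of $\sum_{k=0}^\infty A_k e^{-P_k x}$ for every $x > 0$ and the identification $\lim_{x \to 0^+} \sum_{k=0}^\infty A_k e^{-P_k x} = S$, which is the claimed $(\mathsf{A}, P_\bullet)$-regularity.
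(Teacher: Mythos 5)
Your approach is the same as the paper's: instantiate Theorem~\ref{thm:functionalCharacterization} with $\Omega = (0,\infty)$, $x_* = 0$, $D_k(x) = e^{-P_k x}$ and verify the three conditions. The left-vs.-right multiplication issue you raise is harmless since the theorem's proof goes through unchanged with $A_k D_k(x)$ in place of $D_k(x) A_k$, and your verification of Conditions~\ref{cocond:1} and~\ref{cocond:2} is correct.

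The obstacle you flag for Condition~\ref{cocond:3} is real and, in fact, is present in the paper's own proof. The paper disposes of Condition~\ref{cocond:3} by telescoping,
\[
\sum_{k=0}^\infty \bigl(e^{-P_kx} - e^{-P_{k+1}x}\bigr) = e^{-P_0x} - \lim_{k\to\infty} e^{-P_kx} \preceq I,
\]
and then concludes. But Condition~\ref{cocond:3} asks for $\sum_k \lVert e^{-P_kx} - e^{-P_{k+1}x}\rVert$ to be uniformly bounded, and passing from the telescoped \emph{value} to the sum of \emph{norms} requires each difference $e^{-P_kx} - e^{-P_{k+1}x}$ to be positive semidefinite, i.e., it requires $t \mapsto e^{-t}$ to be operator monotone decreasing. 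It is not: with $P = \diag(1,3)$ and $Q = \begin{pmatrix} 2 & 1 \\ 1 & 4 \end{pmatrix}$ one has $P \prec Q$, yet $e^{-P} - e^{-Q}$ has a negative eigenvalue. So the telescoping argument, whether in the paper's proof or yours, only establishes Condition~\ref{cocond:3} when the $P_k$'s pairwise commute, where indeed $\sum_k \lVert e^{-P_kx} - e^{-P_{k+1}x} \rVert \le \sum_k \operatorname{tr}(e^{-P_kx} - e^{-P_{k+1}x}) \le \operatorname{tr}(e^{-P_0 x}) \le d$.

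I would also not expect a Lidskii--Mirsky argument to close the gap: those inequalities bound sums of eigenvalue differences by norms of the matrix difference, which is the opposite of what you need. Weyl's inequality does give eigenvalue-wise dominance of $e^{-P_kx}$ over $e^{-P_{k+1}x}$, hence the traces telescope, but for non-commuting matrices the trace norm $\lVert e^{-P_kx} - e^{-P_{k+1}x}\rVert_1$ can strictly exceed the trace gap $\operatorname{tr}(e^{-P_kx}) - \operatorname{tr}(e^{-P_{k+1}x})$, so the trace telescoping does not control the sum of norms. Absent commutativity or some additional quantitative hypothesis on $P_\bullet$, such as uniform boundedness of $\lVert P_{k+1}-P_k\rVert / \bigl(\lambda_{\min}(P_{k+1})-\lambda_{\min}(P_k)\bigr)$ (which would let a Duhamel-formula estimate succeed), neither your proposal nor the paper's proof completes the verification of Condition~\ref{cocond:3}.
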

\begin{proof}
Summation by Abelian means with respect to $P_\bullet$ is of the form \eqref{eq:functional-transformation}. So we just need to check the conditions of Theorem~\ref{thm:functionalCharacterization}. For any $x\in (0,\infty)$,
\[
    \sum_{k=0}^\infty \bigl( e^{-P_kx} - e^{-P_{k+1}x}  \bigr) = e^{-P_0 x} -\lim_{k \to \infty} e^{-P_k x} = e^{-P_0 x} \preceq I.
\]
So Conditions~\ref{cocond:1} and \ref{cocond:3} are satisfied. Condition~\ref{cocond:2} is also satisfied  as $\lim_{x\to 0^+} e^{-P_kx} =I$ for any $k \in \mathbb{N}$.
\end{proof}

The special case  $P_\bullet = (0,I,2I,\dots)$, which reduces to a power series under the change-of-variable $t = e^{-x}$, gives us the matrix analogue of the well-known Abel summability. 
\begin{definition}
    For $A_\bullet = (A_k)_{k=0}^\infty \in s(\mathbb{C}^{d \times d})$, the series $\sum_{k=0}^\infty A_k$ is \emph{Abel summable} to $S \in \mathbb{C}^{d \times d}$ if $\sum_{k=0}^\infty A_k x^k$ is conventionally summable for all $x\in (0,1)$ and
    \[
        \lim_{x \to 1^-}\sum_{k=0}^\infty A_k x^k = S.
    \]
    We denote this by
    \[
        \sum_{k=0}^\infty A_k \overeq{A} S.
    \]
\end{definition}
We will see next that Abel summability is implied by Ces\`aro summability.
\begin{theorem} 
    For $A_\bullet \in s(\mathbb{C}^{d \times d})$ and $S\in \mathbb{C}^{d \times d}$, if $ \sum_{k=0}^\infty A_k \overeq{C} S$, then $\sum_{k=0}^\infty A_k \overeq{A} S$. 
\end{theorem}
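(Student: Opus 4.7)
The plan is to adapt the classical Frobenius argument (Ces\`aro implies Abel for scalar series), which carries over essentially verbatim once we notice that the weights involved are all scalars in $(0,1)$ and therefore commute with every matrix. The matrix structure enters the proof only through the matrix norms of $S_n$ and $\Sigma_n$.

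First, I would establish the double-telescoping identity
\[
    \sum_{k=0}^\infty A_k x^k \;=\; (1-x)\sum_{k=0}^\infty S_k x^k \;=\; \frac{(1-x)^2}{x}\sum_{n=1}^\infty n\,\Sigma_n\, x^n \qquad \text{for all } x \in (0,1).
\]
The first equality is Abel summation by parts applied to $A_k = S_k - S_{k-1}$ (with $S_{-1}=0$), and the second is the same maneuver applied to $S_n = (n+1)\Sigma_{n+1} - n\Sigma_n$, an immediate consequence of $\Sigma_n = \frac{1}{n}\sum_{k=0}^{n-1} S_k$. Since $\lim_{n\to\infty}\Sigma_n = S$ the sequence $\Sigma_\bullet$ is bounded, say by $M$, giving $\lVert S_n\rVert \le M(2n+1)$; this guarantees absolute convergence of all three series on $(0,1)$, so the rearrangement is legitimate. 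In particular $\sum_{k=0}^\infty A_k x^k$ is conventionally summable for every $x \in (0,1)$, as required by Definition~\ref{def:Abemeans}.

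Second, I would exploit the scalar identity $\tfrac{(1-x)^2}{x}\sum_{n=1}^\infty n x^n = 1$ on $(0,1)$ to subtract $S$ inside the sum:
\[
    \sum_{k=0}^\infty A_k x^k - S \;=\; \frac{(1-x)^2}{x}\sum_{n=1}^\infty n\,(\Sigma_n - S)\, x^n.
\]
Given $\varepsilon > 0$, choose $N$ with $\lVert \Sigma_n - S\rVert < \varepsilon$ for $n \ge N$, and split the right-hand sum at $N$. The head is a fixed finite combination of matrices, so the prefactor $(1-x)^2/x$ forces its norm to $0$ as $x \to 1^-$; the tail is bounded in norm by $\varepsilon \cdot \tfrac{(1-x)^2}{x}\sum_{n=N}^\infty n\, x^n \le \varepsilon$. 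Hence $\limsup_{x\to 1^-}\lVert \sum_{k=0}^\infty A_k x^k - S\rVert \le \varepsilon$, and since $\varepsilon$ was arbitrary, the Abel limit equals $S$.

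The main obstacle is minimal. The only places to be careful are (i) verifying absolute convergence so that the two applications of summation by parts can be rearranged freely, and (ii) noting that the scalar weights $n x^n$ and $(1-x)^2/x$ commute past the matrix factors $\Sigma_n - S$; consequently there is no need to invoke Theorem~\ref{thm:functionalCharacterization} or to worry about noncommutativity.
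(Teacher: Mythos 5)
Your proof is correct, and it takes a genuinely cleaner route than the paper's. Both arguments hinge on Abel summation by parts and the facts that $\Sigma_\bullet$ is bounded and convergent, but the execution differs. The paper performs a single summation by parts to get $\sum_{k=0}^\infty A_k x^k = (1-x)\sum_{k=0}^\infty S_k x^k$, subtracts $S$, and then controls the tail by writing $S_k - S = k(\Sigma_{k+1}-\Sigma_k) + (\Sigma_{k+1}-S)$, which forces it to track two quantities (the distance $\lVert \Sigma_k - S\rVert$ and the successive differences $\lVert \Sigma_{k+1}-\Sigma_k\rVert$) and leads to a slightly awkward quantifier structure where the cutoff index is chosen after $x$ is fixed. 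Your double summation by parts, landing on $\sum_{k=0}^\infty A_k x^k = \frac{(1-x)^2}{x}\sum_{n\ge 1} n\,\Sigma_n\, x^n$, pairs with the exact scalar identity $\frac{(1-x)^2}{x}\sum_{n\ge 1} n x^n = 1$ to subtract $S$ term by term, so the tail estimate needs only the single bound $\lVert \Sigma_n - S\rVert < \varepsilon$ and the cutoff $N$ depends on $\varepsilon$ alone. Your preliminary bound $\lVert S_n\rVert \le (2n+1)M$ is exactly what's needed for the absolute convergence that legitimizes both rearrangements, and your observation that the weights $x^k$, $n x^n$, $(1-x)^2/x$ are scalars and hence commute past the matrices is the correct reason why the classical Frobenius argument transfers verbatim, making Theorem~\ref{thm:functionalCharacterization} unnecessary here. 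In short: same family of ideas, but your version is the textbook-clean one; the paper's is a more hands-on estimate on $S_k - S$ that accomplishes the same thing with slightly more bookkeeping.
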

\begin{proof}
Suppose $ \sum_{k=0}^\infty A_k \overeq{C} S$. For any $n\in \mathbb{N}$, let $S_n = \sum_{k=0}^n A_k$  and
\[
    \Sigma_n = \frac{1}{n}\sum_{k=0}^{n-1}S_k.
\]
For any $x\in (0,1)$, we have $S_n = (n+1)\Sigma_{n+1}-n\Sigma_n$ and
\begin{equation}\label{eq:abel1}
    \sum_{k=0}^n A_k x^k = S_n x^n + \sum_{k=0}^{n-1}S_k x^k(1-x). 
\end{equation} 
Since $\lim_{n\to \infty} nx^n = 0 $ and $\lim_{n\to \infty} \Sigma_n = S$, we get
$\lim_{n\to \infty} S_n x^n = 0$. By \eqref{eq:abel1},
\begin{align*}
    \biggl\|{\sum_{k=0}^\infty A_k x^k-S} \biggr\|
    &=\biggl\|{\sum_{k=0}^\infty S_k x^k (1-x)-\frac{S}{1-x}(1-x)} \biggr\|\displaybreak[0] = \biggl\|{\sum_{k=0}^\infty S_k x^k(1-x)-\sum_{k=0}^\infty S x^k(1-x)}\biggr\| \displaybreak[0]\\
    &= \biggl\|{\sum_{k=0}^\infty (S_k-S) x^k(1-x)} \biggr\| \displaybreak[0] \le \sum_{k=0}^\infty \norm{S_k-S}x^k(1-x).
\end{align*}
For $x\in (0,1)$, $\varepsilon>0$, choose $m$ sufficiently large such that if $k,p,q>m$,  
\[
    \norm{\Sigma_k - S} < \frac{(1-x)}{2} \varepsilon\quad \text{and} \quad \norm{\Sigma_p-\Sigma_q} < \frac{(1-x)}{2}\varepsilon.
\]
Then 
\begin{align*}
    \sum_{k=0}^\infty \norm{S_k-S}x^k(1-x)\displaybreak[0]   &= (1-x)\biggl[\sum_{k=0}^m x^k\norm{S_k-S} + \sum_{\mathclap{k=m+1}}^\infty x^k\norm{S_k-S}\biggr] \displaybreak[0]\\
    &= (1-x)\biggl[\sum_{k=0}^m x^k\norm{S_k-S} + \sum_{\mathclap{k=m+1}}^\infty x^k\norm{k(\Sigma_{k+1}-\Sigma_k)+(\Sigma_{k+1}-S)}\biggr] \displaybreak[0]\\
    &\le (1-x)\biggl[\sum_{k=0}^m x^k\norm{S_k-S} + \sum_{\mathclap{k=m+1} }^\infty x^k \Bigl(k\norm{\Sigma_{k+1}-\Sigma_k}+\norm{\Sigma_{k+1}-S} \Bigr)\biggr] \displaybreak[0]\\
    &< (1-x)\biggl[\sum_{k=0}^m x^k\norm{S_k-S} + \sum_{\mathclap{k=m+1}}^\infty x^k(k+1)(1-x)\varepsilon\biggr] \displaybreak[0]\\
    &= (1-x)\sum_{k=0}^m x^k\norm{S_k-S} + \varepsilon(1-x)^2\sum_{\mathclap{k=m+1}}^\infty(k+1)x^k \displaybreak[0]\\
    &< (1-x)\sum_{k=0}^m x^k\norm{S_k-S} + \varepsilon.
\end{align*}
Taking limit $x \to 1^{-}$, we deduce the required Abel summability.
\end{proof} 
Again, we will use the Neumann series as a test case. From the perspective of summing the Neumann series, Abel summation is the ``right" generalization of Ces\`aro summation in that it overcomes the difficulty associated with Jordan blocks of size greater than $1$, which we discussed after Proposition~\ref{prop:cesaro}.
\begin{lemma}
    Let $J_{\lambda} \in \mathbb{C}^{d \times d}$ be a Jordan block with eigenvalue $\lambda$. Then
    \[
    \sum_{k=1}^\infty J_{\lambda}^k \overeq{A} (I-J_{\lambda})^{-1}
\]
    if and only if $\lambda \in \overline{\mathbb{D}} \setminus \{1\}$. 
\end{lemma}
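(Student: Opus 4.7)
The plan is to treat the Abel-perturbed series $\sum_{k=0}^\infty J_\lambda^k x^k$ as a Neumann series in the matrix $xJ_\lambda$ and analyze its limit as $x\to 1^-$. Since $\sum_{k=1}^\infty J_\lambda^k$ and $\sum_{k=0}^\infty J_\lambda^k$ differ by the constant $I$, Abel summability of one is equivalent to Abel summability of the other, so it suffices to work with the latter and worry about the additive constant only at the end.

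For the direction ($\Leftarrow$), assume $\lambda\in\overline{\mathbb{D}}\setminus\{1\}$. For every $x\in(0,1)$, the spectral radius of $xJ_\lambda$ equals $|\lambda|x<1$, so by \eqref{eq:neumann} the series $\sum_{k=0}^\infty (xJ_\lambda)^k$ is conventionally summable to $(I-xJ_\lambda)^{-1}$. Since $1\notin\lambda(J_\lambda)$, the matrix $I-J_\lambda$ is invertible, and by continuity of inversion on the open set of invertible matrices, $(I-xJ_\lambda)^{-1}\to(I-J_\lambda)^{-1}$ as $x\to 1^-$. This establishes Abel summability with the claimed value. For the direction ($\Rightarrow$), I would extract scalar information from the $(1,1)$-entry of $J_\lambda^k$, which equals $\lambda^k$ by \eqref{eq:Jordan}. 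Abel summability of $\sum_{k=0}^\infty J_\lambda^k$ requires $\sum_{k=0}^\infty J_\lambda^k x^k$ to be conventionally summable for every $x\in(0,1)$; reading off the $(1,1)$-entry reduces this to summability of $\sum_{k=0}^\infty (\lambda x)^k$ on all of $(0,1)$, which forces $|\lambda|\le 1$, i.e.\ $\lambda \in \overline{\mathbb{D}}$. Existence of the limit as $x\to 1^-$ then rules out $\lambda=1$, since the $(1,1)$-entry would become $1/(1-x)$, which diverges. Hence $\lambda\in\overline{\mathbb{D}}\setminus\{1\}$.

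I do not anticipate a serious obstacle: the argument reduces cleanly to a scalar geometric series at the $(1,1)$-entry for necessity, and to continuity of matrix inversion for sufficiency. The only point deserving a sanity check is that the matrix limit $(I-xJ_\lambda)^{-1}\to(I-J_\lambda)^{-1}$ really does capture the entrywise Abel sums of the off-diagonal terms; one could verify this directly using $\sum_{k\ge 0}\binom{k}{m}z^k = z^m/(1-z)^{m+1}$ applied to the explicit form of $J_\lambda^k$ in \eqref{eq:Jordan}, but the abstract continuity argument already covers all entries simultaneously and so no such calculation is needed.
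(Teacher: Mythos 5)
Your proof is correct, and the two directions split naturally into a comparison with the paper. The necessity direction ($\Rightarrow$) is essentially the paper's argument: both extract scalar information (your explicit $(1,1)$-entry $\lambda^k$ is implicit in the paper's statement that the series diverges for $x>1/|\lambda|$, and both note $\lambda=1$ produces the divergent $1/(1-x)$). The sufficiency direction ($\Leftarrow$) is where you genuinely diverge from the paper: the paper explicitly computes each entry via the binomial identity, obtaining $\bigl(\sum_{k\ge 0}J_\lambda^k x^k\bigr)_{ij}=x^{j-i}/(1-\lambda x)^{j-i+1}$, and then takes the limit entry by entry to recognize $(I-J_\lambda)^{-1}$; you instead observe that for $x\in(0,1)$ the spectral radius of $xJ_\lambda$ is $|\lambda|x<1$, so the Neumann series sums conventionally to $(I-xJ_\lambda)^{-1}$, and then invoke continuity of matrix inversion at the invertible point $I-J_\lambda$. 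Your argument is shorter and conceptually cleaner, bypassing the binomial-series computation entirely, while the paper's explicit entrywise formula has the pedagogical virtue of displaying the limiting matrix directly and previews the kind of entrywise Jordan-block manipulation used elsewhere in the paper. One last caveat, which also affects the paper's own statement: since the sum in the lemma starts at $k=1$, the Abel sum is $\lim_{x\to 1^-}\bigl[(I-xJ_\lambda)^{-1}-I\bigr]=(I-J_\lambda)^{-1}-I$, not $(I-J_\lambda)^{-1}$; you flagged the additive constant but then dropped it, reproducing the paper's off-by-one. This does not affect the characterization of when Abel summability holds, which is the substance of the lemma.
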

\begin{proof}
If $\abs{\lambda}>1$, then $\sum_{k=0}^\infty J_{\lambda}^k x^k$ is not summable for $x > 1/\lambda$, so the series is not Abel summable. If $\lambda=1$, then $\lim_{x\to 1^-} \sum_{k=0}^\infty J_{\lambda}^k x^k$ does not exist, so the series is not Abel summable either.

For the converse, suppose $\lambda \in \overline{\mathbb{D}} \setminus \{1\}$. Let  $0<x<1$. 
By \eqref{eq:Jordan}, for $i,j=1,\dots,d$ with $j \geq i$, the $(i,j)$th entry of the matrix
\[
    \biggl(\sum_{k=0}^\infty J_{\lambda}^k x^k \biggr)_{ij} = x^{j-i}\sum_{k=0}^\infty \binom{k+j-i}{j-i}(\lambda x)^k = \frac{x^{j-i}}{(1-\lambda x)^{j-i+1}}.
\]
Therefore, 
\[
    \sum_{k=1}^\infty J_{\lambda}^k \overeq{A}
    \begin{bmatrix}
        \frac{1}{1-\lambda} &\frac{1}{(1-\lambda)^2}  &\frac{1}{(1-\lambda)^3} &\ldots &\frac{1}{(1-\lambda)^{d-1}}\\
         &\frac{1}{1-\lambda} &\frac{1}{(1-\lambda)^2}   &\ldots &\frac{1}{(1-\lambda)^{d-2}}\\
         & &\ddots &\ddots &\vdots\\
         & & &\frac{1}{1-\lambda} &\frac{1}{(1-\lambda)^2}\\
         & & & &\frac{1}{1-\lambda}
    \end{bmatrix} = (I-J_{\lambda})^{-1}. \qedhere
\]
\end{proof}
We then apply the lemma to a Jordan decomposition to deduce what we are after.
\begin{corollary}[Abel summability of Neumann series]
    For $X\in \mathbb{C}^{d\times d}$,
\[
    \sum_{k=0}^\infty X^k \overeq{A} (I-X)^{-1}
\] 
if and only if  $\lambda(X)\subseteq \overline{\mathbb{D}} \setminus \{1\}$.
\end{corollary}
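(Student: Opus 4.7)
The plan is to reduce the matrix statement to the scalar/single-Jordan-block statement via Jordan decomposition, and then apply the lemma immediately preceding the corollary to each block.

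First, I would write $X = W J W^{-1}$ with $J = \diag(J_{\lambda_1},\dots,J_{\lambda_r})$ the Jordan canonical form, where each $J_{\lambda_i} \in \mathbb{C}^{d_i \times d_i}$ is a Jordan block with eigenvalue $\lambda_i \in \lambda(X)$ (repeating eigenvalues as necessary, one block per Jordan block). Since conjugation by $W$ is a linear isomorphism of $\mathbb{C}^{d \times d}$, and in particular continuous, we have
\[
\sum_{k=0}^\infty X^k x^k = W \biggl( \sum_{k=0}^\infty J^k x^k \biggr) W^{-1} = W \diag\biggl(\sum_{k=0}^\infty J_{\lambda_1}^k x^k,\dots,\sum_{k=0}^\infty J_{\lambda_r}^k x^k\biggr) W^{-1}
\]
for each $x \in (0,1)$ for which the blockwise series converge, and the limit as $x \to 1^-$ of the left-hand side exists if and only if the limit exists for every block on the right-hand side. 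Thus $\sum_{k=0}^\infty X^k$ is Abel summable if and only if $\sum_{k=0}^\infty J_{\lambda_i}^k$ is Abel summable for each $i=1,\dots,r$.

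For the backward direction, assume $\lambda(X) \subseteq \overline{\mathbb{D}} \setminus \{1\}$. Each $\lambda_i$ then lies in $\overline{\mathbb{D}} \setminus \{1\}$, and the preceding lemma gives $\sum_{k=0}^\infty J_{\lambda_i}^k \overeq{A} (I - J_{\lambda_i})^{-1}$. Reassembling via the Jordan decomposition,
\[
\sum_{k=0}^\infty X^k \overeq{A} W \diag\bigl((I-J_{\lambda_1})^{-1},\dots,(I-J_{\lambda_r})^{-1}\bigr) W^{-1} = W(I - J)^{-1}W^{-1} = (I - X)^{-1}.
\]
For the forward direction, the contrapositive is immediate: if some $\lambda_i \notin \overline{\mathbb{D}} \setminus \{1\}$, then the lemma says $\sum_{k=0}^\infty J_{\lambda_i}^k$ is not Abel summable, so by the blockwise equivalence above neither is $\sum_{k=0}^\infty X^k$.

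I do not anticipate any real obstacle: the only subtlety worth double-checking is that Abel summability of the matrix series is genuinely equivalent to simultaneous Abel summability of all Jordan blocks (as opposed to each block separately), which follows because $W$ and $W^{-1}$ are fixed invertible matrices, so both the convergence of the scalar-variable series for $x \in (0,1)$ and the existence of the limit as $x \to 1^-$ transfer cleanly between $X$ and $J$. Everything else is essentially a bookkeeping exercise on Jordan blocks that the lemma has already handled.
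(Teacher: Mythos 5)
Your proposal is correct and follows essentially the same route as the paper's proof: Jordan decomposition $X = WJW^{-1}$, apply the preceding lemma block by block, and argue both directions by noting that Abel summability of the full series is equivalent to Abel summability of every Jordan subblock. You spell out slightly more explicitly why conjugation by the fixed invertible $W$ transfers summability cleanly, but this is the same argument the paper leaves implicit.
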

\begin{proof}
Let $\lambda(X) = \{ \lambda_1,\dots, \lambda_r\} \subseteq \overline{\mathbb{D}} \setminus \{1\}$ counting multiplicities and $X = WJW^{-1}$ be a Jordan decomposition with  $J = \diag(J_{\lambda_1},\dots,J_{\lambda_r})$ and $J_{\lambda_i}$ the Jordan block corresponding to $\lambda_i$, $i=1,\dots, r$. Then
\[
\sum_{k=0}^\infty X^k =\sum_{k=0}^\infty W 
\begin{bmatrix}
    J_{\lambda_1}^k & &\\
     &\ddots &\\
     & &J_{\lambda_r}^k 
\end{bmatrix}W^{-1} \overeq{A}  W
\begin{bmatrix}
    (I-J_{\lambda_1})^{-1} & &\\
     &\ddots &\\
     & &(I-J_{\lambda_r})^{-1}
\end{bmatrix}W^{-1} = (I-X)^{-1}.
\]
For the converse, suppose without loss of generality that $\lambda_1 \notin \overline{\mathbb{D}} \setminus \{1\}$. Then
\[
    \sum_{k=0}^\infty W 
\begin{bmatrix}
    J_{\lambda_1}^k & &\\
     &\ddots &\\
     & &J_{\lambda_r}^k 
\end{bmatrix}W^{-1}
\]
is not Abel summable as the first Jordan subblock is not Abel summable.
\end{proof}

\subsection{Lambert method}\label{sec:lambert}

The original Lambert summation method, named after Johann Heinrich Lambert, played an important role in number theory \cite{hardylittlewood21, tauberian_theory, wiener32}, and is a particularly potent tool for summing Dirichlet series, as we will see in Section~\ref{sec:dirichlet}.  Here we will generalize Lambert summation to matrix series. 
\begin{definition}\label{def:lambert}
   For $A_\bullet = (A_k)_{k=0}^\infty \in s(\mathbb{C}^{d \times d})$, the series $\sum_{k=1}^\infty A_k$ is \emph{Lambert summable} to $S\in \mathbb{C}^{d \times d}$ if $\sum_{k=1}^\infty {kA_kx^k}/(1+x+\dots +x^{k-1})$ is conventionally summable for every $x\in(0,1)$ and
    \[
        \lim_{x\to 1^-}(1-x)\sum_{k=1}^\infty \frac{kx^k}{1-x^k}  A_k = S.
    \]
    We denote this by
    \[
        \sum_{k=1}^\infty A_k \overeq{L} S.
    \] 
\end{definition}
\begin{corollary}[Regularity of Lambert summation]
     For $A_\bullet \in s(\mathbb{C}^{d \times d})$ and $S\in \mathbb{C}^{d \times d}$, if $\sum_{k=0}^\infty A_k =S$, then $\sum_{k=0}^\infty A_k \overeq{L} S$.
\end{corollary}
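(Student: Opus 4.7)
The plan is to realize Lambert summation as a functional summation method \eqref{eq:functional-transformation} and invoke Theorem~\ref{thm:functionalCharacterization}. Taking $\Omega = (0,1)$, $x_* = 1$, and weights $D_k(x) = f_k(x)\,I$ with
\[
f_k(x) \coloneqq \frac{k(1-x)x^k}{1-x^k}, \qquad k \ge 1,\; x \in (0,1),
\]
the Lambert expression in Definition~\ref{def:lambert} is exactly $\sum_{k=1}^\infty D_k(x) A_k$. The small discrepancy that the corollary begins summing at $k=0$ while Definition~\ref{def:lambert} begins at $k=1$ is handled by reindexing (set $B_k \coloneqq A_{k+1}$, $\widetilde D_k \coloneqq D_{k+1}$) and noting that Theorem~\ref{thm:functionalCharacterization} is insensitive to the starting index.

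Verifying Conditions~\ref{cocond:1} and~\ref{cocond:2} of Theorem~\ref{thm:functionalCharacterization} is immediate. For~\ref{cocond:1}, the initial weight $\widetilde D_0(x) = f_1(x)\,I = x I$ satisfies $\lVert \widetilde D_0(x) \rVert \le 1$ on $(0,1)$. For~\ref{cocond:2}, rewriting
\[
f_k(x) = \frac{k x^k}{1+x+\dots+x^{k-1}}
\]
makes $\lim_{x \to 1^-} f_k(x) = 1$ evident for each fixed $k$.

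The substantive step is Condition~\ref{cocond:3}, which requires $\sum_{k=1}^\infty \lvert f_k(x) - f_{k+1}(x)\rvert$ to be uniformly bounded on $(0,1)$. The plan is to show that $k \mapsto f_k(x)$ is monotonically decreasing for each fixed $x \in (0,1)$, so the telescoping sum collapses to
\[
f_1(x) - \lim_{k \to \infty} f_k(x) = x - 0 \le 1.
\]
A direct computation gives
\[
f_k(x) - f_{k+1}(x) = \frac{(1-x)\,x^k\,\bigl[k - (k+1)x + x^{k+1}\bigr]}{(1-x^k)(1-x^{k+1})},
\]
and since the prefactor is positive on $(0,1)$, monotonicity reduces to showing $g_k(x) \coloneqq k - (k+1)x + x^{k+1} \ge 0$ on $(0,1)$. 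This follows from $g_k(1) = 0$ combined with $g_k'(x) = (k+1)(x^k - 1) \le 0$ on $(0,1)$, which makes $g_k$ decreasing there.

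The main obstacle is exactly the monotonicity of $f_k(x)$ in $k$: cruder termwise bounds would fail to yield a uniformly summable telescoping tail, so identifying and verifying the inequality $g_k \ge 0$ is the key analytic input. Once the three conditions are in hand, Theorem~\ref{thm:functionalCharacterization} applies verbatim and delivers regularity.
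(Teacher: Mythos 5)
Your proof is correct and follows the same approach as the paper (realizing Lambert summation as a functional method and verifying the three conditions of Theorem~\ref{thm:functionalCharacterization}). The one place you go beyond the paper is in Condition~\ref{cocond:3}: the paper simply asserts that $\sum_{k=1}^\infty (1-x)\bigl|\tfrac{kx^k}{1-x^k}-\tfrac{(k+1)x^{k+1}}{1-x^{k+1}}\bigr| = x$, whereas you supply the missing justification --- the monotonicity of $k\mapsto f_k(x)$ via the inequality $g_k(x)=k-(k+1)x+x^{k+1}\ge 0$ on $(0,1)$ --- which is precisely what is needed to drop the absolute values and collapse the telescope.
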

\begin{proof}
Lambert summation is of the form \eqref{eq:functional-transformation}, so we check the conditions of Theorem~\ref{thm:functionalCharacterization}. Since $\lvert x \rvert <1 $ for $x\in (0,1)$, Condition~\ref{cocond:1} is satisfied. For each $k\in \mathbb{N}$,
\[
    \lim_{x\to 1^{-}} \frac{kx^k}{1+x+\dotsb+x^{k-1}} = 1,
\]
so Condition~\ref{cocond:2} is satisfied. Condition~\ref{cocond:3} is also satisfied as for any $x\in (0,1)$, 
\[
    \sum_{k=0}^\infty (1-x)\biggl|\frac{kx^k}{1-x^k}-\frac{(k+1)x^{k+1}}{1-x^{k+1}}\biggr| = x \le 1. \qedhere
\]
\end{proof}

\subsection{Borel and Mittag-Leffler methods}\label{sec:borel}

The scalar versions of Borel summation methods, named after \'{E}mile Borel \cite{Borel_paper}, have important applications in physics \cite{glimmjaffe, borel}. They come in two variants (weak and strong) and we will extend them to matrix series.
\begin{definition}\label{def:wborel}
For $A_\bullet = (A_k)_{k=0}^\infty \in s(\mathbb{C}^{d \times d})$, its \emph{weak Borel transform} is 
\[
    \WB (A_\bullet)(x) = \sum_{k=0}^{\infty} S_k \frac{x^k}{k!}
\]
for all $x > 0$, where $S_n = \sum_{k=0}^n A_k$.
The series $\sum_{k=0}^\infty A_k$ is \emph{weakly Borel summable} to $S \in \mathbb{C}^{d \times d}$ if $e^{-x} \WB (A_\bullet)(x)$ is conventionally summable for all $x > 0$ and
\[
    \lim_{x\to \infty} e^{-x} \WB (A_\bullet)(x)=S.
\]
We denote this by
\[
    \sum_{k=0}^\infty A_k \overeq{WB} S.
\] 
\end{definition}
\begin{theorem}[Regularity of weak Borel summation]
     For $A_\bullet \in s(\mathbb{C}^{d \times d})$, if $\sum_{k=0}^\infty A_k =S$, then $\sum_{k=0}^\infty A_k \overeq{WB} S$.
\end{theorem}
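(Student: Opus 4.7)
The plan is to reduce the claim to Theorem~\ref{thm:functionalCharacterization}, mirroring the treatment of Abelian means and Lambert summation earlier in this section. First I would put the weak Borel transform into the required form $\sum_{k=0}^\infty D_k(x) A_k$. Starting from
\[
e^{-x}\WB(A_\bullet)(x) = \sum_{k=0}^\infty S_k\, e^{-x}\frac{x^k}{k!},
\]
substituting $S_k = \sum_{j=0}^k A_j$ and interchanging the order of summation --- justified by absolute convergence, since the partial sums are bounded ($\lVert A_j \rVert \le 2\sup_k \lVert S_k \rVert$) while the Poisson weights decay factorially --- yields
\[
e^{-x}\WB(A_\bullet)(x) = \sum_{j=0}^\infty D_j(x)\, A_j, \qquad D_j(x) \coloneqq e^{-x}\sum_{m=j}^\infty \frac{x^m}{m!}\, I.
\]

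Next I would verify the three conditions of Theorem~\ref{thm:functionalCharacterization} with $\Omega = (0,\infty)$ and $x_* = \infty$. Condition~\ref{cocond:1} is immediate from $D_0(x) = e^{-x}e^x I = I$, so $\lVert D_0(x) \rVert = 1$ uniformly. For Condition~\ref{cocond:2}, writing $D_j(x) = I - e^{-x}\sum_{m=0}^{j-1}(x^m/m!)\,I$ exhibits the error term as $e^{-x}$ times a polynomial in $x$, which tends to $0$ as $x \to \infty$, so $\lim_{x\to\infty}D_j(x) = I$ for each $j$. Condition~\ref{cocond:3} follows by telescoping: $D_j(x) - D_{j+1}(x) = e^{-x}(x^j/j!)\, I$ is the Poisson probability at $j$ times $I$, hence nonnegative, and $\sum_{j=0}^\infty \lVert D_j(x) - D_{j+1}(x) \rVert = e^{-x}\sum_{j=0}^\infty x^j/j! = 1$ uniformly in $x\in (0,\infty)$.

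With all three conditions verified, Theorem~\ref{thm:functionalCharacterization} yields $\lim_{x\to\infty}\sum_{j=0}^\infty D_j(x) A_j = S$, which is precisely $\sum_{k=0}^\infty A_k \overeq{WB} S$. There is no serious obstacle; the only slightly delicate point is the rearrangement underlying the identity $e^{-x}\WB(A_\bullet)(x) = \sum_j D_j(x) A_j$, and it is handled by the absolute bound above. As an alternative avoiding Theorem~\ref{thm:functionalCharacterization}, one could argue directly: setting $R_k \coloneqq S_k - S$, one has $e^{-x}\WB(A_\bullet)(x) - S = e^{-x}\sum_k R_k\, x^k/k!$, and given $\varepsilon > 0$, choosing $N$ with $\lVert R_k \rVert < \varepsilon/2$ for $k > N$ bounds the tail by $\varepsilon/2$ via total Poisson mass, while the head is a polynomial times $e^{-x}$ that vanishes as $x \to \infty$.
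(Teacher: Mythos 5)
Your proof is correct, and it takes a genuinely different route from the paper's. The paper does not reduce weak Borel summation to Theorem~\ref{thm:functionalCharacterization}; instead it works directly with the partial-sum form $e^{-x}\sum_{k\ge 0} S_k x^k/k!$, bounding it by $\beta = \sup_k\lVert S_k\rVert$ to get conventional summability, and then estimating $\lVert e^{-x}\WB(A_\bullet)(x) - S\rVert \le e^{-x}\sum_k \lVert S_k - S\rVert x^k/k!$ and asserting this tends to $0$. (The last step is stated tersely; it implicitly needs exactly the split-at-$N$ argument you spell out in your closing ``alternative'' paragraph: the tail is small because $\lVert S_k - S\rVert < \varepsilon$ for $k > N$ and the Poisson mass is $1$, while the head is $e^{-x}$ times a polynomial.) By contrast, your main argument rewrites $e^{-x}\WB(A_\bullet)(x)$ in terms of the $A_j$ via Fubini, obtaining the functional form $\sum_j D_j(x)A_j$ with $D_j(x)$ the Poisson tail probability times $I$, and then checks the three conditions of Theorem~\ref{thm:functionalCharacterization} --- which is exactly what the paper does for Abelian means and Lambert summation, so your approach makes weak Borel fall under the same umbrella as those methods. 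What the paper's direct estimate buys is that it avoids the Fubini rearrangement step; what your reduction buys is conceptual uniformity --- it exhibits weak Borel summation as yet another instance of the general regularity criterion, and the verification of the three conditions (telescoping to $e^{-x}x^j/j!\,I$ with total mass $1$) is pleasantly mechanical. Both are valid; your version is arguably the more systematic one given the structure of Section~\ref{sec:func}.
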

\begin{proof}
Let $S_n = \sum_{k=0}^n A_k$, $n \in \mathbb{N}$. As $S_\bullet$ is a convergent sequence, it is bounded by some $\beta > 0$. For each $x > 0$,
\[
    \lVert e^{-x} \WB (A_\bullet)(x) \rVert = \biggl\lVert e^{-x} \sum_{k=0}^{\infty} \frac{x^k}{k!}S_k \biggr\rVert \le \beta  e^{-x} \sum_{k=0}^{\infty} \frac{x^k}{k!} = \beta.
\]
Hence $e^{-x} \sum_{k=0}^{\infty} S_k x^k/k!$ is conventionally summable for all $x > 0$. Moreover, 
\begin{align*}
     \bigl\| \lim_{x \to \infty} e^{-x} \WB (A_\bullet)(x) - S \bigr\| & = \biggl\| \lim_{x\to \infty} e^{-x} \sum_{k=0}^{\infty} \frac{x^k}{k!}S_k - \lim_{x\to \infty} e^{-x} \sum_{k=0}^{\infty} \frac{x^k}{k!}S \biggr\|\\
    & \le \lim_{x\to \infty} e^{-x} \sum_{k=0}^{\infty} \frac{x^k}{k!} \norm{S_k- S}= 0. \qedhere
\end{align*}
\end{proof}
\begin{definition}\label{def:sborel}
For $A_\bullet = (A_k)_{k=0}^\infty \in s(\mathbb{C}^{d \times d})$, its \emph{strong Borel transform} is
\[
\SB (A_\bullet)(x)\coloneqq \sum_{k=0}^{\infty} A_k \frac{x^k}{k!}
\]
for $x > 0$.
The series $\sum_{k=0}^\infty A_k$ is \emph{strongly Borel summable} to $S\in \mathbb{C}^{d \times d}$ if 
\[
\int_{0}^{x}  e^{-t}\SB (A_\bullet)(t) \,dt
\]
is conventionally summable for all $x>0$ and
\begin{equation}\label{eq:strongborel}
    \int_{0}^{\infty} e^{-x} \SB (A_\bullet)(x) \,dx = S.
\end{equation}
We denote this by
\[
    \sum_{k=0}^\infty A_k \overeq{SB} S.
\] 
\end{definition}
It turns out that, for a series of scalars, the strong Borel method is a special case of the Mittag-Leffler summation. We will show that the same is true for a series of matrices with the following matrix generalization of the latter.
\begin{definition}\label{def:ml}
For $A_\bullet = (A_k)_{k=0}^\infty \in s(\mathbb{C}^{d \times d})$, the series $\sum_{k=0}^\infty A_k$ is \emph{Mittag-Leffler summable} to $S\in \mathbb{C}^{d \times d}$ with respect to $\alpha > 0$ if 
\[
\int_{0}^{x}  e^{-t}\sum_{k=0}^\infty \frac{A_k t^{\alpha k}}{\Gamma(1+\alpha k)} \,dt
\]
is conventionally summable for all $x>0$ and
\[
    \int_{0}^{\infty} e^{-x} \sum_{k=0}^\infty \frac{A_k x^{\alpha k}}{\Gamma(1+\alpha k)} \,dx =S.
\]
We denote this by
\[
    \sum_{k=0}^\infty A_k \overeq{M} S.
\] 
The implicit choice of $\alpha$ is not reflected in the notation. Here $\Gamma(x) \coloneqq \int_0^\infty t^{x-1}e^{-t}\, dt$  is the  Gamma function.
\end{definition}
If we set $\alpha = 1$ above, we obtain the strong Borel method.
\begin{theorem}[Regularity of Mittag-Leffler summation]
     For $\alpha>0$ and $A_\bullet \in s(\mathbb{C}^{d \times d})$, if $\sum_{k=0}^\infty A_k =S$, then $\sum_{k=0}^\infty A_k \overeq{M} S$. In particular, if $\sum_{k=0}^\infty A_k =S$, then $\sum_{k=0}^\infty A_k \overeq{SB} S$.
\end{theorem}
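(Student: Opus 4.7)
The plan is to reduce to $S=0$, then combine a Fubini swap on finite intervals with a summation-by-parts argument that exploits the monotonicity in $k$ of the incomplete-Gamma integrals
\[
F_k(R) \coloneqq \int_0^R \frac{e^{-t}\, t^{\alpha k}}{\Gamma(1+\alpha k)}\,dt, \qquad R>0,\; k\in\mathbb{N}.
\]

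First I would reduce to the case $S=0$. The constant series $(S,0,0,\dots)$ is Mittag--Leffler summable to $S$ since $\int_0^\infty e^{-x}S\,dx = S$, so by linearity of the map $A_\bullet\mapsto \int_0^\infty e^{-x}\sum_k A_k x^{\alpha k}/\Gamma(1+\alpha k)\,dx$, it suffices to prove the claim under the stronger hypothesis $S_n\to 0$. Then $\norm{A_k}\le M$ for some $M$, so the inner series $\sum_k A_k t^{\alpha k}/\Gamma(1+\alpha k)$ converges absolutely at each $t>0$ with norm at most $M\sum_k t^{\alpha k}/\Gamma(1+\alpha k)$, an entire function of $t^\alpha$. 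For any fixed $R>0$, the super-exponential decay $F_k(R)\le R^{\alpha k+1}/\Gamma(\alpha k+2)$ makes $\sum_k\norm{A_k}F_k(R)$ finite, so Fubini/Tonelli gives
\[
\int_0^R e^{-t}\sum_k A_k\frac{t^{\alpha k}}{\Gamma(1+\alpha k)}\,dt \;=\; \sum_k A_k\, F_k(R).
\]

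It remains to show the right-hand side tends to $0$ as $R\to\infty$. Abel summation by parts rewrites it as
\[
\sum_k A_k\, F_k(R) \;=\; \sum_k S_k\bigl(F_k(R)-F_{k+1}(R)\bigr),
\]
the boundary term $S_N F_N(R)$ vanishing as $N\to\infty$ because $F_N(R)\le R^{\alpha N+1}/\Gamma(\alpha N+2)\to 0$ for fixed $R$. The structural observation driving the argument is that $F_k(R)-F_{k+1}(R)\ge 0$: a Gamma$(1+\alpha(k+1),1)$ random variable couples as the sum of an independent Gamma$(1+\alpha k,1)$ and an independent Gamma$(\alpha,1)$, hence stochastically dominates the former, so $F_k(R)$ is monotone decreasing in $k$. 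Given $\varepsilon>0$, choose $N_0$ with $\norm{S_k}<\varepsilon$ for $k>N_0$. The tail is bounded in norm by $\varepsilon\sum_{k>N_0}(F_k(R)-F_{k+1}(R))\le\varepsilon\,F_{N_0+1}(R)\le\varepsilon$ by telescoping, and the head $\sum_{k\le N_0} S_k(F_k(R)-F_{k+1}(R))$ is a fixed finite sum each of whose terms tends to $0$ because $F_k(R)\to 1$ as $R\to\infty$ for every $k$. Letting $\varepsilon\to 0$ finishes the argument; the strong Borel case is the specialization $\alpha=1$.

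The main obstacle is that a direct dominated convergence in $x$ applied to the partial sums $\sum_{k=0}^N A_k x^{\alpha k}/\Gamma(1+\alpha k)$ fails: the only available pointwise majorant is $M\sum_k x^{\alpha k}/\Gamma(1+\alpha k)$, and after multiplication by $e^{-x}$ this is not integrable on $[0,\infty)$ (in fact it approaches $1/\alpha$ rather than $0$ as $x\to\infty$). Performing Fubini only on finite intervals $[0,R]$ and then using the Gamma stochastic dominance to telescope the tail is exactly what lets us control things uniformly in $R$ and circumvent this failure.
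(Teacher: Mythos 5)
Your proof is correct, and it is actually \emph{more} rigorous than the paper's. The paper's argument is a single displayed chain of equalities
\[
S= \sum_{k=0}^{\infty} A_k = \sum_{k=0}^{\infty} \biggl(\int_{0}^{\infty} e^{-x}x^{\alpha k} \,dx \biggr)\frac{A_k}{\Gamma(1+\alpha k)} = \int_{0}^{\infty} e^{-x} \sum_{k=0}^\infty \frac{A_k x^{\alpha k}}{\Gamma(1+\alpha k)} \,dx,
\]
which simply uses $\Gamma(1+\alpha k)=\int_0^\infty e^{-x}x^{\alpha k}\,dx$ and then interchanges the infinite sum with the improper integral without comment. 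Under the stated hypothesis --- $\sum A_k$ converges \emph{conditionally}, not absolutely --- this interchange is precisely what needs proof, and Fubini--Tonelli does not apply directly; as you observe, the only obvious majorant $\lVert A_k\rVert\le M$ yields $e^{-x}\sum_k M x^{\alpha k}/\Gamma(1+\alpha k)$, which tends to $M/\alpha$ rather than $0$ as $x\to\infty$ and is not integrable, so dominated convergence also fails. Your argument supplies the missing justification: restrict Fubini to finite intervals $[0,R]$ where absolute convergence of $\sum_k\lVert A_k\rVert F_k(R)$ is cheap, perform Abel summation by parts to pass from $A_k$ to the partial sums $S_k$, and then exploit the monotonicity $F_k(R)\ge F_{k+1}(R)$ (the Gamma$(1+\alpha k,1)$ vs.\ Gamma$(1+\alpha(k+1),1)$ stochastic dominance) so that the tail telescopes to $\varepsilon F_{N_0+1}(R)\le\varepsilon$ uniformly in $R$, while the head is a fixed finite sum of terms that vanish as $R\to\infty$ because $F_k(R)\to 1$. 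This is a genuinely different and more careful route than the paper's, and it correctly handles the conditional-convergence issue that the paper glosses over; the specialization $\alpha=1$ gives the strong Borel case exactly as you state.
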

\begin{proof}
This follows from
\[
    S= \sum_{k=0}^{\infty} A_k = \sum_{k=0}^{\infty} \biggl(\int_{0}^{\infty} e^{-x}x^{\alpha k} \,dx \biggr)\frac{A_k}{\Gamma(1+\alpha k)} = \int_{0}^{\infty} e^{-x} \sum_{k=0}^\infty \frac{A_k x^{\alpha k}}{\Gamma(1+\alpha k)} \,dx.  \qedhere
\]
\end{proof}
We will next justify the `weak' and `strong' designations and see when they are equivalent \cite{borweinshawyer66,borel}, generalizing \cite[Theorem~123]{hardy} to matrices.
\begin{theorem}\label{thm:borels}
Let $A_\bullet = (A_k)_{k=0}^\infty \in s(\mathbb{C}^{d \times d})$. If $\sum_{k=0}^\infty A_k \overeq{WB} S$, then $\sum_{k=0}^\infty A_k \overeq{SB} S$. The converse holds if and only if $\lim_{x\to \infty}e^{-x}\SB(A_\bullet)(x) = 0$.
\end{theorem}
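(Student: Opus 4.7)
The plan is to derive a single integral identity connecting the two Borel transforms and to read off both assertions of the theorem from it. First I would prove the pointwise relation
\[
    \SB(A_\bullet)(x) = \WB(A_\bullet)(x) - \int_0^x \WB(A_\bullet)(t)\,dt
\]
by direct term-by-term computation: integrating $\WB(A_\bullet)(t) = \sum_{k=0}^\infty S_k t^k/k!$ from $0$ to $x$ yields $\sum_{k=0}^\infty S_k x^{k+1}/(k+1)!$, and after reindexing, the difference telescopes via $S_k - S_{k-1} = A_k$ to $\SB(A_\bullet)(x)$. The interchange of sum and integral is legitimate since $\WB(A_\bullet)$ is an entire function of $x$.

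Next I would multiply this identity by $e^{-x}$, integrate from $0$ to $y$, and perform an integration by parts on the iterated integral (with $u = \int_0^x \WB(A_\bullet)(t)\,dt$ and $dv = e^{-x}\,dx$); the cancellations collapse everything to the clean identity
\[
    \int_0^y e^{-x}\SB(A_\bullet)(x)\,dx = e^{-y}\int_0^y \WB(A_\bullet)(t)\,dt,
\]
which is the engine of the proof.

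For the forward implication, assume $\lim_{y\to\infty} e^{-y}\WB(A_\bullet)(y) = S$. Applying L'H\^opital's rule entry-wise to the ratio $\int_0^y \WB(A_\bullet)(t)\,dt / e^y$ --- the denominator tends to infinity and the quotient of derivatives equals $e^{-y}\WB(A_\bullet)(y)$, which tends to $S$ --- yields $e^{-y}\int_0^y \WB(A_\bullet)(t)\,dt \to S$. By the engine identity this is exactly $\int_0^y e^{-x}\SB(A_\bullet)(x)\,dx \to S$, i.e.\ $\sum_{k=0}^\infty A_k \overeq{SB} S$.

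For the equivalence in the second sentence, rewriting the first identity multiplied by $e^{-y}$ and then substituting the engine identity gives
\[
    e^{-y}\WB(A_\bullet)(y) = e^{-y}\SB(A_\bullet)(y) + \int_0^y e^{-x}\SB(A_\bullet)(x)\,dx.
\]
Under the hypothesis $\sum_{k=0}^\infty A_k \overeq{SB} S$, the integral on the right converges to $S$, so $e^{-y}\WB(A_\bullet)(y) \to S$ (weak Borel summability to $S$) if and only if $e^{-y}\SB(A_\bullet)(y) \to 0$, establishing both directions of the equivalence. The only obstacle I anticipate is bookkeeping: justifying the term-by-term integration and the entry-wise application of L'H\^opital to matrix-valued functions, both of which are routine since $\WB(A_\bullet)$ and $\SB(A_\bullet)$ are matrix-valued entire functions and L'H\^opital applies in each scalar entry separately.
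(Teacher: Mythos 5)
Your proposal reaches the same key identity
\[
e^{-y}\WB(A_\bullet)(y) \;=\; e^{-y}\SB(A_\bullet)(y) \;+\; \int_0^y e^{-x}\SB(A_\bullet)(x)\,dx
\]
that the paper derives (there it is obtained by computing $\frac{d}{dt}\bigl(e^{-t}\WB(A_\bullet)(t)\bigr)$ and integrating; you build it in two steps via the telescoping relation $\SB = \WB - \int_0^x\WB$ followed by integration by parts). For the forward implication you invoke the L'H\^opital variant in which only the denominator need diverge, while the paper invokes Hardy's Lemma~\ref{lemma:hardy} (``$f+f'\to 0\Rightarrow f\to 0$''); these two devices are equivalent, as Hardy's lemma is essentially a repackaging of exactly this version of L'H\^opital applied to $e^{x}f(x)/e^{x}$. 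So the mathematical substance is the same.

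There is, however, a genuine gap in the converse direction. Your engine identity, and hence your rearranged key identity, presuppose that both power series $\WB(A_\bullet)(x)$ and $\SB(A_\bullet)(x)$ converge for every $x>0$. When you start from weak Borel summability, convergence of $\WB$ is given, and your telescoping relation transfers convergence to $\SB$, so the forward implication is clean. But when you start from strong Borel summability (the second sentence of the theorem), you have convergence of $\SB$ and must \emph{argue} that $\WB$ also converges for all $x>0$ before you are entitled to write $e^{-y}\WB(A_\bullet)(y)$ at all. This is precisely where the paper uses Hardy's Lemma~\ref{lemma:transform}, applied entrywise, and you never state or prove an analogous fact. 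Your parenthetical that $\WB$ and $\SB$ are ``matrix-valued entire functions'' is conclusion, not justification: entirety follows once one knows the series converge on $(0,\infty)$, which is exactly the thing that must be shown. Adding an appeal to Lemma~\ref{lemma:transform} (or independently proving that termwise convergence of $\SB$ forces that of $\WB$, e.g.\ via the Volterra equation $\WB(x)=\SB(x)+\int_0^x\WB$) would close the gap.
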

For easy reference, we reproduce two lemmas used in the proof of \cite[Theorem~122]{hardy}.
\begin{lemma}[Hardy]\label{lemma:hardy}
    Let $f:\mathbb{R}\to \mathbb{C}$ be differentiable. If $\lim_{x\to \infty}f(x)+f'(x)= 0$, then $\lim_{x\to \infty} f(x)= 0$.
\end{lemma}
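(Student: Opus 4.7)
The plan is to use the integrating factor $e^x$ to rewrite $f$ in terms of the quantity $f+f'$ that we control. Define $g(x) = e^x f(x)$, which is differentiable with $g'(x) = e^x\bigl(f(x)+f'(x)\bigr)$. The hypothesis $\lim_{x\to\infty}\bigl(f(x)+f'(x)\bigr)=0$ then says exactly that $e^{-x}g'(x)\to 0$, so for any fixed $\varepsilon>0$ we may pick $M\in\mathbb{R}$ with $\lvert f(t)+f'(t)\rvert<\varepsilon$ for all $t>M$.

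Next, apply the fundamental theorem of calculus to $g$ on $[M,x]$, which (working componentwise in the real and imaginary parts if one wishes) gives
\[
 e^x f(x) \;=\; e^M f(M) \;+\; \int_M^x e^t\bigl(f(t)+f'(t)\bigr)\,dt.
\]
Dividing by $e^x$ and taking absolute values yields
\[
 \lvert f(x)\rvert \;\le\; e^{M-x}\lvert f(M)\rvert \;+\; e^{-x}\!\int_M^x e^t\,\varepsilon\,dt \;\le\; e^{M-x}\lvert f(M)\rvert + \varepsilon\bigl(1-e^{M-x}\bigr).
\]
Letting $x\to\infty$ with $M$ fixed, the first term vanishes and the second tends to $\varepsilon$, so $\limsup_{x\to\infty}\lvert f(x)\rvert\le\varepsilon$. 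Since $\varepsilon>0$ is arbitrary, this forces $\lim_{x\to\infty}f(x)=0$.

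There is no real obstacle here: the only subtlety is that $f$ is complex-valued, but this is harmless since $\lvert\,\cdot\,\rvert$ in $\mathbb{C}$ satisfies the triangle inequality used above, and the fundamental theorem of calculus applies to complex-valued differentiable functions of a real variable by treating real and imaginary parts separately. One could alternatively phrase the argument via L'H\^opital's rule applied to $e^x f(x)/e^x$ with the $\infty/\infty$ form in the denominator (again splitting into real and imaginary parts), but the direct integrating-factor estimate is the cleanest and most self-contained route.
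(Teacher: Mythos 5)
The paper does not actually prove this lemma: it states it (together with Lemma~\ref{lemma:transform}) as a reproduction from Hardy's \emph{Divergent Series} \cite{hardy} ``for easy reference,'' and relies on Hardy's text for the proof. So your proposal cannot be compared against an in-paper proof; it stands or falls on its own, and it is essentially correct. The integrating-factor computation, the bound
\[
\lvert f(x)\rvert \le e^{M-x}\lvert f(M)\rvert + \varepsilon\bigl(1-e^{M-x}\bigr),
\]
and the $\limsup$ argument are all standard and sound. This is the same idea as Hardy's L'H\^opital argument (applied to $e^x f(x)/e^x$), just written out via the fundamental theorem of calculus rather than the Cauchy mean value theorem.

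One small point worth tightening: the lemma assumes only that $f$ is differentiable, not that $f'$ is continuous, so $g'(t)=e^t\bigl(f(t)+f'(t)\bigr)$ need not be Riemann integrable, and the version of the fundamental theorem of calculus you invoke requires some care. The fix is cheap: on $[M,x]$ you have $\lvert g'(t)\rvert\le e^{x}\varepsilon$, so $g$ is Lipschitz (hence absolutely continuous) there and the Lebesgue form of the fundamental theorem applies; alternatively, the L'H\^opital route you mention sidesteps integrability entirely, since the $\infty/\infty$ version needs only existence of the derivatives near $\infty$ and is proved by the Cauchy mean value theorem. Either remark closes the gap, and the rest of the argument, including the treatment of complex values by separating real and imaginary parts, is fine.
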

\begin{lemma}[Hardy]\label{lemma:transform}
Let $a_\bullet \in s(\mathbb{C})$. The series $\WB (a_\bullet)(x)$ is conventionally summable for all $x > 0$ if and only if $\SB (a_\bullet)(x)$ is conventionally summable for all $x > 0$.
\end{lemma}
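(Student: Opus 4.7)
The plan is to reduce Lemma~\ref{lemma:transform} to a Cauchy--Hadamard statement about the growth of two numerical sequences. For any scalar sequence $c_\bullet \in s(\mathbb{C})$, the series $\sum_{k=0}^\infty c_k x^k/k!$ converges for every $x > 0$ if and only if it has infinite radius of convergence, i.e.\ $\limsup_{k\to\infty}(|c_k|/k!)^{1/k} = 0$; equivalently, for every $\varepsilon > 0$ there exists $K$ with $|c_k| \le \varepsilon^k k!$ for all $k \ge K$. Applying this to $c_k = s_k = \sum_{j=0}^k a_j$ and to $c_k = a_k$ turns the lemma into the comparison: the $\varepsilon$-characterization holds for $s_\bullet$ if and only if it holds for $a_\bullet$.

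For the forward direction, assume $\WB(a_\bullet)(x)$ is summable for every $x > 0$, so for each $\varepsilon > 0$ there is $K$ with $|s_k| \le \varepsilon^k k!$ whenever $k \ge K$. Using $a_k = s_k - s_{k-1}$,
\[
    |a_k| \le |s_k| + |s_{k-1}| \le \varepsilon^k k! + \varepsilon^{k-1}(k-1)! = \varepsilon^k k!\Bigl(1 + \tfrac{1}{k\varepsilon}\Bigr)
\]
for $k \ge \max(K,1)$. Taking $k$-th roots and $\limsup$ gives $\limsup_k (|a_k|/k!)^{1/k} \le \varepsilon$, and since $\varepsilon > 0$ was arbitrary the $\limsup$ is $0$. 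Hence $\SB(a_\bullet)(x)$ is summable for every $x > 0$.

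For the converse, assume $\SB(a_\bullet)(x)$ is summable for every $x > 0$, so for each $\varepsilon \in (0,1)$ there is $K$ with $|a_k| \le \varepsilon^k k!$ for $k \ge K$. I would split $|s_k| \le M + \sum_{j=K}^k \varepsilon^j j!$ with $M = \sum_{j<K} |a_j|$; the key observation is that the ratio of consecutive terms $\varepsilon^{j+1}(j+1)!/(\varepsilon^j j!) = \varepsilon(j+1)$ tends to $\infty$, so summing the tail backwards from $j = k$ yields a geometric bound $\sum_{j=K}^k \varepsilon^j j! \le 2\varepsilon^k k!$ for $k$ sufficiently large. Since $M/k!$ decays faster than any $\varepsilon^k$ (by Stirling, $\varepsilon^k k! \to \infty$), this gives $|s_k|/k! \le 3\varepsilon^k$ for large $k$, hence $\limsup_k(|s_k|/k!)^{1/k} \le \varepsilon$, and letting $\varepsilon \to 0$ finishes the argument. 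The main obstacle is this elementary-but-fiddly bound on $\sum_{j=K}^k \varepsilon^j j!$ with constant independent of $k$; an alternative slicker route would exploit the integral identity $\sum_k s_k x^k/k! = \sum_k a_k x^k/k! + e^x\!\int_0^x e^{-t}\sum_k a_k t^k/k!\,dt$ (which one derives by solving $B' - B = A'$ with $B(0)=a_0$) to transport entire-function behaviour between the two sides, but the direct $\limsup$ comparison is more elementary.
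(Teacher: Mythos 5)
The paper does not prove this lemma; it attributes it to Hardy and quotes it without proof as an ingredient in Theorem~\ref{thm:borels}, so there is no in-paper argument to compare against, but your proof is correct and worth having. The Cauchy--Hadamard reduction is exactly right: convergence of $\sum_k c_k x^k/k!$ for all $x>0$ is equivalent to $\limsup_k(|c_k|/k!)^{1/k}=0$, and the lemma is the assertion that $a_\bullet$ and its partial sums $s_\bullet$ satisfy this entire-growth condition simultaneously. The forward direction via $|a_k|\le|s_k|+|s_{k-1}|$ is clean (with the cosmetic fix $k\ge K+1$ so that the bound also applies to $s_{k-1}$). For the converse, your backward-geometric idea for $\sum_{j=K}^k\varepsilon^j j!$ works; to be precise, pick $N$ with $\varepsilon(j+1)\ge 2$ for $j\ge N$, whence $\sum_{j=N}^k\varepsilon^j j!\le 2\varepsilon^k k!$ by summing backward, absorb the finitely many terms $K\le j<N$ into the constant $M$, and note $M=o(\varepsilon^k k!)$ since $\varepsilon^k k!\to\infty$; this gives $|s_k|/k!\le C\varepsilon^k$ eventually for some constant $C$, and $C^{1/k}\to 1$, so $\limsup_k(|s_k|/k!)^{1/k}\le\varepsilon$. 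The integral identity you mention, $\WB(a_\bullet)(x)=\SB(a_\bullet)(x)+e^x\!\int_0^x e^{-t}\,\SB(a_\bullet)(t)\,dt$ obtained from solving $B'-B=A'$ with $B(0)=a_0$, is indeed the cleaner route --- it is the scalar case of the paper's own equation~\eqref{eq:weak_vs_strong} --- and it disposes of both implications at once, since $B=A+e^x\!\int_0^x e^{-t}A\,dt$ is entire whenever $A$ is and, inverting, $A=B-\int_0^x B\,dt$ is entire whenever $B$ is.
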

\begin{proof}[Proof of Thoerem~\ref{thm:borels}]
Suppose $\sum_{k=0}^\infty A_k \overeq{WB} S$. Then $\WB(A_\bullet)(x)$ is conventionally summable for all $x > 0$. Applying Lemma~\ref{lemma:transform} entrywise shows that $\SB(A_\bullet)(x)$ is conventionally summable for all $x > 0$. Taking derivative,
\[
    \SB(A_\bullet)'(x) = \sum_{k=0}^\infty A_{k+1} \frac{x^k}{k!} \quad \text{and} \quad \WB(A_\bullet)'(x) = \sum_{k=0}^\infty S_{k+1} \frac{x^k}{k!}.
\]
Therefore,
\begin{align*}
e^{-x} \WB(A_\bullet)(x) - A_0 &= \int_{0}^x \frac{d}{dt} \bigl( e^{-t} \WB(A_\bullet)(t) \bigr)\, dt = \int_{0}^x e^{-t} \bigl( \WB(A_\bullet)'(t) - \WB(A_\bullet)(t) \bigr)\, dt\\
&= \int_{0}^x \sum_{k=0}^\infty (S_{k+1}-S_k) \frac{e^{-t}t^k}{k!}\, dt = \int_{0}^x \sum_{k=0}^\infty A_{k+1} \frac{e^{-t} t^k}{k!}\, dt\\
&=\int_{0}^x e^{-t} \SB(A_\bullet)'(t) \,dt = e^{-x} \SB(A_\bullet)(x) - A_0 + \int_{0}^x e^{-t} \SB(A_\bullet)(t)\, dt.
\end{align*}
Rearranging terms,
\begin{equation}\label{eq:weak_vs_strong}
    e^{-x} \WB(A_\bullet)(x) = \int_{0}^x e^{-t} \SB(A_\bullet)(t)\, dt + e^{-x} \SB(A_\bullet)(x).
\end{equation}
Taking limit $x\to \infty$, the left-hand side gives the weak Borel sum while the first term on the right-hand side gives the strong Borel sum. Since
\begin{align*}
\lim_{x\to\infty} e^{-x} \WB(A_\bullet)(x) &= \lim_{x\to\infty} \biggl( \int_{0}^x e^{-t} \SB(A_\bullet)(t)\, dt + e^{-x} \SB(A_\bullet)(x) \biggr)\\
&= \lim_{x\to\infty} \biggl( \int_{0}^x e^{-t} \SB(A_\bullet)(t)\, dt + \frac{d}{dx} \int_{0}^x e^{-t} \SB(A_\bullet)(t)\, dt \biggr)= S, 
\end{align*}
we may apply Lemma~\ref{lemma:hardy} entrywise so that
\[
    \lim_{x\to \infty} e^{-x} \SB(A_\bullet)(x) = 0 \quad \text{and} \quad  \lim_{x\to\infty} \int_{0}^x e^{-t} \SB(A_\bullet)(t)\, dt =S.
\]

Suppose $\sum_{k=0}^\infty A_k \overeq{SB} S$. Then $\SB(A_\bullet)(x)$ is conventionally summable for all $x > 0$ and so is $\WB(A_\bullet)(x)$ by applying Lemma~\ref{lemma:transform} entrywise. By \eqref{eq:weak_vs_strong},
\begin{align*}
\lim_{x\to \infty} e^{-x} \WB(A_\bullet)(x) &= \lim_{x\to \infty} \int_{0}^x e^{-t} \SB(A_\bullet)(t)\, dt + \lim_{x\to \infty} e^{-x} \SB(A_\bullet)(x) \\
&= S + \lim_{x\to \infty} e^{-x} \SB(A_\bullet)(x).
\end{align*}
Hence $\sum_{k=0}^\infty A_k \overeq{WB} S$ if and only if $\lim_{x\to \infty} e^{-x} \SB(A_\bullet)(x) = 0. $
\end{proof}
As we mentioned at the end of Section~\ref{sec:euler}, both Borel methods generalize the Euler methods.
\begin{theorem}
Let $A_\bullet = (A_k)_{k=0}^\infty \in s(\mathbb{C}^{d \times d})$ and $P \in \mathbb{C}^{d \times d}$ be such that $P \succ 0$. If $\sum_{k=0}^\infty A_k \doubleovereq{E}{P} S$, then $\sum_{k=0}^\infty A_k \overeq{WB} S$ and $\sum_{k=0}^\infty A_k \overeq{SB} S$.
\end{theorem}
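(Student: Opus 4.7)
The plan is to establish strong Borel summability first via an integral representation of $(I+P)^{-n-1}$, and then deduce weak Borel summability using the identity relating $\WB$ and $\SB$ already derived in the proof of Theorem~\ref{thm:borels}.

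For the strong Borel sum, since $P \succ 0$, the matrix Gamma integral $(I+P)^{-n-1} = \frac{1}{n!}\int_0^\infty y^n e^{-(I+P)y}\,dy$ is valid (it reduces to the usual scalar identity on each eigenvalue of $I+P$). Substituting this into $\E_n^P(A_\bullet) = \sum_{k=0}^n \binom{n}{k}(I+P)^{-n-1}P^{n-k}A_k$, summing over $n$, and interchanging the sum with the integral (justified by the assumed absolute convergence of $\sum_n \E_n^P(A_\bullet) = S$ together with the entire nature of $\SB(A_\bullet)$), I regroup the inner double sum by setting $j = n - k$. The ordering places $P^j$ to the left of $A_k$, so the double sum factors as $\sum_j (yP)^j/j! \cdot \sum_k y^k A_k/k! = e^{yP}\SB(A_\bullet)(y)$. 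Since $e^{-(I+P)y}$ and $e^{yP}$ are both functions of $P$ and hence commute, their product equals $e^{-y}I$, yielding $\int_0^\infty e^{-y}\SB(A_\bullet)(y)\,dy = S$ and hence $\sum A_k \overeq{SB} S$.

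For the weak Borel sum, equation~\eqref{eq:weak_vs_strong} in the proof of Theorem~\ref{thm:borels} states $e^{-x}\WB(A_\bullet)(x) = \int_0^x e^{-y}\SB(A_\bullet)(y)\,dy + e^{-x}\SB(A_\bullet)(x)$, so it suffices to show $\lim_{x\to\infty}e^{-x}\SB(A_\bullet)(x) = 0$. To this end I establish the shift identity $\E_n^P(A_\bullet^+) = (I+P)\E_{n+1}^P(A_\bullet) - P\E_n^P(A_\bullet)$, where $A_\bullet^+ \coloneqq (A_{k+1})_{k\ge 0}$; this is a short direct calculation using $\binom{n+1}{k} - \binom{n}{k} = \binom{n}{k-1}$. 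Summing over $n\ge 0$ telescopes to $\sum_n \E_n^P(A_\bullet^+) = (I+P)(S - \E_0^P(A_\bullet)) - PS = S - A_0$, since $\E_0^P(A_\bullet) = (I+P)^{-1}A_0$. Hence $A_\bullet^+$ is Euler summable to $S - A_0$, and the $\SB$-result applied to $A_\bullet^+$ yields $\int_0^\infty e^{-y}\SB(A_\bullet^+)(y)\,dy = S - A_0$. Since $\SB(A_\bullet^+)(y) = \SB(A_\bullet)'(y)$, integration by parts gives $\int_0^x e^{-y}\SB(A_\bullet)'(y)\,dy = e^{-x}\SB(A_\bullet)(x) - A_0 + \int_0^x e^{-y}\SB(A_\bullet)(y)\,dy$; letting $x\to\infty$ forces $\lim_{x\to\infty}e^{-x}\SB(A_\bullet)(x) = 0$, and weak Borel summability to $S$ follows.

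The main obstacle is identifying the shift identity for $\E_n^P$ and using it to propagate Euler summability to the shifted sequence $A_\bullet^+$; without such a device there is no clear route from the hypothesis to $\lim_{x\to\infty}e^{-x}\SB(A_\bullet)(x) = 0$. Once the shift identity is in hand, every remaining step is either integration by parts or a Fubini-type interchange that is routine given the absolute convergence of $\sum_n \E_n^P(A_\bullet)$ and the entireness of $\SB(A_\bullet)$.
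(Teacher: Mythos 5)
Your proof is a genuinely different route from the paper's: you aim to establish $\sum A_k \overeq{SB} S$ first via the Gamma-integral identity $(I+P)^{-n-1} = \frac{1}{n!}\int_0^\infty y^n e^{-(I+P)y}\,dy$ and a Fubini interchange, and then derive weak Borel summability from the shift identity $\E_n^P(A_\bullet^+) = (I+P)\E_{n+1}^P(A_\bullet) - P\E_n^P(A_\bullet)$ together with integration by parts. The paper instead proves weak Borel summability directly --- from the power-series identity $e^{-x}\WB(A_\bullet)(x) = e^{-(I+P)x}\sum_{k}\frac{(I+P)^k x^k}{k!}Z_k$ with $Z_k$ the partial Euler sums and a standard Borel-type limiting argument --- and then gets strong Borel summability for free from Theorem~\ref{thm:borels}, which says $\mathsf{WB}\Rightarrow\mathsf{SB}$ unconditionally. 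Your shift identity, the telescoping computation $\sum_n \E_n^P(A_\bullet^+)\to S-A_0$, and the integration-by-parts deduction of $\lim_{x\to\infty}e^{-x}\SB(A_\bullet)(x)=0$ are all correct and rather elegant.

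The gap is in the strong-Borel step, specifically the interchange of $\sum_n$ and $\int_0^\infty$. You justify it by ``the assumed absolute convergence of $\sum_n \E_n^P(A_\bullet) = S$,'' but Euler summability is defined as \emph{conventional} convergence of $\sum_n \E_n^P(A_\bullet)$; absolute convergence is not given and does not follow. Even granting absolute convergence, Fubini would require finiteness of $\sum_{n}\sum_{k=0}^n \frac{1}{k!(n-k)!}\int_0^\infty y^n\,\lVert e^{-(I+P)y}\rVert\,\lVert P\rVert^{n-k}\lVert A_k\rVert\,dy$. The best a priori growth bound available from Euler summability is $\lVert A_k\rVert \le C(1+2\lVert P\rVert)^k$ (obtained by inverting $T_n=(I+P)^{n+1}\E_n^P(A_\bullet)$ and using boundedness of the transforms), and with $\lVert e^{-(I+P)y}\rVert \le e^{-(1+\lambda_{\min}(P))y}$ the double sum reduces to a geometric series of ratio $(1+3\lVert P\rVert)/(1+\lambda_{\min}(P))\geq 1$, which diverges. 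So the naive Fubini argument fails, and ``entireness of $\SB(A_\bullet)$'' alone does not rescue integrability on $[0,\infty)$. The step can be repaired --- write $\int_0^x e^{-y}\SB(A_\bullet)(y)\,dy = \sum_n G_n(x)\,\E_n^P(A_\bullet)$ for finite $x$ with $G_n(x) := (I+P)^{n+1}\int_0^x \frac{y^n}{n!}e^{-(I+P)y}\,dy$, then verify that these incomplete-gamma weights satisfy the hypotheses of Theorem~\ref{thm:functionalCharacterization} as $x\to\infty$ --- but that is substantially more work than the one-line appeal you make, and the paper's architecture (prove $\mathsf{WB}$ first, then invoke $\mathsf{WB}\Rightarrow\mathsf{SB}$) avoids the issue altogether.
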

\begin{proof}
Let $S_n = \sum_{k=0}^n A_k$, $n\in \mathbb{N}$. By definition of Euler summability, $\sum_{k=0}^\infty A_k \doubleovereq{E}{P} S$ if and only if $\lim_{n\to \infty} Z_n = S$ where
\[
    Z_n = \sum_{k=0}^n \E_k^P(A_\bullet) = (I+P)^{-n-1}\sum_{k=0}^n \binom{n+1}{k+1} P^{n-k} S_k.
\]
Then
\begin{align*}
e^{Px} \sum_{k=0}^\infty S_k \frac{x^k}{k!} &= \Biggl[ \sum_{k=0}^\infty \frac{(Px)^k}{k!} \Biggr] \Biggl[ \sum_{k=0}^\infty S_k \frac{x^k}{k!} \Biggr] = \sum_{k=0}^\infty \biggl[ \frac{S_k}{k!} + \frac{P S_{k-1}}{(k-1)!} + \frac{P^2 S_{k-2}}{(k-2)! 2!} + \dots + \frac{P^k S_0}{k!} \biggr] x^k\\
&= \sum_{k=0}^\infty \frac{(I+P)^k x^k}{k!} Z_k.
\end{align*}
Thus,
\begin{equation}\label{eq:Borel-eq}
    e^{-x} \sum_{k=0}^{\infty} \frac{x^k}{k!} S_k = e^{-(I+P)x} \sum_{k=0}^\infty \frac{(I+P)^k x^k}{k!} Z_k.
\end{equation}
Weak Borel summability follows as
\begin{align*}
     \Bigl\|\lim_{x \to \infty} e^{-x} \sum_{k=0}^{\infty} \frac{x^k}{k!}S_k - S \Bigr\| &=\Bigl\| \lim_{x \to \infty} e^{-(I+P)x}\sum_{k=0}^\infty \frac{(I+P)^k x^k}{k!} Z_k - \lim_{x \to \infty} e^{-(I+P)x}\sum_{k=0}^\infty \frac{(I+P)^k x^k}{k!} S \Bigr\|\\
    & \le \lim_{x \to \infty} e^{-(I+P)x}\sum_{k=0}^\infty \frac{(I+P)^k x^k}{k!} \lVert Z_k - S \rVert = 0. 
\end{align*}
By Theorem~\ref{thm:borels}, we obtain $\sum_{k=0}^\infty A_k \overeq{SB} S$.
\end{proof}
As before we will use the Neumann series as our basic test case. The proof below sheds further light on how Borel summation generalize Euler summation.
\begin{proposition}[Borel summability of Neumann series]\label{prop:borneu}
For $X\in \mathbb{C}^{d\times d}$, the following are equivalent:
\begin{enumerate}[\normalfont(i)]
    \item $\sum_{k=0}^\infty X^k \overeq{WB} (I-X)^{-1}$; \label{BorelNeumann1}
    \item $\sum_{k=0}^\infty X^k \overeq{SB} (I-X)^{-1}$; \label{BorelNeumann2}
    \item $\lambda(X) \subseteq \{z \in \mathbb{C} :  \re(\lambda) < 1 \}$. \label{BorelNeumann3}
\end{enumerate}
\end{proposition}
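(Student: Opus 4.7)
The plan is to route everything through the strong Borel transform, which has a very clean closed form for the Neumann series, and then use Theorem~\ref{thm:borels} to transfer the conclusion to the weak Borel method.

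First I would compute
\[
\SB(X_\bullet^{\phantom{k}})(x) = \sum_{k=0}^\infty X^k \frac{x^k}{k!} = e^{Xx},
\]
so that the candidate strong Borel sum is
\[
\int_0^\infty e^{-x} e^{Xx}\, dx = \int_0^\infty e^{(X-I)x}\, dx.
\]
The next step is a spectral analysis of this matrix-valued integral. Using a Jordan decomposition $X = W J W^{-1}$ and the fact that $e^{(X-I)x} = W e^{(J-I)x} W^{-1}$, the convergence of the integral reduces to the convergence of the entrywise integrals of $e^{(J_\lambda - I)x}$ for each Jordan block $J_\lambda$. Entries of $e^{(J_\lambda - I)x}$ are polynomials in $x$ times $e^{(\lambda-1)x}$ (cf.\ \eqref{eq:Jordan}), and these are integrable on $(0,\infty)$ if and only if $\re(\lambda) < 1$. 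Conversely, if some eigenvalue $\lambda$ of $X$ satisfies $\re(\lambda) \ge 1$, the corresponding diagonal entry of $e^{(J_\lambda - I)x}$ does not vanish at infinity and its improper integral diverges, obstructing conventional summability of $\int_0^x e^{-t}\SB(X_\bullet)(t)\, dt$ as $x \to \infty$. When $\re(\lambda) < 1$ for every $\lambda \in \lambda(X)$, the standard identity $\int_0^\infty e^{Mt}\, dt = -M^{-1}$ for $M$ with spectrum in the open left half-plane (applied to $M = X - I$) gives
\[
\int_0^\infty e^{(X-I)x}\, dx = -(X - I)^{-1} = (I-X)^{-1}.
\]
This establishes the equivalence $\text{\ref{BorelNeumann2}} \Leftrightarrow \text{\ref{BorelNeumann3}}$.

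For the link between the two Borel methods I would invoke Theorem~\ref{thm:borels}: the weak Borel implication \ref{BorelNeumann1} $\Rightarrow$ \ref{BorelNeumann2} is automatic, while the converse holds exactly when $\lim_{x\to\infty} e^{-x}\SB(X_\bullet)(x) = 0$. Since $e^{-x}\SB(X_\bullet)(x) = e^{(X-I)x}$, the same Jordan-block computation as above shows this limit is $0$ precisely when $\re(\lambda) < 1$ for all $\lambda \in \lambda(X)$, i.e., under condition \ref{BorelNeumann3}. Chaining these equivalences yields \ref{BorelNeumann1} $\Leftrightarrow$ \ref{BorelNeumann2} $\Leftrightarrow$ \ref{BorelNeumann3}.

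The only mildly technical point I anticipate is the careful justification that convergence of the matrix improper integral and the vanishing of the matrix exponential at infinity really do reduce to the scalar eigenvalue condition $\re(\lambda) < 1$; this is standard once one passes to the Jordan form and observes that polynomial prefactors from the nilpotent part are dominated by the exponential decay $e^{(\re(\lambda) - 1)x}$ whenever $\re(\lambda) < 1$, and cannot be tamed otherwise. Everything else is either a direct computation with $\SB(X_\bullet)(x) = e^{Xx}$ or a routine invocation of Theorem~\ref{thm:borels}.
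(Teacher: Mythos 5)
Your proof is correct, and the strong Borel half (the equivalence of \ref{BorelNeumann2} and \ref{BorelNeumann3}) proceeds exactly as in the paper: compute $\SB(X_\bullet)(x)=e^{Xx}$, reduce $\int_0^\infty e^{(X-I)x}\,dx$ to an antiderivative and a limit at infinity, and pass to eigenvalues via the Jordan form (the paper uses the spectral mapping identity $\lambda(e^{X-I})=\{e^{\lambda-1}:\lambda\in\lambda(X)\}$ together with $\lim_{n\to\infty}A^n=0\iff\lambda(A)\subseteq\mathbb{D}$ rather than writing out Jordan blocks, but this is cosmetic).

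Where you genuinely diverge from the paper is in handling the weak Borel condition \ref{BorelNeumann1}. The paper proves \ref{BorelNeumann1} $\Leftrightarrow$ \ref{BorelNeumann3} by a second, independent explicit computation: it first disposes of $1\in\lambda(X)$ by computing $\lim_{x\to\infty}e^{-x}\sum_k x^k/(k-1)!=+\infty$, and for $1\notin\lambda(X)$ expands $S_k=(I-X)^{-1}(I-X^{k+1})$ to evaluate $e^{-x}\WB(X_\bullet)(x)$ in closed form. You instead route through Theorem~\ref{thm:borels}: \ref{BorelNeumann1} $\Rightarrow$ \ref{BorelNeumann2} is automatic, and the converse holds precisely when $e^{-x}\SB(X_\bullet)(x)=e^{(X-I)x}\to 0$, which you have already characterized by \ref{BorelNeumann3}. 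Chaining gives all three equivalences at once. This is a cleaner reduction because the only spectral analysis needed is the single limit $\lim_{x\to\infty}e^{(X-I)x}$, which you had to do anyway for the strong Borel integral; the paper's separate weak Borel computation becomes redundant. One small point worth tightening: when you say the improper integral ``diverges'' for $\re(\lambda)\ge 1$, note that for $\re(\lambda)=1$, $\lambda\ne 1$, the scalar integral $\int_0^T e^{(\lambda-1)x}\,dx$ stays bounded but oscillates without a limit, so ``fails to converge'' is the precise claim; this does not affect the conclusion.
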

\begin{proof}
Let $\lambda(X) =\{\lambda_1,\dots, \lambda_r\}$. We show that \ref{BorelNeumann2} and \ref{BorelNeumann3} are equivalent. If $1\in \lambda(X)$, then the integral
\[
    \int_{0}^{\infty} e^{-x} \sum_{k=0}^{\infty} \frac{(xX)^k}{k!} \, dx
\]
is divergent, so the Neumann series $\sum_{k=0}^\infty X^k$ is not strongly Borel summable. If $1\notin \lambda(X)$, then
\begin{equation}\label{eq:SB_Neumann}
    \int_{0}^{\infty} e^{-x} \sum_{k=0}^{\infty} \frac{(xX)^k}{k!} \, dx = \int_{0}^{\infty} e^{x(X-I)} \, dx= (X-I)^{-1} \Bigl( \lim_{x \to \infty} e^{x(X-I)} -I \Bigr),
\end{equation}
so the Neumann series is strongly Borel summable to $(I-X)^{-1}$ if and only if
\begin{equation}\label{eq:eto0}
     \lim_{n \to \infty} e^{n(X-I)} = 0.
\end{equation}
As $\lim_{n \to \infty} A^n = 0$ if and only if $\lambda(A) \subseteq \mathbb{D}$; $\lvert e^{\lambda-1} \rvert  <1$ if and only if $\re(\lambda)<1$; and $\lambda(e^{X-I}) = \{e^{\lambda_1-1},\dots, e^{\lambda_r-1}\}$; we deduce that \eqref{eq:eto0} holds if and only if \ref{BorelNeumann3} holds.

We next show that \ref{BorelNeumann1} and \ref{BorelNeumann3} are equivalent. The geometric series $\sum_{k=0}^\infty \lambda^k$ is not weakly Borel summable at $\lambda=1$ as
\[
    \lim_{x \to \infty} e^{-x}\sum_{k=0}^\infty \frac{x^k}{(k-1)!} = \lim_{x \to \infty} e^{-x}\frac{d}{dx} (xe^x) = \lim_{x \to \infty} 1+x = + \infty.
\]
So the Neumann series is not weakly Borel summable if $1\in \lambda(X)$. If $1\notin \lambda(X)$, then
\begin{align*}
    \lim_{x\to \infty} e^{-x}\sum_{k=0}^\infty (I-X)^{-1} (I-X^{k+1} ) \frac{x^k}{k!} 
    &= \lim_{x\to \infty} e^{-x}(I-X)^{-1} \sum_{k=0}^\infty (I-X^{k+1} ) \frac{x^k}{k!} \\
    &= \lim_{x\to \infty} e^{-x}(I-X)^{-1} (e^x I - Xe^{xX} )\\
    &= (I-X)^{-1} - \lim_{x\to \infty}Xe^{x(X-I)}.
\end{align*}
As in the case of \eqref{eq:eto0}, the last limit is zero if and only if \ref{BorelNeumann3} holds.
\end{proof}
In this context, Borel summation may be viewed as a limiting case of Euler summation as $\rho \to \infty$: By Corollary~\ref{cor:euneu}, $\sum_{k=0}^\infty X^k \doubleovereq{E}{\rho} (I-X)^{-1}$ if and only if $\lambda(X) \subseteq \{ z \in \mathbb{C} : \lvert z + \rho \rvert < 1 + \rho \}$; and
\[
    \bigcup_{\rho>0} \{ z \in \mathbb{C} : \lvert z + \rho \rvert < 1+\rho\} = \{z \in \mathbb{C} :  \re(\lambda)< 1\}.
\] 

\section{From theory to computations}\label{sec:compute}

In principle, every summation method discussed in Sections~\ref{sec:seq} and \ref{sec:func} yields a numerical method for summing a matrix series. But in reality issues related to rounding errors  will play an important role and have to be carefully treated.  We will discuss these in the next two sections after making some observations.

The theoretical results in Sections~\ref{sec:seq} and \ref{sec:func} are all about convergence (whether a method converges or diverges) but say nothing about the \emph{rate} of convergence. Readers familiar with the matrix functions literature \cite{funcofmat} may think that  convergence rates should be readily obtainable but this is an illusion---the matrix series appearing in the matrix functions literature are invariably \emph{power} or \emph{Taylor series}, whereas the series appearing in Sections~\ref{sec:seq} and \ref{sec:func} can be any arbitrary matrix series---Fourier, Dirichlet, Hadamard powers, etc.

For special matrix series whose $k$th term takes a simple fixed form like $X^k$, $\sin (kX)$, $\exp(X \log k)$, $X^{\circ k}$, there is some hope of deriving a `remainder' that gives the convergence rate but even that may be a difficult undertaking. For an arbitrary matrix series $\sum_{k=0}^\infty A_k$, where the $k$th term can be any matrix $A_k \in \mathbb{C}^{d \times d}$, such `remainder' do not generally exist even when it is a scalar series \cite{boos, hardy, peyerimhoff69, borel}.

To sum an arbitrary matrix series of complete generality, we may thus only assume that the truncated sum $\sum_{k=0}^n A_k \approx \sum_{k=0}^\infty A_k$ is ascertained to be a good approximation through some other means and that $n$ is given as part of the input---there is no `remainder' that allows one to estimate $n$ a priori. We will have more to say about this issue in Section~\ref{sec:gen}, where we will also discuss matrix adaptations of compensated summation \cite{KahanSum}, block and mixed block summations \cite{HighamBlanchard}, methods that were originally developed for sums of scalars.

The special case of Taylor or power series, i.e., where $A_k = c_k A^k$, deserves special attention because of their central role in matrix functions \cite{funcofmat}. In Section~\ref{sec:pow}, we discuss how one may adapt the Pad\'e approximation \cite{scaling-squaring} and Schur--Parlett \cite{Parlett} algorithms to work with any of the regular sequential summation methods $\mathsf{R}$ in Section~\ref{sec:seq}, i.e., compute the $\mathsf{R}$-sum $S \overeq{R} \sum_{k=0}^\infty c_k A^k$ for a given $A \in \mathbb{C}^{d \times d}$ using these algorithms.

Henceforth we assume the standard model for floating-point arithmetic \cite[Section~2.2]{Higham_Accuracy}:
\[
    \fl(a \ast  b) = (a \ast  b)(1+\delta),\quad   \abs{\delta}\le \ur,\quad \ast   \in \{+, -, \times, \div \},
\]
with $\fl(a)$ the computed value of $a\in \mathbb{R}$ in floating-point arithmetic and $\ur$  the unit roundoff.  For any computations in floating-point arithmetic involving more than a single operation, we denote  by $\widehat{S}$ the final computed output of a quantity $S$. This is to avoid having to write, say, $\fl(a + \fl(b + \fl (c + d)))$, for the output of $a+b +c+d$, unless it is strictly necessary (like in Algorithm~\ref{alg:comp}).

\section{Accurate and fast numerical summation}\label{sec:gen}

In this section there will be no loss of generality in restricting our discussions to $\mathbb{R}$, since complex addition is performed separately for real and imaginary parts as real additions. As we alluded to in Section~\ref{sec:compute}, for a general matrix series $S = \sum_{k=0}^\infty A_k$ where $A_k$ has no special form, computing it means to approximate $S$ up to some desired $\varepsilon$-accuracy by a partial sum $S_n = \sum_{k=0}^n A_k$, i.e., with $\lVert S_n - S \rVert < \varepsilon$. There are two considerations in choosing $n \in \mathbb{N}$.

Firstly, for a given $\varepsilon > 0$, the value of $n$ depends on the summation method we choose. This is in fact an important motivation for the summation methods in Sections~\ref{sec:seq} and \ref{sec:func}, namely, they often require a smaller $n$ to achieve the same $\varepsilon$-accuracy. For example, take any scalar alternating series $\sum_{k=0}^\infty a_k = s$ that is conventionally summable; it is known \cite{Rosser} that for 
\[
    \biggl \lvert s -\sum_{k=0}^{n_1} a_k \biggr \rvert < \varepsilon, \qquad     \biggl \lvert s -\sum_{k=0}^{n_2} \E_k^1(a_\bullet) \biggr \rvert < \varepsilon,
\]
we  need only $n_2 < n_1$ terms. Here $\E_k^1(a_\bullet)$ is the $1$-Euler transform as defined in \eqref{eq:Aq}. In other words, Euler summation gets us to the same $\varepsilon$-accuracy with fewer terms than conventional summation. This advantage extends to series of matrices, as we will see with the Neumann series in Section~\ref{sec:acc}.

Secondly, for a fixed choice of summation method and a fixed $\varepsilon > 0$, the value of $n$ is highly sensitive to the order of summation and termination criteria. This is already evident in conventional summation of scalar series $s = \sum_{k=0}^\infty a_k$. Clearly we could not rely on $\lvert s_n - s \rvert = \lvert \sum_{k=n+1}^\infty a_k \rvert <\varepsilon$ as a termination criterion since the value of $s$ is precisely what needs to be determined.

Suppose  we use $\lvert a_k \rvert< \varepsilon$ (using $\lvert a_k \rvert/\lvert s_k \rvert< \varepsilon$ would not make much of a difference) as termination criterion with  $\varepsilon = 10^{-6}$ and we use the geometric series with $a_k = 2^{-k}$ for illustration since we know $s = 2$. A straightforward summation algorithm is given by setting $s \leftarrow a_0 = 1/2$ and iteratively computing
\begin{equation}\label{eq:naive}
    s \leftarrow s + a_k \quad \text{for} \quad k=1, \dotsc, n,
\end{equation}
until $\lvert a_n \rvert < 10^{-6}$, which gives the correct answer $\widehat{s} = 2$ in single precision. However, if we apply the same algorithm to what is essentially the same series with a single zero added as the first term:
\[
b_k=\begin{cases}
0  & k = 0,\\
a_{k-1} & k \ge 1,
\end{cases}
\]
then although $s = \sum_{k=0}^\infty b_k = \sum_{k=0}^\infty a_k$, the computed sum is now $\widehat{s} = 0$ as it terminates at $n=0$. The bottom line is that there is no universal termination criterion---$n$ has to be ascertained on a case-by-case basis and for a general series we will have to assume that it is given as part of our input. Henceforth we will assume this and our goal is to compute $s \coloneqq s_n = \sum_{k=1}^n a_k$ accurately.

The na\"ive algorithm in \eqref{eq:naive} is called \emph{recursive summation} \cite[Section~4.1]{Higham_Accuracy}. It computes $s$ with an error given by
\begin{equation}\label{eq:recursive}
    \widehat{s}=\sum_{k=0}^n a_k(1+\delta_k), \quad \abs{\delta_k} \le n \ur + O\bigl(\ur^2\bigr).
\end{equation}
The algorithm extends immediately to matrix sums  $S = \sum_{k=1}^n A_k \in \mathbb{R}^{d \times d}$. Since matrix addition is computed entrywise, if we write $s_{ij}$ and $a_{ijk}$ for the $(i,j)$th entry of $S$ and $A_k$ respectively, then \eqref{eq:recursive} generalizes to
\[
    \widehat{s}_{ij}=\sum_{k=0}^n a_{ijk} \bigl(1+ \delta_{ijk} \bigr), \quad \abs{\delta_{ijk}} \le n \ur +O\bigl(\ur^2\bigr), \quad i,j = 1, \dots, d,
\]
or, in terms of the Hadamard product $\circ$ and  writing $\Delta_k \coloneqq (\delta_{ijk})  \in \mathbb{R}^{d \times d}$,
\begin{equation}\label{eq:naive1}
    \widehat{S} =  \sum_{k=0}^n A_k \circ (\mathbbm{1}+\Delta_k ), \quad \abs{\delta_{ijk}} \le n \ur+O\bigl(\ur^2\bigr), \quad i,j = 1, \dots, d.
\end{equation}
Since for $A,B \in \mathbb{R}^{d \times d}$,
\begin{equation}\label{eq:Hadamard}
    \norm{A \circ B}\le \norm{A} \max_{i,j=1,\dots,d} \lvert b_{ij} \rvert,
\end{equation}
we obtain the forward error bound
\[
    \norm{S-\widehat{S}} \le \sum_{k=0}^n \norm{A_k\circ \Delta_k} \le n \ur \sum_{k=0}^n \norm{A_k} +O\bigl(\ur^2\bigr) .
\]
This serves as a baseline bound---we will discuss three more accurate summation methods that can significantly reduce the coefficient $n \ur$ to $O(\sqrt{n}\ur)$, $O(\ur)$, and even $O(\ur^2) $.

For a scalar series, a simple strategy  \cite[Section~4.2]{Higham_Accuracy} to improve accuracy of \eqref{eq:naive} is to reorder the summands in increasing magnitudes to minimize the rounding error at each step. Note that this does not work for matrix series since there is no natural total order on $\mathbb{R}^{d \times d}$ and reordering often improves the accuracy of one entry at the expense of decreased accuracy in another.

\subsection{Block summation algorithm}\label{sec:block}

Assume without loss of generality that  $b \in \mathbb{N}$ divides $n+1$. The block summation algorithm \cite[Section~2.2]{HighamBlanchard} in Algorithm~\ref{alg:block} modifies recursive summation \eqref{eq:naive} by dividing the sum into blocks of size $b$. In particular, it allows the block sums to be computed in parallel.

\begin{algorithm}
\caption{Block summation}\label{alg:block}
\begin{algorithmic}[1]
\Require $A_0,\dotsc, A_n \in \mathbb{R}^{d \times d}$, block size $b$;
\For{$ k = 1,\dotsc,(n+1)/b$}
    \State compute $S_i=\sum_{k=(i-1) b}^{i b-1} A_k$ with recursive summation \eqref{eq:naive};
\EndFor
\State compute $S = \sum_{i=1}^{(n+1)/b} S_i$ with the recursive summation \eqref{eq:naive};
\Ensure $S$.
\end{algorithmic}
\end{algorithm}
%\FloatBarrier

As in our discussion of \eqref{eq:naive}, it is straightforward to extend \cite[Equation~2.4]{HighamBlanchard} to a sum of matrices: Algorithm~\ref{alg:block} satisfies
\[
    \widehat{S} =  \sum_{k=0}^n A_k \circ (\mathbbm{1}+\Delta_k ), \quad \abs{\delta_{ijk}} \le \Bigl(b+\frac{n+1}{b}-2 \Bigr) \ur +O\bigl(\ur^2\bigr),\quad  i,j = 1, \dots, d,
\]
with notations as in \eqref{eq:naive1}. By \eqref{eq:Hadamard}, we obtain the forward error bound for Algorithm~\ref{alg:block}:
\[
    \norm{S-\widehat{S}} \le \Bigl(b+\frac{n+1}{b}-2 \Bigr) \ur \sum_{k=0}^n \norm{A_k}+O\bigl(\ur^2\bigr).
\]
The optimal bound $2\sqrt{n+1}-2$ is easily seen to be attained with $b=\sqrt{n+1}$ although in practice it is common to choose $b$ to be a constant such as $128$ or $256$.

The parallelism in Algorithm~\ref{alg:block} requires summands to be independent and may be lost in situations like computing a matrix polynomial $\sum_{k=0}^n c_k A^k$ with Horner's method (Algorithm~\ref{alg:horner}).

\subsection{Compensated summation algorithm}\label{sec:comp}

This is also known as Kahan summation \cite{KahanSum} and is based on a clever exploitation of the floating point system. By observing that the rounding error in a floating-point addition of two matrices is itself a floating-point matrix, Algorithm~\ref{alg:comp} simply approximates this error with a correction term $C \in \mathbb{R}^{d \times d}$ at each step of recursive summation to adjust the computed sum.

\begin{algorithm}
\caption{Compensated summation}\label{alg:comp}
\begin{algorithmic}[1]
\Require $A_0,\dotsc, A_n \in \mathbb{R}^{d \times d}$;
\State initialize $S \leftarrow 0$, $C \leftarrow 0$;
\For{$ k = 0,\dotsc, n$}
    \State $Y \leftarrow \fl(A_k - C)$;
    \State $T \leftarrow \fl(S + Y)$;
    \State $C \leftarrow \fl(\fl(T - S) - Y)$;
    \State $S \leftarrow T$;
\EndFor
\Ensure $S$.
\end{algorithmic}
\end{algorithm}
%\FloatBarrier

Since the rounding error in floating point arithmetic is, by definition,  the unit round-off $\ur$, a straightforward matrix adaptation of  \cite[Equation~4.8]{Higham_Accuracy} for Algorithm~\ref{alg:comp} yields
\begin{equation}\label{eq:compensated_bound}
    \widehat{S} =  \sum_{k=0}^n A_k \circ (\mathbbm{1}+\Delta_k), \quad \abs{\delta_{ijk}} \le 2\ur+O\bigl(\ur^2\bigr), \quad i,j = 1, \dots, d,
\end{equation}
with notations as in \eqref{eq:naive1}. By \eqref{eq:Hadamard}, we obtain the forward error bound for Algorithm~\ref{alg:comp}:
\[
    \norm{S-\widehat{S}} \le 2\ur \sum_{k=0}^n \norm{A_k}+O\bigl(\ur^2\bigr).
\]
Remarkably, Algorithm~\ref{alg:comp} eliminates $n$ from the error bound. This enhanced accuracy is achieved at the cost of three extra matrix additions per loop, and is often more expensive than simply switching to higher precision \cite{HighamBlanchard}. So compensated summation  is usually deployed only when computations are already taking place at the highest available precision.

\subsection{Mixed block summation algorithm}

Assume without loss of generality that  $b \in \mathbb{N}$ divides $n+1$.  Algorithm~\ref{alg:mix} is a variant of  Algorithm~\ref{alg:block} that strikes a balance between a fast algorithm \textsc{FastSum} and an accurate algorithm \textsc{AccurateSum}.

\begin{algorithm}
\caption{Mixed block summation} \label{alg:mix}
\begin{algorithmic}[1]
\Require $A_0,\dotsc, A_n \in \mathbb{R}^{d \times d}$, block size $b$, \textsc{FastSum}, \textsc{AccurateSum};
\For{$ k = 1,\dotsc,(n+1)/b$}
    \State compute $S_i=\sum_{k=(i-1) b}^{i b-1} A_k$ with \textsc{FastSum};
\EndFor
\State compute $S = \sum_{i=1}^{(n+1)/b} S_i$ with \textsc{AccurateSum};
\Ensure $S$.
\end{algorithmic}
\end{algorithm}

When $b=1$, Algorithm~\ref{alg:mix} is exactly \textsc{AccurateSum} and when $b=n+1$, Algorithm~\ref{alg:mix} is exactly \textsc{FastSum}. The scalar version of this algorithm was proposed by Blanchard, Higham, and Mary in \cite{HighamBlanchard} and we merely adapted it for matrices. The following corollary of \cite[Theorem~3.1]{HighamBlanchard} follows from the same arguments used in Sections~\ref{sec:block} and \ref{sec:comp}. Recall that we write $\Delta_k \coloneqq (\delta_{ijk})  \in \mathbb{R}^{d \times d}$.

\begin{corollary}[Error bound of mixed block summation algorithm]
Let the sum computed with \textsc{FastSum} satisfy
\[
    \widehat{S}^{\method{F}} =  \sum_{k=0}^n A_k^{\method{F}} \circ \bigl(\mathbbm{1}+\Delta_k^{\method{F}} \bigr), \quad \abs{\delta^{\method{F}}_{ijk}} \le  \varepsilon^{\method{F}}(n), \quad i,j = 1, \dots, d,
\]
and likewise for \textsc{AccurateSum} with \textsf{A} in place of \textsf{F} in the superscript. Then the sum computed  with Algorithm~\ref{alg:mix} satisfies 
\[
    \widehat{S} =  \sum_{k=0}^n A_k \circ (\mathbbm{1}+\Delta_k ), \quad \abs{\delta_{ijk}} \le  \varepsilon(n,b) \coloneqq \varepsilon^{\method{F}}(b)+\varepsilon^{\method{A}} (n / b)+\varepsilon^{\method{F}} (b) \varepsilon^{\method{A}} (n / b), \quad i,j = 1, \dots, d,
\]
and thus
\[
\norm{S-\widehat{S}} \le \varepsilon(n,b)\sum_{k=0}^n \norm{A_k}.
\]
\end{corollary}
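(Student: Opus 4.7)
The plan is to just chain the two per-algorithm perturbation identities together, exactly mirroring the scalar argument in \cite[Theorem~3.1]{HighamBlanchard} but using the entrywise Hadamard-factor formulation from \eqref{eq:naive1} and \eqref{eq:compensated_bound}. First I would fix notation by writing $N \coloneqq (n+1)/b$ for the number of blocks and, for each block index $i = 1,\dots, N$, apply the assumed bound for \textsc{FastSum} to the block $(A_{(i-1)b}, \dots, A_{ib-1})$, obtaining a computed local sum
\[
\widehat{S}_i \;=\; \sum_{k=(i-1)b}^{ib-1} A_k \circ \bigl(\mathbbm{1}+\Delta_k^{\method{F}}\bigr),\qquad \bigl\lvert \delta^{\method{F}}_{ijk} \bigr\rvert \le \varepsilon^{\method{F}}(b).
\]

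Next I would apply the assumed bound for \textsc{AccurateSum} to the sequence $(\widehat{S}_1,\dots,\widehat{S}_N)$, producing
\[
\widehat{S} \;=\; \sum_{i=1}^{N} \widehat{S}_i \circ \bigl(\mathbbm{1}+\Delta^{\method{A}}_i\bigr),\qquad  \bigl\lvert \delta^{\method{A}}_{ij,i} \bigr\rvert \le \varepsilon^{\method{A}}(n/b).
\]
Substituting the first display into the second and pulling the outer Hadamard factor inside the block sum gives, for every $k$ in the block with index $i(k) \coloneqq \lceil (k+1)/b\rceil$,
\[
\widehat{S} \;=\; \sum_{k=0}^{n} A_k \circ \bigl(\mathbbm{1}+\Delta^{\method{F}}_k\bigr) \circ \bigl(\mathbbm{1}+\Delta^{\method{A}}_{i(k)}\bigr).
\]
Here I am using the fact that the Hadamard product is commutative and associative on matrices of matching size, so the two perturbation matrices can be combined entrywise.

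The key step is then the elementary identity $(1+\alpha)(1+\beta) = 1 + (\alpha+\beta+\alpha\beta)$ applied entrywise, which yields $\widehat{S} = \sum_{k=0}^n A_k \circ (\mathbbm{1}+\Delta_k)$ with entries
\[
\delta_{ijk} \;=\; \delta^{\method{F}}_{ijk} + \delta^{\method{A}}_{ij,i(k)} + \delta^{\method{F}}_{ijk}\,\delta^{\method{A}}_{ij,i(k)},
\]
and the triangle inequality immediately gives $\lvert \delta_{ijk}\rvert \le \varepsilon^{\method{F}}(b) + \varepsilon^{\method{A}}(n/b) + \varepsilon^{\method{F}}(b)\varepsilon^{\method{A}}(n/b) = \varepsilon(n,b)$. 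Finally, the norm bound follows from the triangle inequality for $\lVert\,\cdot\,\rVert$ and the submultiplicative-type estimate \eqref{eq:Hadamard}:
\[
\lVert S - \widehat{S}\rVert \le \sum_{k=0}^n \lVert A_k \circ \Delta_k\rVert \le \sum_{k=0}^n \lVert A_k\rVert \max_{i,j}\lvert \delta_{ijk}\rvert \le \varepsilon(n,b)\sum_{k=0}^n \lVert A_k\rVert.
\]

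There is no real obstacle here; the only mild point of care is that the index $i(k)$ for the outer \textsc{AccurateSum} factor depends only on the block containing $k$ (so the outer factor is constant across each block), which is exactly what lets the two Hadamard factors multiply cleanly entry by entry without introducing cross-block coupling. Everything else is a bookkeeping exercise, so the proof can be presented in a few lines.
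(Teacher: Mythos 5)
Your proof is correct and is essentially the argument the paper has in mind: the paper simply remarks that the corollary "follows from the same arguments used in Sections~\ref{sec:block} and~\ref{sec:comp}" (i.e.\ apply the entrywise Hadamard-perturbation model to the inner \textsc{FastSum} blocks, then to the outer \textsc{AccurateSum}, and compose via $(1+\alpha)(1+\beta)=1+\alpha+\beta+\alpha\beta$), which is precisely what you have written out. The only cosmetic issue is that you overload $i$ as both the block index and the first matrix-entry index in $\delta^{\method{A}}_{ij,i}$; using a distinct symbol for the block index would avoid confusion, but the meaning is clear and the argument is sound.
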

In particular, if \textsc{AccurateSum} is calculated in double precision, i.e., $\varepsilon^{\method{A}}(n) = O(\ur^2)$, then the error bound is $\varepsilon(n,b) = \varepsilon^{\method{F}}(b)+O(\ur^2)$. Various options for the subroutines \textsc{AccurateSum} and \textsc{FastSum} are discussed in \cite{HighamBlanchard}.

\section{Summing matrix power series}\label{sec:pow}

Unlike the general matrix series considered in the last section, matrix power series admit more efficient algorithms. They  are also intimately connected to the study of matrix functions \cite{funcofmat}. The benefit of this connection goes both ways---the algorithms used to evaluate matrix functions, notably Pad\'e approximation and Schur--Partlett algorithm, may be adapted to implement the summation methods in Sections~\ref{sec:seq} and \ref{sec:func} numerically; the summation methods in Sections~\ref{sec:seq} and \ref{sec:func} may in turn be used to enhance these algorithms and to extend the domains of matrix functions.

For these purposes, the following basic definition of a matrix function \cite{funcofmat} suffices: If $X \in \mathbb{C}^{d \times d}$ and the power series
$f(z) = \sum_{k=0}^\infty a_k (z - z_0)^k$ converges in a neighborhood of $z_0 \in \mathbb{C}$, then 
\[
    f(X) \coloneqq \sum_{k=0}^\infty a_k (X - z_0 I)^k
\]
whenever the matrix power series on the right is summable in the conventional sense. By definition, the domain of $f$ is confined to
\[
\Omega \coloneqq
\biggl\{X \in \mathbb{C}^{d \times d} : \sum_{k=0}^\infty a_k (X - z_0 I)^k = S \text{ for some } S \in \mathbb{C}^{d \times d} \biggr\}.
\]
With hindsight from Sections~\ref{sec:seq} and \ref{sec:func}, we may define
\[
f(X) \overeq{R} \sum_{k=0}^\infty a_k  (X - z_0 I)^k
\]
with respect to any regular summation method $\mathsf{R}$, extending the domain of $f$ to a potentially larger domain 
\[
\Omega_\mathsf{R} \coloneqq \biggl\{X \in \mathbb{C}^{d \times d} : \sum_{k=0}^\infty a_k (X - z_0 I)^k \overeq{R} S \text{ for some } S \in \mathbb{C}^{d \times d} \biggr\} \supseteq \Omega.
\]
This portends a new vista in the study of matrix functions but any further exploration would take us too far afield.

We will instead limit our attention to the numerics and only to regular sequential summation methods in Section~\ref{sec:seq} as these work hand-in-glove with numerical algorithms for matrix functions. In this regard, there is no loss of generality to assume that $z_0 = 0$. As is the case in Section~\ref{sec:gen}, we begin by approximating $f(X)$ with its truncated Taylor Series $\sum_{k=0}^n a_k X^k$ for some $n \in \mathbb{N}$. But unlike the case of a general matrix series $\sum_{k=0}^\infty A_k$, working with a matrix power series $\sum_{k=0}^n a_k X^k$ permits us to ascertain $n$ in advance to achieve a desired $\varepsilon$-accuracy,
\[
\biggl\lVert f(X) -   \sum_{k=0}^n a_k X^k \biggr\rVert < \varepsilon
\]
as in \cite[Theorem~11.2.4]{GVL} or \cite[Corollary~2]{Roy} (see also \cite[Theorem~4.8]{funcofmat}).

We next see how we may add a summation method to the process. Let $\mathsf{R}$ be a regular sequential summation method \eqref{eq:sequential-transformation} such that $C_{n,k} \in \mathbb{C}^{d \times d}$ for all $n,k \in \mathbb{N}$ and $C_{n,k}=0$ for all $k > n$. The N\"orlund means (with Ces\`aro summation as a special case) in Section~\ref{sec:norlund} and Euler summation methods in Section~\ref{sec:euler} all meet this criterion. For any $A_k \in \mathbb{C}^{d \times d}$, $k \in \mathbb{N}$, and $S_n=\sum_{k=0}^n A_k$, observe that
\begin{equation}\label{eq:BSCA}
    \sum_{k=0}^n C_{n,k}S_k = \sum_{j=0}^n \biggl(\sum_{k=j}^n C_{n,k}\biggr) A_k.
\end{equation}
So for matrix power series the summation is characterized by the sums
\begin{equation}\label{eq:DB}
B_{n,k}\coloneqq \sum_{j=k}^{n} a_k C_{n,j}
\end{equation}
for $k \le n, \; k,n \in \mathbb{N}$.
Let $\varepsilon > 0$. If $\sum_{k=0}^\infty a_kX^k$ is $\mathsf{R}$-summable to $f(X)$, then for some $n \in \mathbb{N}$,
\begin{equation}\label{eq:trunc}
\biggl\lVert f(X) -   \sum_{k=0}^n B_{n,k} X^k \biggr\rVert   = \biggl \lVert  f(X) - \sum_{j=0}^{n}\biggl( \sum_{k=j}^{n} a_j C_{n,k} \biggr) X^j  \biggr \rVert  < \varepsilon.
\end{equation}
Using this, we will generalize Pad\'e approximation and the Schur--Parlett algorithm to work with  N\"orlund means and Euler summation. At this point, truncation error bounds like \cite[Theorem~11.2.4]{GVL} or \cite[Corollary~2]{Roy} that allow one to estimate $n$ from a given $\varepsilon$ are beyond our reach for \eqref{eq:trunc}. We will assume below, as we did in Section~\ref{sec:gen}, that $n$ is furnished as part of our inputs.

\subsection{Pad\'e approximation}\label{sec:pade}

This is one of the most powerful methods in matrix functions computations \cite[Section~4.4.2]{funcofmat}. The \texttt{expm} method in \textsc{Matlab}, which implements the scaling-and-squaring method to compute the matrix exponential  \cite{scaling-squaring}, is testament to one of the greatest wins\footnote{\url{https://blogs.mathworks.com/cleve/2024/01/25/nick-higham-1961-2024/}} of Pad\'e approximation. We will augment it with a regular sequential summation method $\mathsf{R}$.

An $(m,n)$-\emph{Pad\'e approximant} of $f(z) = \sum_{k=0}^\infty a_k z^k $ with respect to $\mathsf{R}$ is a rational function $[p/q](z)$ where $p(z) = \sum_{k=0}^m \beta_k z^k$, $q(z) = \sum_{k=0}^n \gamma_k z^k$, $\gamma_0 = 1$, and 
\begin{equation}\label{eq:pade}
    p(X)q(X)^{-1} = \sum_{k=0}^{m+n} B_{m+n,k} X^k,
\end{equation}
with $B_{m+n,0},B_{m+n,1},\dots,B_{m+n,m+n} \in \mathbb{C}^{d \times d}$ as defined in \eqref{eq:BSCA} and \eqref{eq:DB}. By this definition, the standard Pad\'e approximation in \cite[Section~4.4.2]{funcofmat} is then exactly the Pad\'e approximation with respect to conventional summation. 

Right multiplying $q(X)$ on both side of \eqref{eq:pade}, we get
\[
    \sum_{k=0}^m \beta_kX^k = \sum_{k=0}^{m+n} \biggl(\sum_{j=0}^k \gamma_{k-j} B_{m+n,j}\biggr) X^k.
\]
Since this holds for all $X \in \mathbb{C}^{d \times d}$, we may equate coefficients of $X^k$ on both sides to get
\begin{equation}\label{eq:pade-system}
    \sum_{j=0}^k \gamma_{k-j} B_{m+n,j} = \begin{cases}
        \beta_k I &\text{if } k = 0,\dots, m,\\
        0 &\text{if } k = m+1,\dots, n.
    \end{cases}
\end{equation}
For simplicity, we may choose a summation method $\mathsf{R}$ with $C_{n,k} = c_{n,k} I$ for some $c_{n,k} \in \mathbb{C}$ in \eqref{eq:BSCA} so that $B_{n,k} = b_{n,k}I$ for some $b_{n,k} \in \mathbb{C}$ in \eqref{eq:DB}. This simplification is not overly restrictive as it includes important methods such as Ces\`aro summation and Euler summation with $P = \rho I$ for $\rho >0$. The upside is that the coefficients of $p/q$ may be easily determined by solving for $\beta_k$ and $\gamma_k$ in a system of $m+n+1$ linear equations~\eqref{eq:pade-system}.
We summarize this in Algorithm~\ref{alg:pade}.

\begin{algorithm}[htb]
\caption{Pad\'e approximation with sequential summation} \label{alg:pade}
\begin{algorithmic}[1]
\Require $X \in \mathbb{C}^{d \times d}$, $m, n \in \mathbb{N}$, $a_k, c_{m+n,k} \in \mathbb{C}$ for $k=0,\dots, m+n$;
\For{$k = 0,\dots, m+n$}
    \State compute $b_{m+n,k} = a_k \sum_{j=k}^{m+n} c_{m+n,j} $;
\EndFor
\State solve the linear system \eqref{eq:pade-system} for $\beta_k$ for $k=0,\dots,m$ and  $\gamma_k$ for $k = 0,\dots, n$;
\State compute $P = \sum_{k=0}^m \beta_kX^k$ and $Q = \sum_{k=0}^n \gamma_kX^k$ with Algorithm~\ref{alg:horner}; \label{step:horner}
\Ensure $PQ^{-1}$.
\end{algorithmic}

\end{algorithm}

\begin{algorithm}
\caption{Horner's method}\label{alg:horner}
\begin{algorithmic}[1]
\Require $a_0,\dots,a_n\in \mathbb{R}$, $X \in \mathbb{R}^{d \times d}$;
\State initialize $P \leftarrow X$, $S \leftarrow a_0I + a_1X$;
\For{$ k = 2,\dotsc,n$}
    \State $P \leftarrow PX$;
    \State $S \leftarrow S + a_kP$;
\EndFor
\Ensure $S$.
\end{algorithmic}
\end{algorithm}

Algorithm~\ref{alg:horner} in Algorithm~\ref{alg:pade} (and also in Algorithm~\ref{alg:parlett} later) may be replaced by more sophisticated algorithms for evaluating matrix polynomials such as those in \cite{Paterson} or \cite[Section~4.6.4]{Knuth}, depending on whether one values stability or speed or yet other factors like parallelizability more.

Our approach in Algorithm~\ref{alg:pade} forms the Pad\'e approximant $Y = PQ^{-1}$ in the usual way: solving a linear system with multiple right-hand sides $Q^\tp Z = P^\tp$ and taking $Y = Z^\tp$. While there are alternative approaches via continued fractions and partial fractions, these are not necessarily stabler, as pointed out in  \cite[Section~4.4.3]{funcofmat}. More importantly, while a representation of a Pad\'e approximant in the form $PQ^{-1}$ is readily available through solving the linear system \eqref{eq:pade-system}, the same cannot be said of the other forms. Even for a function as standard as the matrix cosine function, it has been pointed out in \cite[p.~290]{funcofmat} that the Pad\'e approximant has no convenient continued fraction or partial fraction form.

We favor the $PQ^{-1}$ approach for yet a third reason: It is straightforward to incorporate the summation methods in Section~\ref{sec:seq}, as we did in \eqref{eq:pade}. We have added some experiments in Section~\ref{sec:pade-exp} to show how Algorithm~\ref{alg:pade} works in conjunction with Ces\`aro and Euler summations, allowing us to sum a power series in regions far outside its usual range of convergence.

\subsection{Schur--Parlett algorithm}

The `Schur' part of this algorithm is routine: To evaluate $f(X) = \sum_{k=0}^\infty a_k X^k$ for $X \in \mathbb{C}^{d \times d}$, a Schur decomposition $X = QRQ^*$ with unitary $Q \in \mathbb{C}^{d \times d}$ and upper triangular $R \in \mathbb{C}^{d \times d}$ yields $f(X) = Qf(R)Q^*$, thus reducing the problem to computing $f(R)$.

The `Parlett' part of this algorithm is where the innovation lies: partition $R \in \mathbb{C}^{d\times d}$ into an $r \times r$ block matrix $R=(R_{ij})$, $i,j \in 1,\dots, r$, with square diagonal blocks $R_{ii}$, $i = 1, \dots, r$. Parlett \cite{Parlett} observed  that the matrix $F = f(R)$ commutes with $R$; has the same block structure $F = (F_{ij})$; and upon evaluating the diagonal blocks $F_{ii} = f(R_{ii})$, the superdiagonal blocks can be obtained from a system of Sylvester equations
\begin{equation}\label{eq:Parlett}
    R_{ii}F_{ij}-F_{ij}R_{jj} = F_{ii}R_{ij} - R_{ij}F_{jj} + \sum_{k=i+1}^{j-1}(F_{ik}R_{kj}-R_{ik}F_{kj}), \quad 1\leq i< j\leq d.
\end{equation}
The system \eqref{eq:Parlett} is nonsingular if and only if $R_{ii}$ and $R_{jj}$ have no eigenvalue in common \cite[Section~D.14]{funcofmat}. Fortunately this may be guaranteed   \cite{Higham-Schur--Parlett}  by further transforming $R$ into an identically partitioned upper triangular matrix $T= VRV^*$ with some unitary  $V \in \mathbb{C}^{d \times d}$  such that for a fixed $\delta > 0$,
\begin{enumerate}[\normalfont(i)]
    \item $\min\{\lvert \lambda_i - \lambda_j \rvert: \lambda_i \in \lambda(T_{ii}), \lambda_j \in \lambda(T_{jj}), i \neq j\} > \delta$; \label{cond:reorder1}
    \item if the block $T_{ii}$ has dimension greater than $1$, then every $\lambda \in \lambda(T_{ii})$ has a corresponding $\mu \in \lambda(T_{ii})$ with $\mu \neq \lambda$ and $\lvert \lambda - \mu \rvert \leq \delta$. \label{cond:reorder2}
\end{enumerate}
Essentially \ref{cond:reorder1} says that between-block eigenvalues are well separated; and \ref{cond:reorder2} says that within-block eigenvalues are closely clustered.
Since $X = (QV^*) T (QV^*)^*$ and $F = (QV^*) f(T) (QV^*)^*$, we may use $T$ in place of $R$. This additional transformation from $R$ to $T$ carries other numerical advantages \cite[Section~9.3]{funcofmat} in the solution of \eqref{eq:Parlett}.

We augment the Schur--Parlett algorithm with a regular sequential summation method $\mathsf{R}$ by computing the diagonal blocks $F_{ii}$ as $\mathsf{R}$-sums, i.e., 
\begin{equation}\label{eq:FR}
F_{ii} = f(R_{ii}) \approx \sum_{k=0}^n B_{n,k} R_{ii}^k = \sum_{k=0}^n \biggl(\sum_{j=k}^n a_k C_{n,j} \biggr) R_{ii}^k, \quad i = 1, \dots, r,
\end{equation}
where $B_{n,k}$, $C_{n,j} \in \mathbb{C}^{d \times d}$ are as defined in \eqref{eq:BSCA} and \eqref{eq:DB}. Note that the diagonal blocks $R_{ii}$'s in \eqref{eq:FR} would in general have different dimensions for different $i$, which means that the matrices $B_{n,k}$, $C_{n,j}$ would need to have dimensions the same as $R_{ii}$'s and therefore chosen differently for each $i$. While there is no reason why we cannot do this we provide a simple workaround---we just set $C_{n,k} = c_{n,k}I$ for some $c_{n,k} \in \mathbb{C}$ as we did in Section~\ref{sec:pade}.

This simplification in turn constraints us to use $C_{n,k} = c_{n,k} I$ for some $c_{n,k} \in \mathbb{C}$ in \eqref{eq:BSCA} but the result is both dimension-independent and computationally efficient as it only requires computing scalar coefficients $\sum_{k=j}^n a_j c_{n,k}$ as opposed to matrix coefficients. We summarize this in Algorithm~\ref{alg:parlett}.

\begin{algorithm}
\caption{Schur--Parlett algorithm with sequential summation} \label{alg:parlett}
\begin{algorithmic}[1]
\Require $X \in \mathbb{C}^{d \times d}$, $n \in \mathbb{N}$, $a_k, c_{n,k} \in \mathbb{C}$ for $k=0,\dots, n$;
\State compute the Schur decomposition $X = QRQ^*$;
\State compute $T= VRV^*$ with block partition satisfying Conditions~\ref{cond:reorder1} and \ref{cond:reorder2};
\State $R \leftarrow T$; 
\State $Q \leftarrow QV^*$;
\For{$i = 1,\dots, r$}
    \State compute $F_{ii} = \sum_{k=0}^n(\sum_{j=k}^n a_k c_{n,j})R_{ii}^k$ by Algorithm~\ref{alg:horner};
\EndFor
\For{$j = 2,\dots, r$}
    \For{$i = j-1,j-2,\dots,1$}
        \State solve for $F_{ij}$ in the Sylvester equation \eqref{eq:Parlett};
    \EndFor
\EndFor
\Ensure $QFQ^*$.
\end{algorithmic}
\end{algorithm}
%\FloatBarrier
Compared to directly summing \eqref{eq:trunc}, Algorithm~\ref{alg:parlett} dramatically improves computational time, as we will see in Section~\ref{sec:neu}.

\section{Numerical experiments} \label{sec:numer}

We will present numerical experiments to illustrate the use of the summation methods in Sections~\ref{sec:seq} and \ref{sec:func} in conjunction with the numerical algorithms in Sections~\ref{sec:gen} and \ref{sec:pow}. Because of the large number of possible combinations, it is not possible to be exhaustive although we try to present a diverse selection. Our experiments will see four types of matrix series (Taylor, Fourier, Dirichlet, Hadamard); two sequential summations (Ces\`aro and Euler), two functional summations (Borel and Lambert); and three numerical algorithms (Schur--Parlett algorithm, recursive, and compensated summations). Each experiment is designed to showcase a different utility of these methods and algorithms.
\begin{center}
\begin{tabular}{ll}
Section~\ref{sec:gibbs}: &  using Ces\`aro sums to alleviate Gibbs phenomenon in matrix Fourier series; \\
Section~\ref{sec:neu}: &  using Euler and strong Borel sums to extend matrix Taylor series; \\
Section~\ref{sec:acc}: &  using Euler sums for high accuracy evaluation of matrix functions; \\
Section~\ref{sec:pade-exp}: & using Ces\`aro and Euler summations in Pad\'e approximations; \\
Section~\ref{sec:dirichlet}: & using Lambert sums to investigate matrix Dirichlet series;\\
Section~\ref{sec:comp_numerical}: &  using compensated summation for accurate evaluation of Hadamard power series.
\end{tabular}
\end{center}
All experiments are performed with \textsc{Matlab} R2023a in double precision ($\ur = 2^{-52} \approx 2.22 \times 10^{-16}$) arithmetic unless noted otherwise. Plots presented in log scale would all be in base $10$. All codes have been made available at
\begin{quote}
\url{https://github.com/thomasw15/Summing-Divergent-Matrix-Series}.
\end{quote}

\subsection{Avoiding Gibbs phenomenon with Ces\`aro summation}\label{sec:gibbs}

When one attempts to approximate a discontinuous function with its Fourier series, the Fourier approximation inevitably overshoots near a point of discontinuity---the notorious Gibbs phenomenon. The canonical example is given by the square wave function $f : \mathbb{R} \to \mathbb{R}$,
\begin{equation}\label{eq:sq}
    f(x) = \begin{cases}
        1 & 2k\pi\leq x<(2k+1)\pi, \; k\in \mathbb{Z},\\
        -1 & (2k-1)\pi \le x < 2k\pi, \; k\in \mathbb{Z}.
    \end{cases}
\end{equation}
Attempting to approximate $f$ by its $1000$-term Fourier series
\begin{equation}\label{eq:1000}
   f_{1000}(x) = \sum_{k=1}^{1000} \frac{2}{\pi k}\bigl( 1-(-1)^k \bigr) \sin(kx)
\end{equation}
produces the blue curve in Figure~\ref{fig:1dgibbs}, which prominently overshoots near $0$ and $\pm \pi$, the points of discontinuity. A close-up look at the absolute value of the errors in a neighborhood of $x = 0$ further reveals the wildly oscillatory nature of $f_{1000}$, shown in the blue curve in Figure~\ref{fig:1dgibbsabs}.
\begin{figure}[htb]
  \begin{subfigure}{0.49\textwidth}
    \includegraphics[trim={10em 52ex 11em 55ex},clip,width=\textwidth]{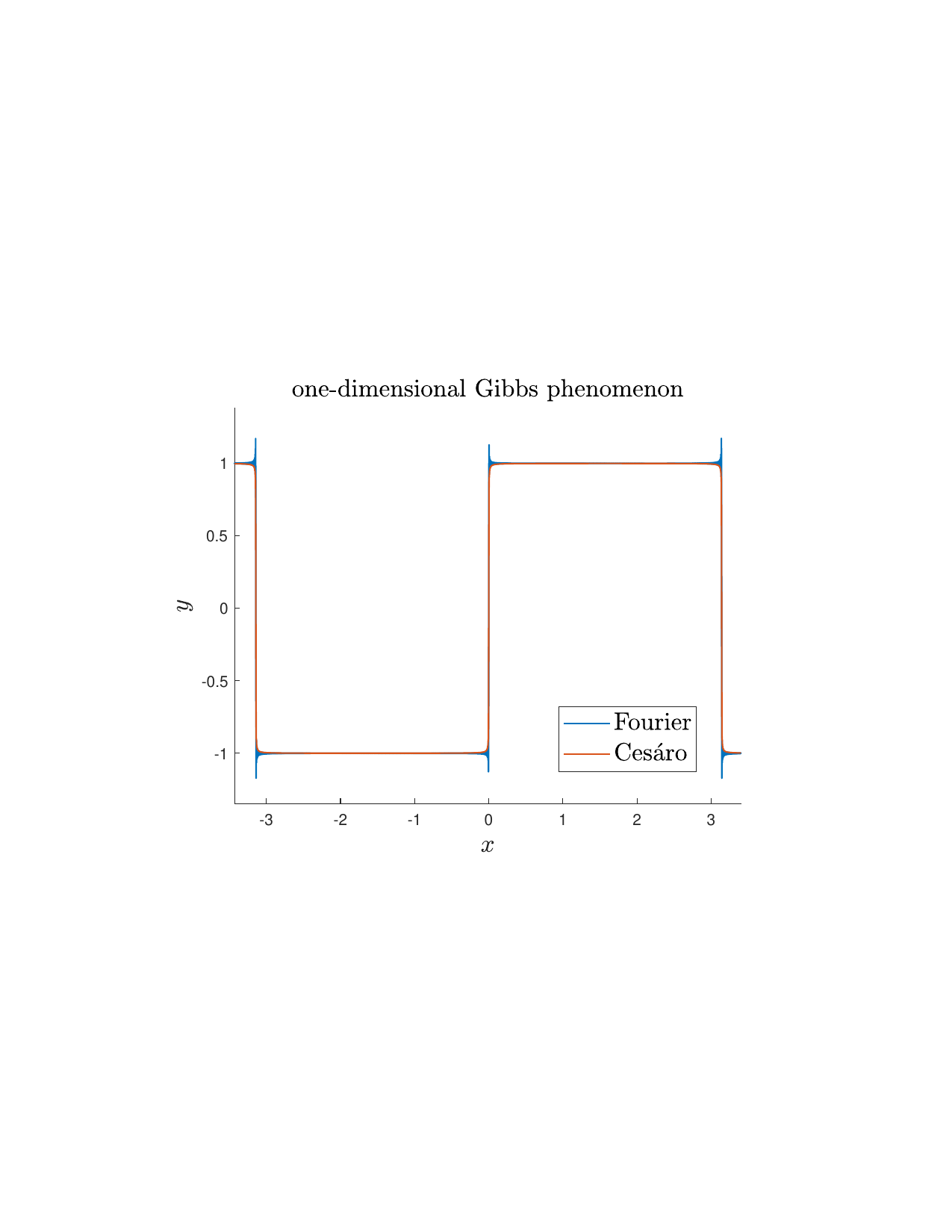}
    \caption{Approximating square wave.} \label{fig:1dgibbs}
  \end{subfigure}%
  \hspace*{\fill} 
  \begin{subfigure}{0.49\textwidth}
    \includegraphics[trim={9.5em 52ex 11em 55ex},clip,width=\textwidth]{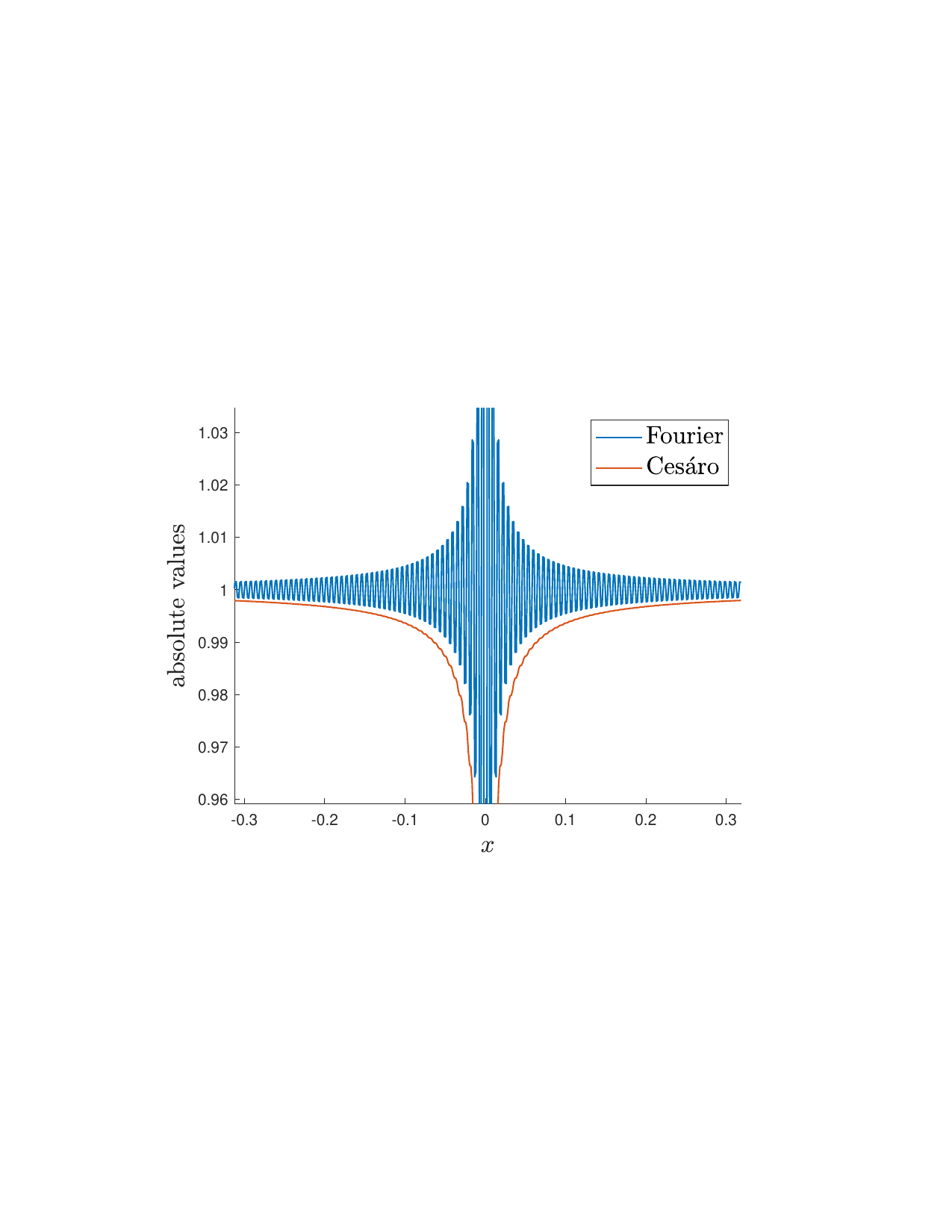}
    \caption{Absolute value of errors near $0$.} \label{fig:1dgibbsabs}
  \end{subfigure}%
\caption{Gibbs phenomena in Fourier series corrected with Ces\`aro sum.} \label{fig:1gibbs}
\end{figure}

While the Gibbs phenomenon may be ameliorated with ad hoc recipes like the Lanczos factor \cite{Acton}, a superior remedy would be to use Ces\`aro summation. The same data in \eqref{eq:1000} yields the Ces\`aro partial sum 
\begin{equation}\label{eq:1000cesaro}
    \sigma_{1000}(x) = \frac{1}{1000}\sum_{n=1}^{999} \sum_{k=1}^n \frac{2}{\pi k}\bigl( 1-(-1)^k \bigr) \sin(kx),
\end{equation}
which nearly eliminates the wild oscillations completely, as shown in the red curves in Figures~\ref{fig:1dgibbs} and \ref{fig:1dgibbsabs}. While this is well known, the following matrix version is new, as far as we know.

Consider the following matrix Fourier series and its corresponding Ces\`aro sum:
\begin{equation}\label{eq:100}
\begin{aligned}
    F_{100}(X(t)) &= \sum_{k=1}^{100} \frac{2}{\pi k}\bigl(1-(-1)^k \bigr) \sin(kX(t)), \\
  \Sigma_{100}(X(t)) &= \frac{1}{99}\sum_{n=1}^{99}\sum_{k=1}^{n} \frac{2}{\pi k}\bigl(1-(-1)^k \bigr) \sin(kX(t))
\end{aligned}
\end{equation}
where $X : \mathbb{R} \to \mathbb{R}^{1000 \times 1000}$ is a continuous matrix-valued function with $\lambda(X(0)) = 0$. Note that each summand involves a matrix sine function \cite[Chapter~12]{funcofmat}, which we compute with the \textsc{Matlab} function \texttt{funm(X,@sin)}.

In a neighborhood of $x = 0$ the square wave function \eqref{eq:sq} is identical to the sign function, i.e., $\sign(x) = 1$ if $x \ge 0$ and $-1$ if $x < 0$. It is therefore conceivable that the same would hold for matrix functions and that the sums in \eqref{eq:100} should approximate the matrix sign function $\sign(X)$ \cite[Chapter~5]{funcofmat} in a neighborhood of $X = 0$.  Surprisingly this is only true if $X$ is diagonalizable and false otherwise, a fact we discovered through the following numerical experiments.

Consider the obviously nondiagonalizable matrix $X(t) = \diag(J_1(t), \dots, J_{100}(t))$ where $J_i: \mathbb{R} \to \mathbb{R}^{10 \times 10}$ is given by
\[
J_i(t) = \begin{bmatrix}
    t \lambda_i &1 & &\\
    &\ddots &\ddots &\\
    & &\ddots &1\\
    & & &t \lambda_i
\end{bmatrix}.
\]
Using the compensated summation in Algorithm~\ref{alg:comp}, we compute the two sums in \eqref{eq:100} and compare their norms with that of the matrix sign function in Figure~\ref{fig:jordan}. 
\begin{figure}[htb]
\includegraphics[trim={10em 51ex 12.3em 52ex},clip,width=0.5\textwidth]{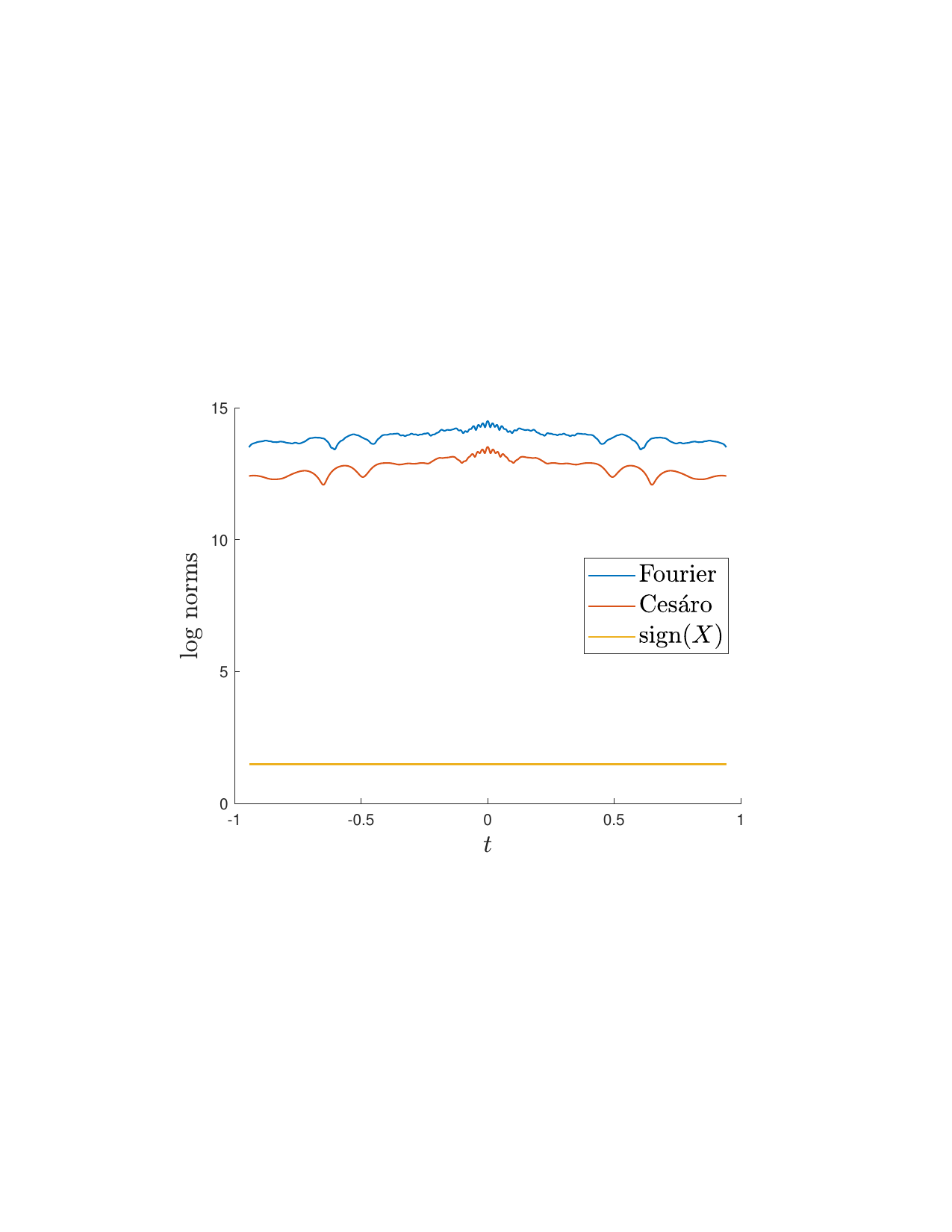}
    \caption{Failure to approximate matrix sign function for nondiagonalizable $X$.} \label{fig:jordan}
\end{figure}

The result shows that the sums in \eqref{eq:100} bear no resemblance to the matrix sign function---both $\lVert F_{100}(X(t))\rVert$ and $\lVert \Sigma_{100}(X(t)) \rVert$ are orders of magnitude away from $\lVert \sign(X(t)) \rVert$. With hindsight, the reason is clear, as the sums in \eqref{eq:100} will always involve the superdiagonal of $1$'s, whereas these play no role in the matrix sign function. While we have chosen $X(t)$ above to accentuate this effect, the argument holds true as long as there is a single Jordan block of size at least $2 \times 2$, i.e., as long as the matrix is not diagonalizable.

On the other hand, the sums in $\eqref{eq:100}$ give a fair approximation of $\sign(X)$ for a diagonalizable matrix $X$ and, as expected, we see prominent Gibbs phenomenon in $F_{100}(X)$ that is alleviated in $\Sigma_{100}(X)$. We will give a symmetric and a nonsymmetric example by randomly generating $\lambda_1,\dots,\lambda_{1000} \in \mathbb{R}$,  orthogonal $Q$ and nonsingular tridiagonal $T \in \mathbb{R}^{1000 \times 1000}$, and defining
\[
Y(t) = Q \diag(t \lambda_1,\dots, t \lambda_{1000})Q^\tp, \qquad Z(t) = T\diag(t \lambda_1,\dots, t \lambda_{1000}) T^{-1}.
\]
We approximate the square wave function with the matrix Fourier series and its Ces\`aro sum in \eqref{eq:100}, with $Y(t)$ and $Z(t)$ in place of $X(t)$, relying again on Algorithm~\ref{alg:comp} to compute the sums.

\begin{figure}[htb]
\begin{subfigure}{0.49\textwidth}
\includegraphics[trim={10em 51ex 12em 55ex},clip,width=\textwidth]{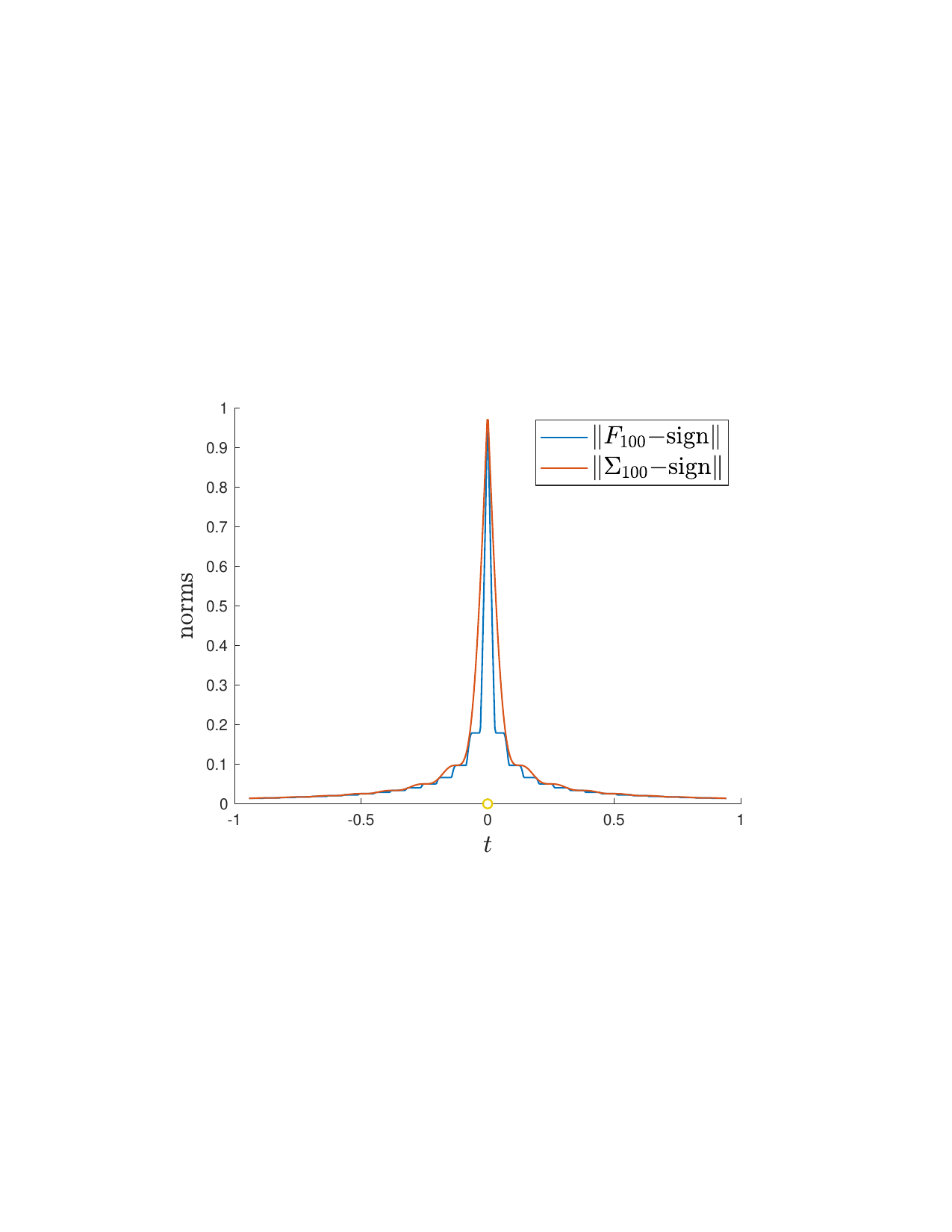}
\caption{symmetric matrices} \label{fig:ortho_norm}
\end{subfigure}%
\begin{subfigure}{0.49\textwidth}
\includegraphics[trim={10em 51ex 12em 55ex},clip,width=\textwidth]{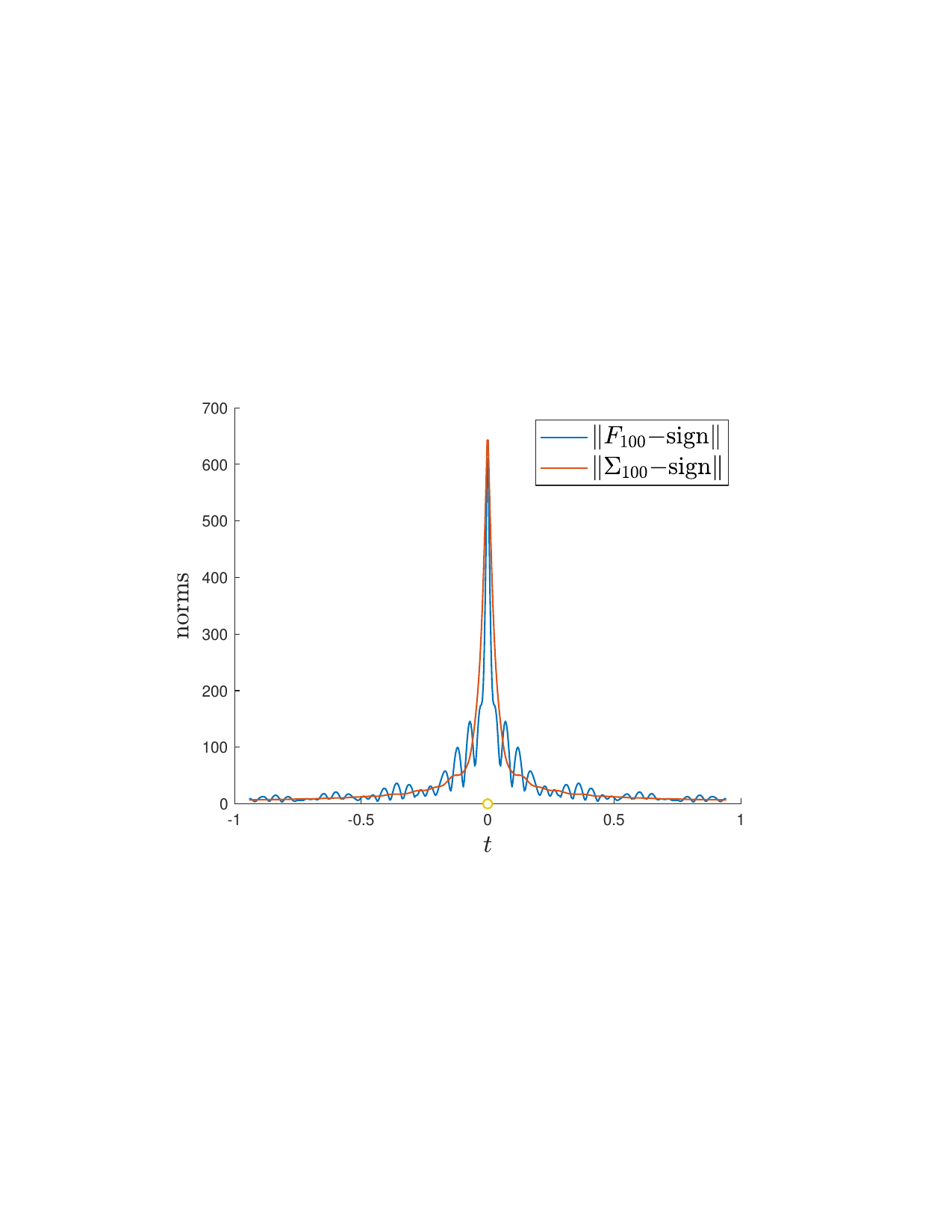}
\caption{nonsymmetric diagonalizable matrices} \label{fig:tri_norm}
\end{subfigure}%
\caption{Matrix sign function approximated by matrix Fourier and Ces\`aro sums.} \label{fig:matrix_Gibbs_norm}
\end{figure}

Outside $t=0$, where the matrix sign function is undefined, both $F_{100}$ and $\Sigma_{100}$ provide fair approximations as quantified by $\lVert F_{100}(Y(t)) - \sign(Y(t))\rVert$ and $\lVert \Sigma_{100}(Y(t)) - \sign(Y(t)) \rVert$ in Figure~\ref{fig:matrix_Gibbs_norm}. We expect the approximation errors to further decrease as the number of terms increases beyond $100$. For comparison the more accurate approximations in Figure~\ref{fig:1dgibbs} for the scalar series took a $1000$-term approximation, which is beyond our reach here for $1000 \times 1000$ matrix series.

\begin{figure}[htb]
\begin{subfigure}{0.49\textwidth}
\includegraphics[trim={10em 51ex 12em 55ex},clip,width=\textwidth]{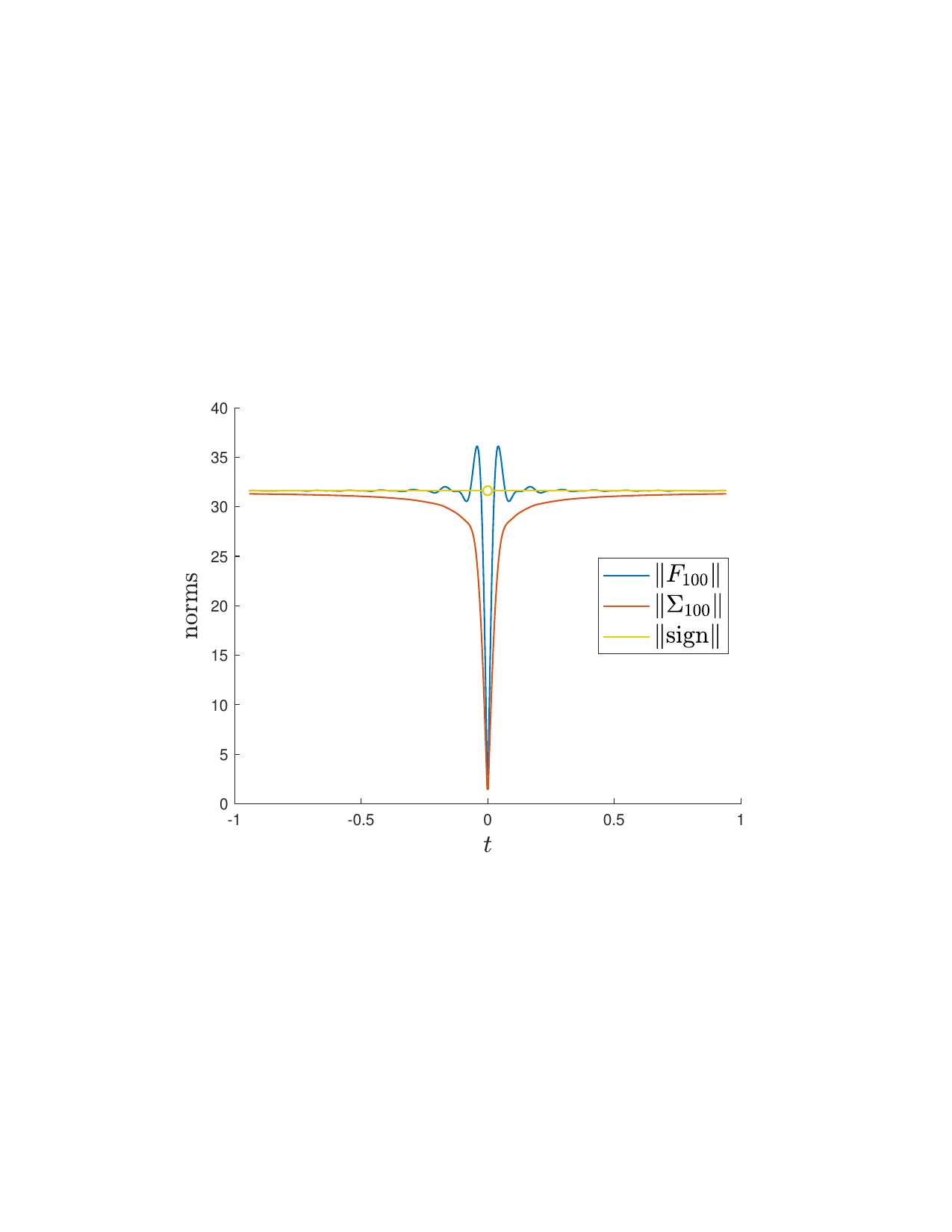}
\caption{symmetric matrices} \label{fig:ortho}
\end{subfigure}%
\begin{subfigure}{0.49\textwidth}
\includegraphics[trim={10em 51ex 12em 55ex},clip,width=\textwidth]{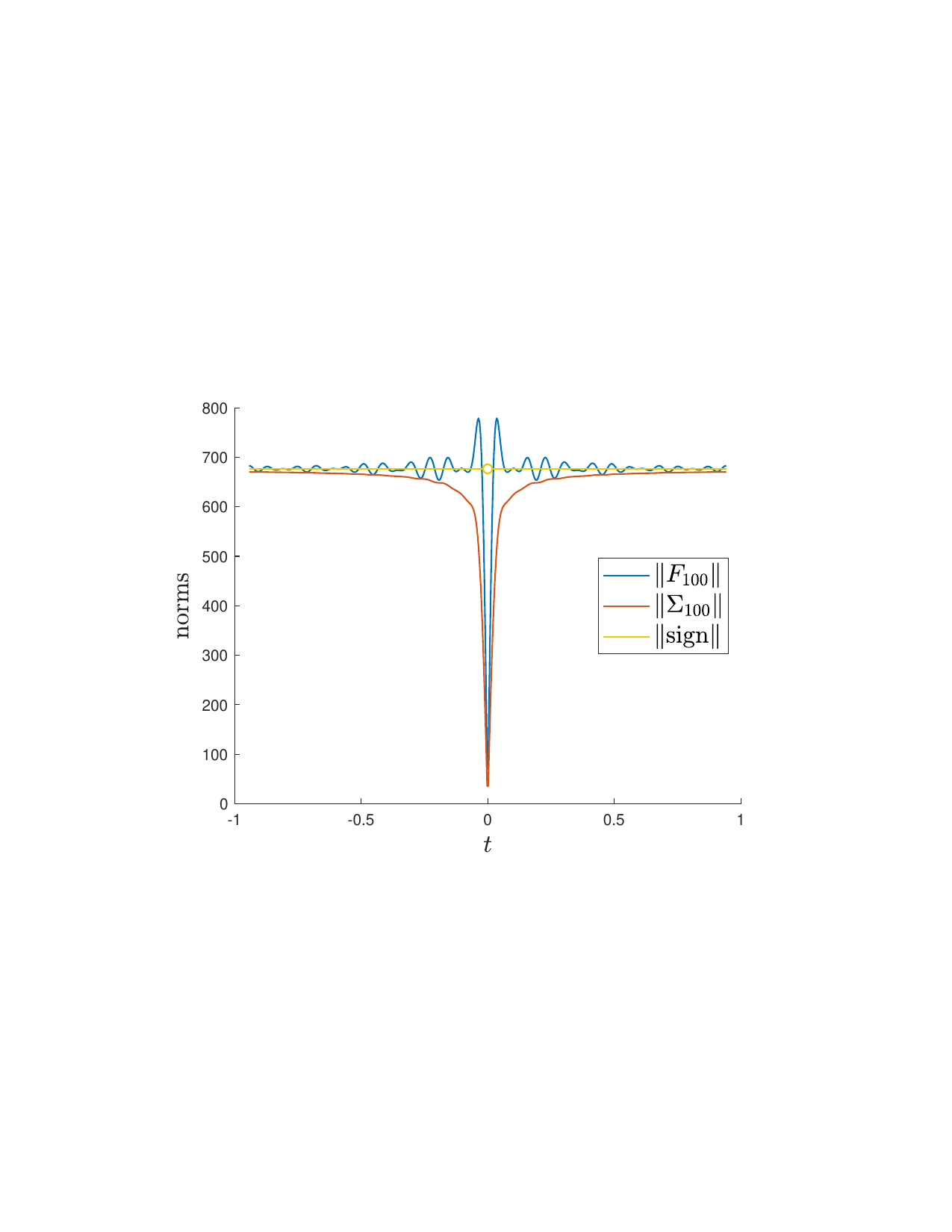}
\caption{nonsymmetric diagonalizable matrices} \label{fig:tri}
\end{subfigure}%
\caption{Gibbs phenomena in matrix Fourier series corrected with Ces\`aro sum.} \label{fig:matrix_Gibbs}
\end{figure}

In a neighborhood of $t = 0$, we see the unmistakable mark of Gibbs phenomenon in $F_{100}$, reflected in the norms of $\lVert F_{100}(Y(t)) \rVert$ and $\lVert F_{100}(Z(t)) \rVert$, the blue curves in Figures~\ref{fig:ortho} and \ref{fig:tri} respectively. The oscillatory behavior vanishes when we instead look at the corresponding Ces\`aro sums $\Sigma_{100}$, whose norms are given by the red curves in Figure~\ref{fig:matrix_Gibbs}. This indicates that for diagonalizable matrices, Ces\`aro summation is a remedy for Gibbs phenomenon in matrix Fourier series.

\subsection{Accurate summation with Euler method and strong Borel method}\label{sec:neu}

These experiments accomplish two goals. We first verify numerically that the Euler and strong Borel methods indeed extend the domain of Neumann series beyond $\mathbb{D}$, which we demonstrated analytically in Corollary~\ref{cor:euneu} and Proposition~\ref{prop:borneu}. The experiments for Euler methods are also used to show that the Schur--Parlett algorithm  for Euler summation, i.e., Algorithm~\ref{alg:parlett} with $c_{n,k} = \binom{n+1}{k+1} \rho^{n-k}(1+\rho)^{-n-1}$, is less accurate but dramatically faster than directly computing with Algorithm~\ref{alg:comp}.

We generate fifty matrices $X\in \mathbb{C}^{1000\times 1000}$ such that $\lambda(X)\subseteq \{ z \in \mathbb{C} : \lvert z + \rho \rvert < 1 + \rho \}$ for $\rho =10^4$ and $	\lambda(X) \not\subseteq \mathbb{D}$. Note that the Neumann series $\sum_{k=0}^\infty X^k$ for such matrices will not be conventionally summable. Our goal is to verify that Euler method and strong Borel method will however yield the expected $(I - X)^{-1}$ numerically.  For Euler method, we compute the truncated $(\mathsf{E},\rho)$ sum as defined in \eqref{eq:eq_neumann},
\[
    \widehat{S}_{\doublemethod{E}{\rho}} \coloneqq \sum_{k=0}^{10000} \E_k^{\rho}(X_\bullet)
\]
where $X_\bullet = (X^k)_{k=0}^\infty$,  first with compensated summation and then with the Schur--Parlett algorithm in single precision $(\ur = 2^{-23} \approx 1.19 \times 10^{-7})$.
For the strong Borel method, we use the \textsc{Matlab} function \texttt{integral} with tolerance level $10^{-12}$ to compute the Borel sum $\widehat{S}_{\method{SB}}$ as in \eqref{eq:SB_Neumann} in single precision.

We plot the forward errors $\norm{\widehat{S} - (I-X)^{-1}}$ in Figure~\ref{fig:neuf} and the backward errors $\norm{\widehat{S}(I-X)-I}$ in Figure~\ref{fig:neub}, where $\widehat{S}$ is either $\widehat{S}_{\doublemethod{E}{\rho}}$ or $\widehat{S}_{\method{SB}}$. The near zero errors are strong numerical evidence that both Euler and strong Borel methods analytically extend the Neumann series to $(I-X)^{-1}$, which we of course know is true by virtue of Corollary~\ref{cor:euneu} and Proposition~\ref{prop:borneu}. 

\begin{figure}[hbt]
  \begin{subfigure}{0.49\textwidth}
    \includegraphics[trim={9.5em 51ex 12em 55ex},clip,width=\textwidth]{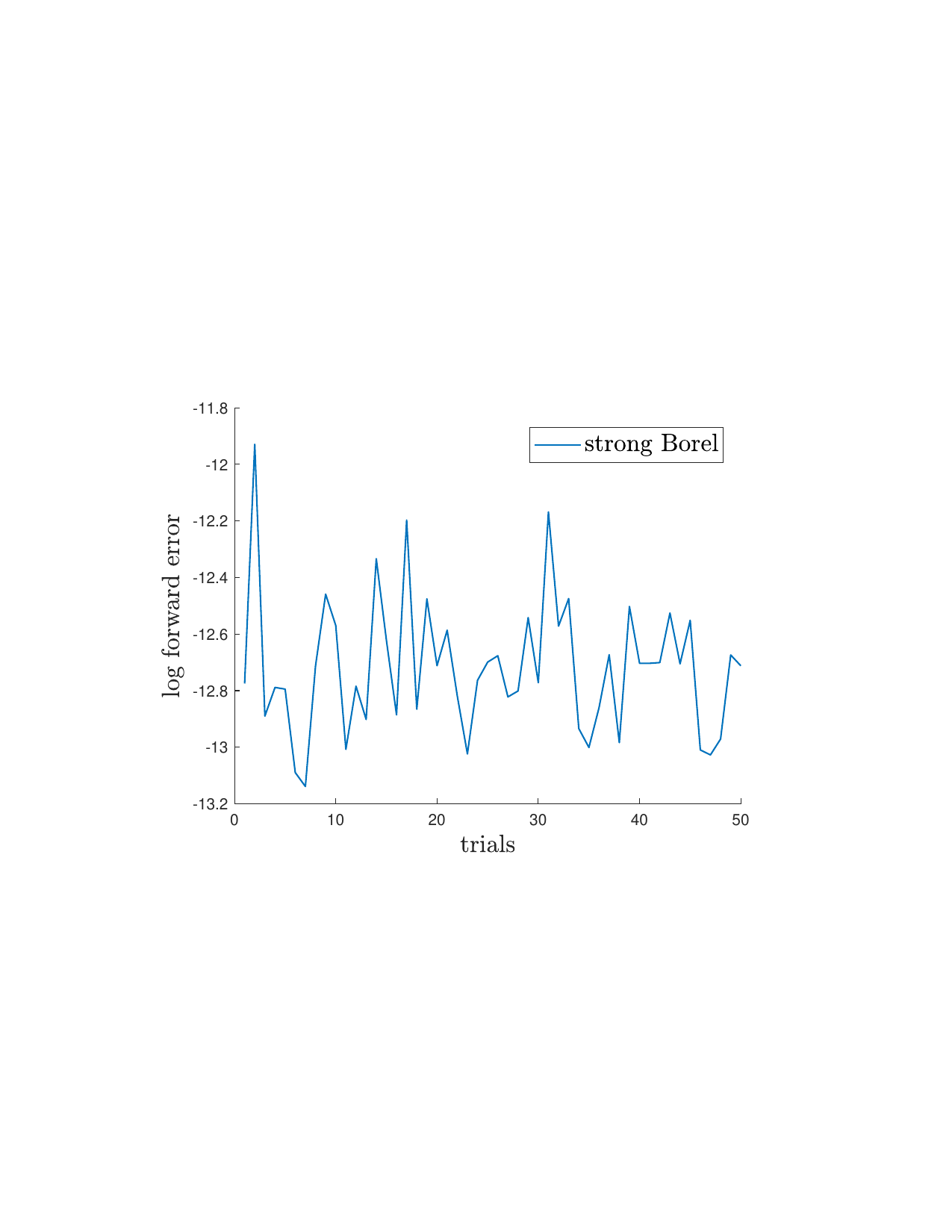}
    \caption{strong Borel method} \label{fig:borelf}
  \end{subfigure}
  \begin{subfigure}{0.49\textwidth}
    \includegraphics[trim={9.5em 51ex 12em 55ex},clip,width=\textwidth]{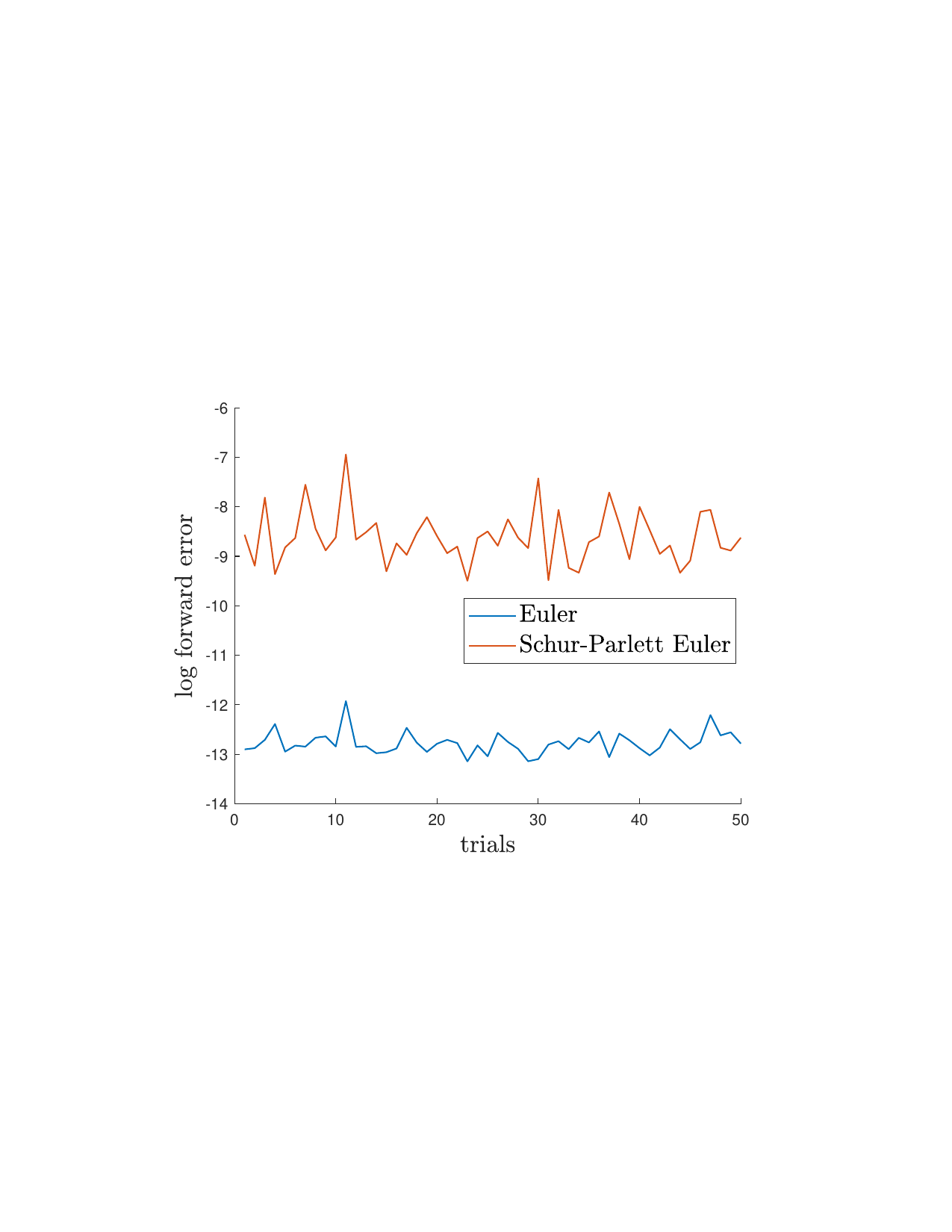}
    \caption{Euler method} \label{fig:eqf}
  \end{subfigure}%
\caption{Log forward errors of strong Borel and Euler summations.} \label{fig:neuf}
\end{figure}

Observant readers might have noticed an issue here. We do not really have $(I-X)^{-1}$ exactly but only the output of the \texttt{inv} function in \textsc{Matlab}, which is also subjected to floating point and approximation errors. Indeed our `forward errors' here are simply a measure of deviation from $\widehat{S}_{\texttt{inv}}$, the result of \texttt{inv} applied to $I-X$, computed in double precision. The backward errors $\norm{\widehat{S}(I-X)-I}$  for $\widehat{S}_{\texttt{inv}}$,  $\widehat{S}_{\doublemethod{E}{\rho}}$, $\widehat{S}_{\method{S B}}$, computed in double precision, provide a more equitable comparison and therein lies a surprise---when computed with compensated summation, $\widehat{S}_{\doublemethod{E}{\rho}}$, the result of Euler method, is more accurate than  $\widehat{S}_{\texttt{inv}}$, the result of \textsc{Matlab}'s \texttt{inv}, as is evident in Figure~\ref{fig:eqb}.

\begin{figure}[hbt]
  \begin{subfigure}{0.49\textwidth}
    \includegraphics[trim={9.5em 51ex 12em 55ex},clip,width=\textwidth]{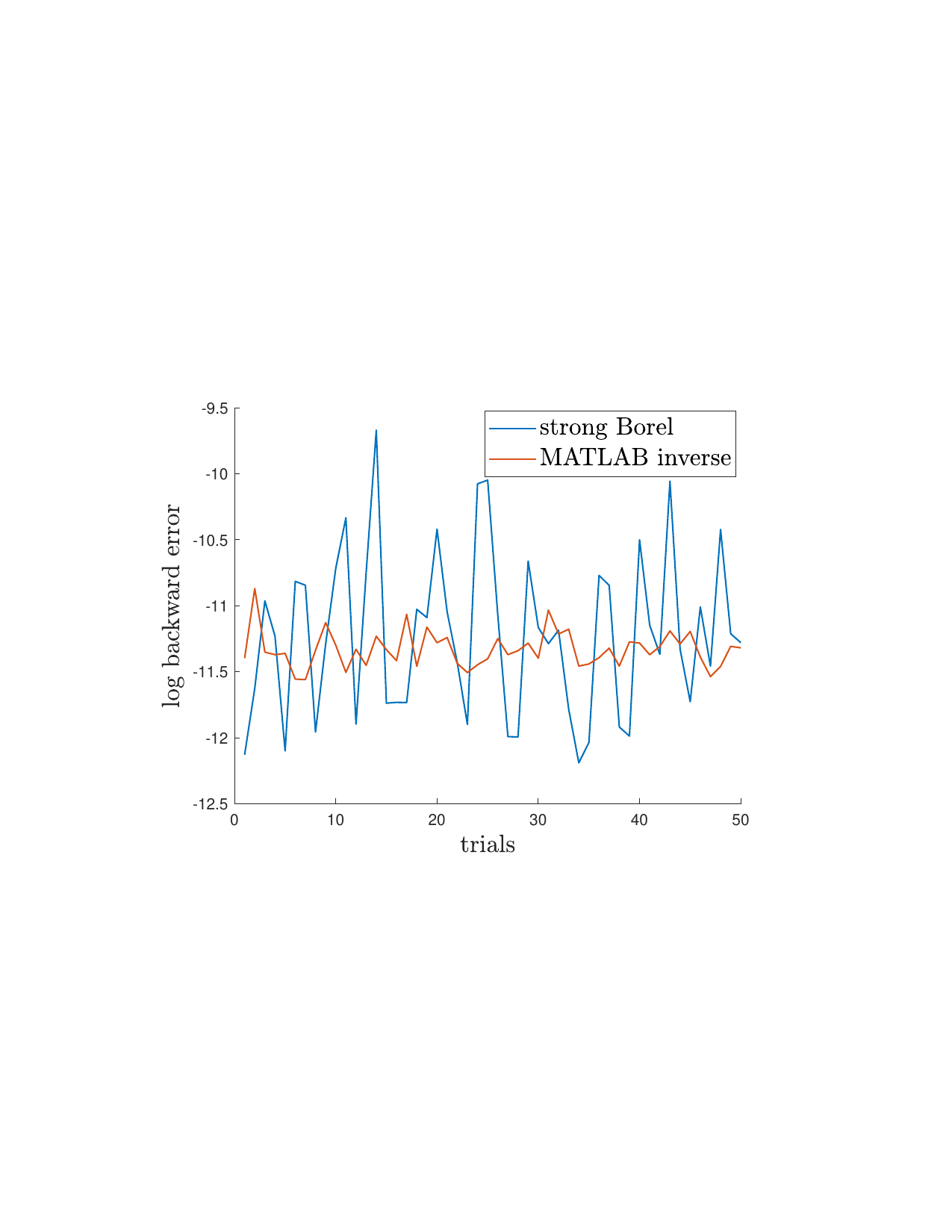}
    \caption{strong Borel method} \label{fig:borelb}
  \end{subfigure}
  \begin{subfigure}{0.49\textwidth}
    \includegraphics[trim={9.5em 51ex 12em 55ex},clip,width=\textwidth]{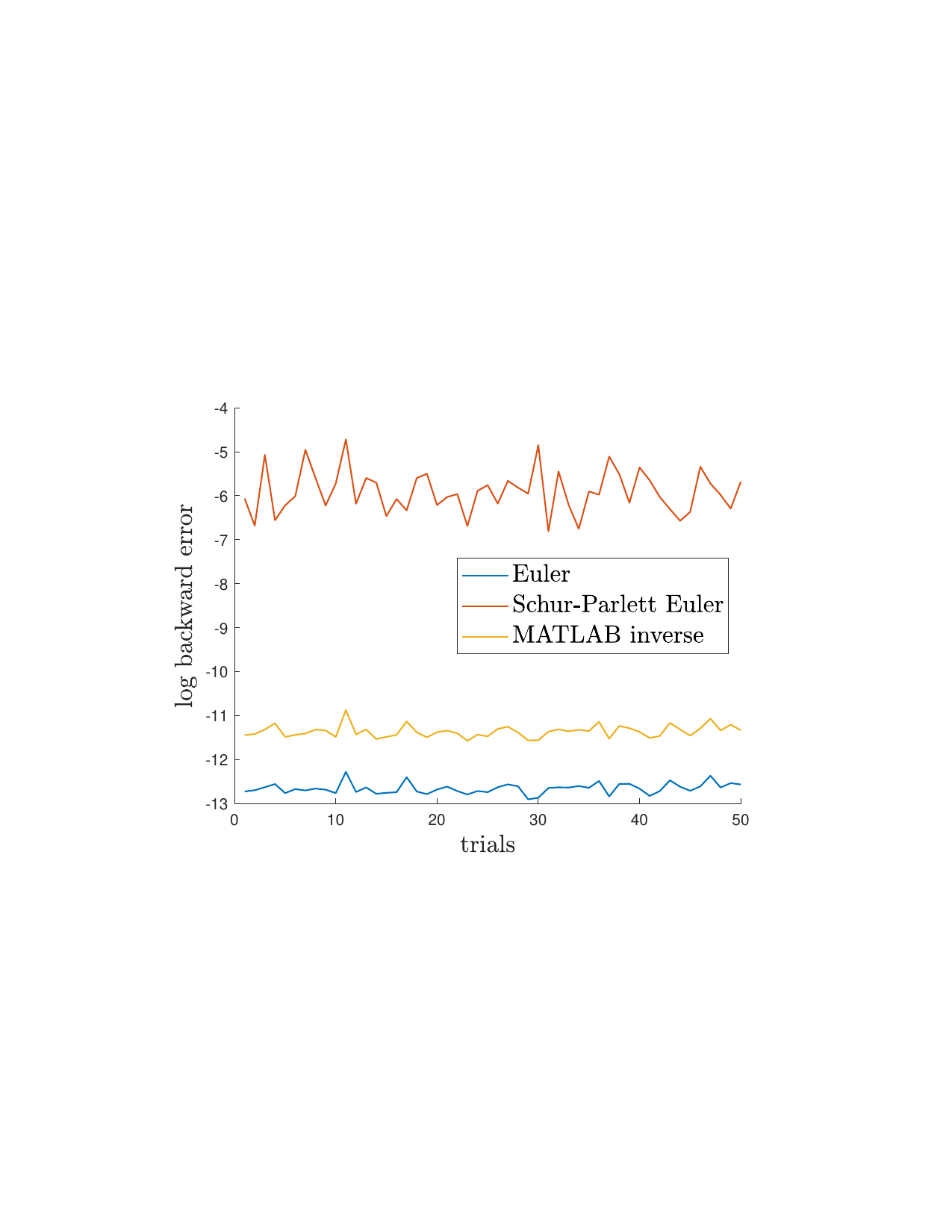}
    \caption{Euler method} \label{fig:eqb}
  \end{subfigure}%
\caption{Log backward errors of strong Borel and Euler summations.} \label{fig:neub}
\end{figure}

As both forward and backward errors in Figures~\ref{fig:eqf} and \ref{fig:eqb} reveal, the Schur--Parlett algorithm gives less accurate results for  Euler sums than  compensated summation. However, a comparison of their running times in Figure~\ref{fig:SchurParlett} shows that the former is significantly faster.

\begin{figure}[htb]
    \includegraphics[trim={9.5em 51ex 12em 55ex},clip,width=0.5\textwidth]{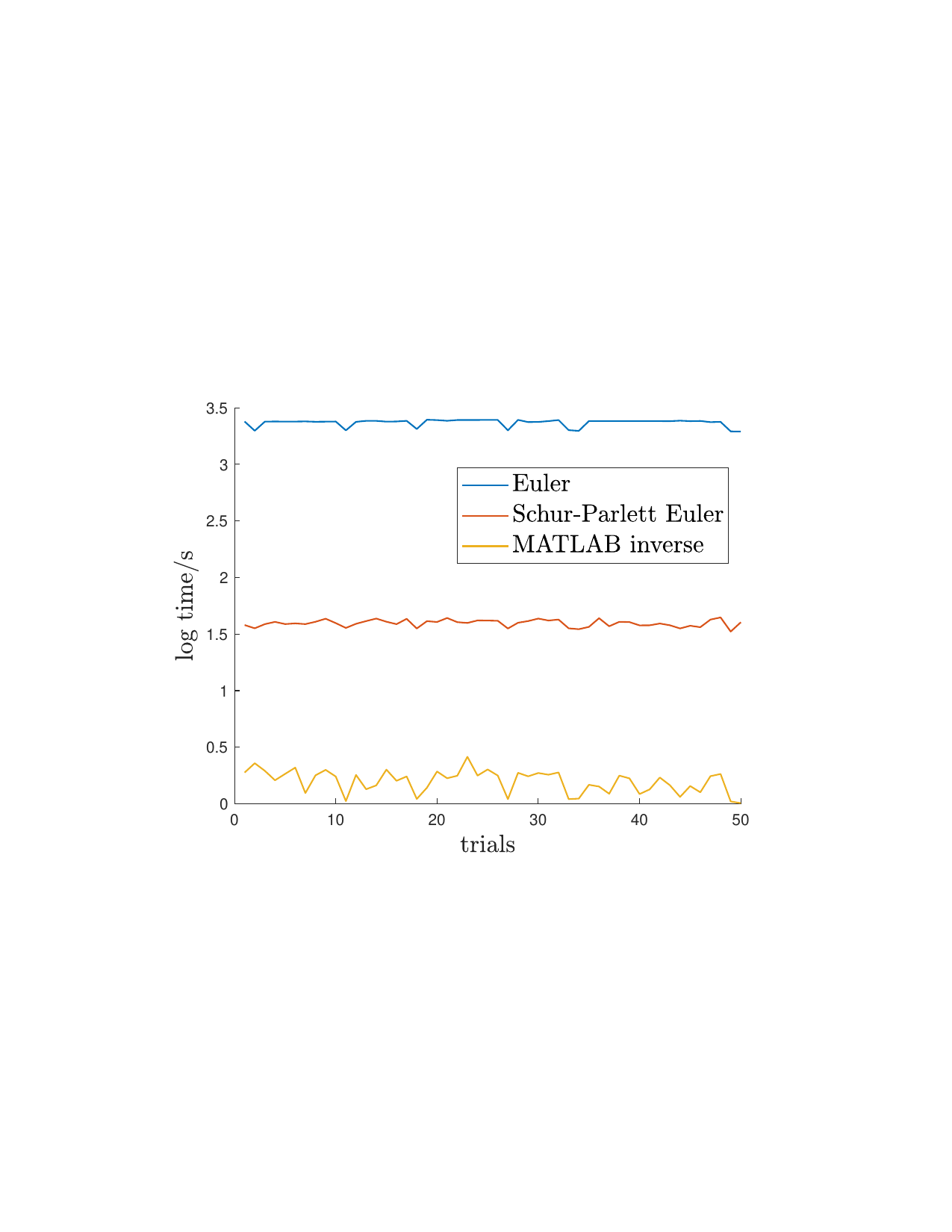}
    \caption{Run time comparison of compensated summation and Schur--Parlett algorithm on Euler sums.} \label{fig:SchurParlett}
  \end{figure}%
%\FloatBarrier

\subsection{High accuracy sums with Euler methods}\label{sec:acc}

The surprising accuracy of Euler method computed with compensated summation uncovered in Section~\ref{sec:neu} deserves a more careful look. Here we will examine how the value of $\rho$ impacts its accuracy.

We generate twenty bidiagonal matrices $X\in \mathbb{R}^{1000 \times 1000}$ whose diagonal entries are negative with probability  $\alpha \in \{ 0, 0.01,\dots,0.99,1\}$. These matrices are generally not diagonalizable but we may readily prescribe their eigenvalue distribution. Again we will use the Neumann series $\sum_{k=0}^\infty X^k$, whose value $S = (I-X)^{-1}$ is known, as our test case. We approximate it with a $100$-term truncated Taylor series and a $100$-term Euler sum
\[
    \widehat{S}\coloneqq \sum_{k=0}^{100} X^k \quad \text{and} \quad \widehat{S}_{\doublemethod{E}{\rho}} \coloneqq \sum_{k=0}^{100} \E_k^\rho (X_\bullet),
\]
with $\rho \in \{ 1, 1/2, 1/4 \}$, using compensated summation in Algorithm~\ref{alg:comp} to compute these sums. For a bidiagonal $X$ we know $S = (I-X)^{-1}$ exactly in closed form and do not need to rely on \textsc{Matlab}'s \texttt{inv}, we may compute the forward errors $\norm{\widehat{S}_{\doublemethod{E}{\rho}} - S}$ and $\norm{\widehat{S} - S}$. The logarithm of these values averaged over the twenty trials are plotted against $\alpha$ in Figure~\ref{fig:accelerate}.
\begin{figure}[htb]
\includegraphics[trim={9em 51ex 9em 55ex},clip,width=0.55\textwidth]{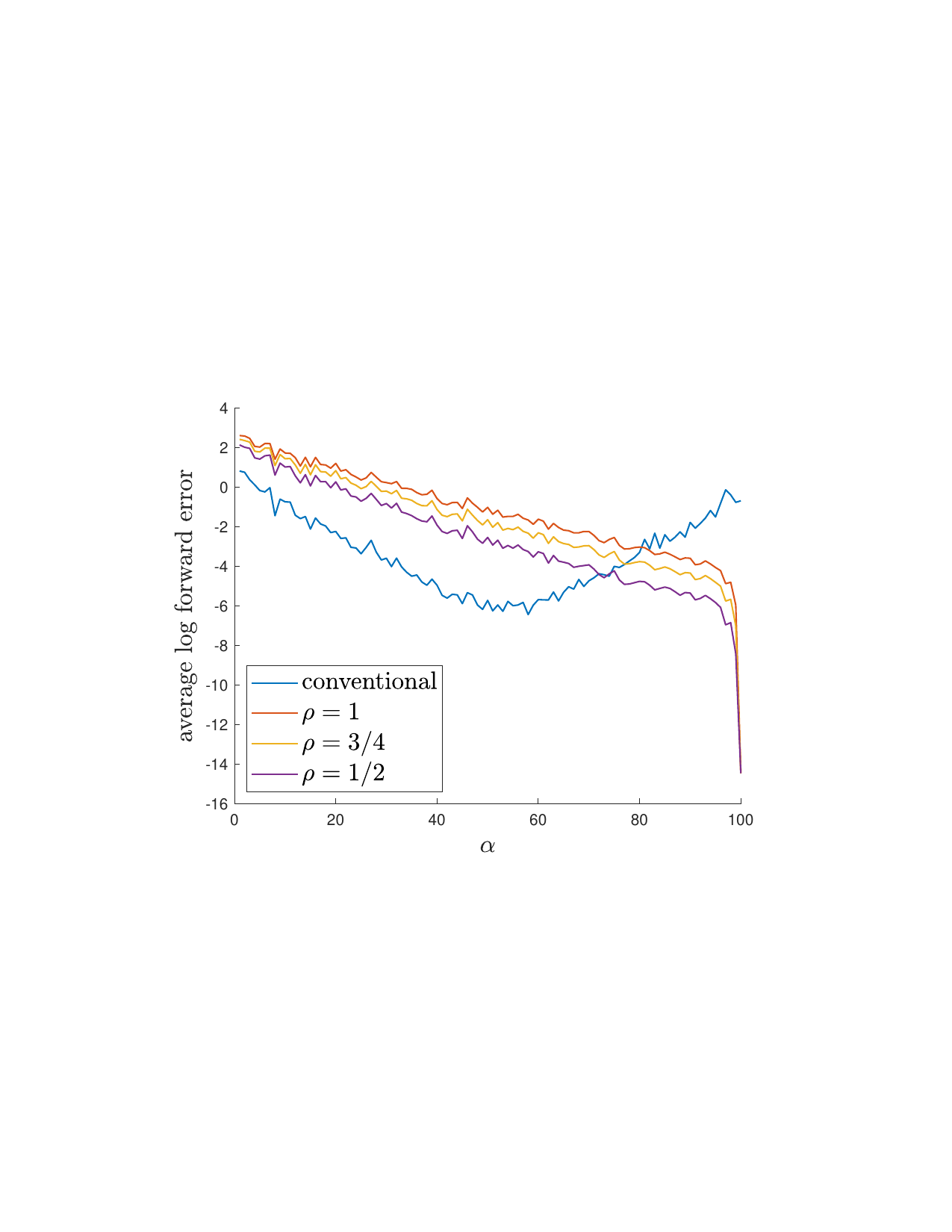}
    \caption{Log errors from Euler methods.}
    \label{fig:accelerate}
\end{figure}

We highlight two observations. Firstly, the downward trend of the curves for Euler method with increasing $\alpha$ shows that when the eigenvalues are predominantly negative, a truncated Euler sum gives a much higher level of accuracy with $100$ terms than a truncated Taylor series with the same number of terms. This implies that Euler sums converge much faster than Taylor series. Secondly, when using Euler summation, smaller values of $\rho$ lead to faster convergence than larger ones.
%\FloatBarrier

\subsection{Extending summability range with Ces\`aro and Euler summations}\label{sec:pade-exp}

For any $\alpha \in \mathbb{R}$, the binomial series on the left
\begin{equation}\label{eq:binom}
\sum_{k=0}^\infty \binom{\alpha}{k}X^k = (I + X)^\alpha
\end{equation}
converges to the value on the right in the conventional sense whenever $\lambda(X) \subseteq (-1,1)$. Here $\binom{\alpha}{k} \coloneqq \alpha(\alpha-1)\cdots(\alpha-k+1)/k!$ is the binomial coefficient, defined for any $\alpha \in \mathbb{R}$. This provides a good test case as the series on the left is an infinite series for any non-integer  $\alpha$ but sums to the closed-form expression on the right conventionally if every eigenvalue $\lambda$ of $X$ satisfies $\lvert \lambda \rvert < 1$. However if there is any eigenvalue with $\lvert \lambda \rvert > 1$, then the series on the left necessarily diverges in the conventional sense.  Our experiment will show that this range  can be vastly extended.

We compute the $(m,n)$-Pad\'e--Ces\`aro approximants, i.e., the $B_{m+n,k}$'s in \eqref{eq:pade} are given by
\[
B_{m+n,k} = \binom{\alpha}{k} \frac{m+n+1-k}{m+n+1}I.
\]
We also compute the $(m,n)$-Pad\'e--Euler approximants, i.e., the $B_{m+n,k}$'s in \eqref{eq:pade} are from the $P$-Euler transform in \eqref{eq:Aq} with $P = \rho I$:
\[
B_{m+n,k} = \binom{\alpha}{k}\sum_{j=k}^{m+n} \binom{m+n+1}{j+1}\frac{\rho^{m+n-j}}{(1+\rho )^{m+n+1}}I.
\]
We set $m = n$ and denote these approximants by $\widehat{S}_{\mathsf{C},n}$ and $\widehat{S}_{\mathsf{E},n}$ respectively. We fix $\rho = 100$  and let $\alpha$ and $n$ run over
\[
\alpha = \pm \frac14, \; \pm \frac12, \; \pm \frac34, \; \pm \frac35, \; \pm \frac{4}{7}; \qquad n = 1,2,\dots,20.
\]
For each value of $\alpha$ and $n$, we repeat our experiments for ten matrices $X \in \mathbb{R}^{10 \times 10}$, generated as $X = QRQ^\tp$ with a random orthogonal matrix $Q$ and a random upper triangular $R$ with diagonal entries randomly chosen in $[75, 150]$. In other words,  the spectrum  $\lambda(X) \subseteq [75,150]$, way beyond the range of convergence $(-1,1)$ of the binomial series.

\begin{figure}[h]
  \begin{subfigure}{0.49\textwidth}
    \includegraphics[trim={9.5em 51ex 12em 55ex},clip,width=\textwidth]{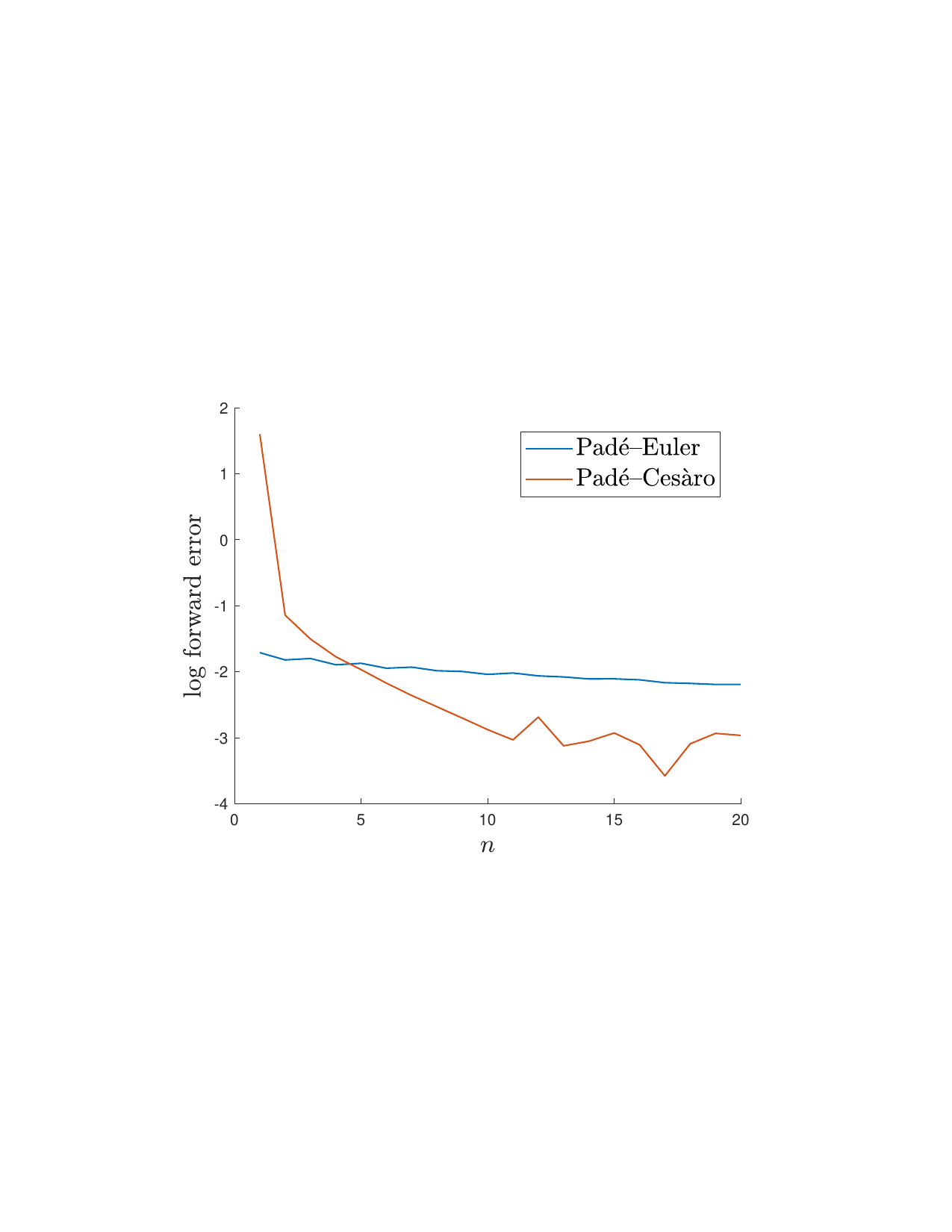}
    \caption{increasing order of approximants} \label{fig:pade-range}
  \end{subfigure}
  \begin{subfigure}{0.49\textwidth}
    \includegraphics[trim={9.5em 51ex 11em 55ex},clip,width=\textwidth]{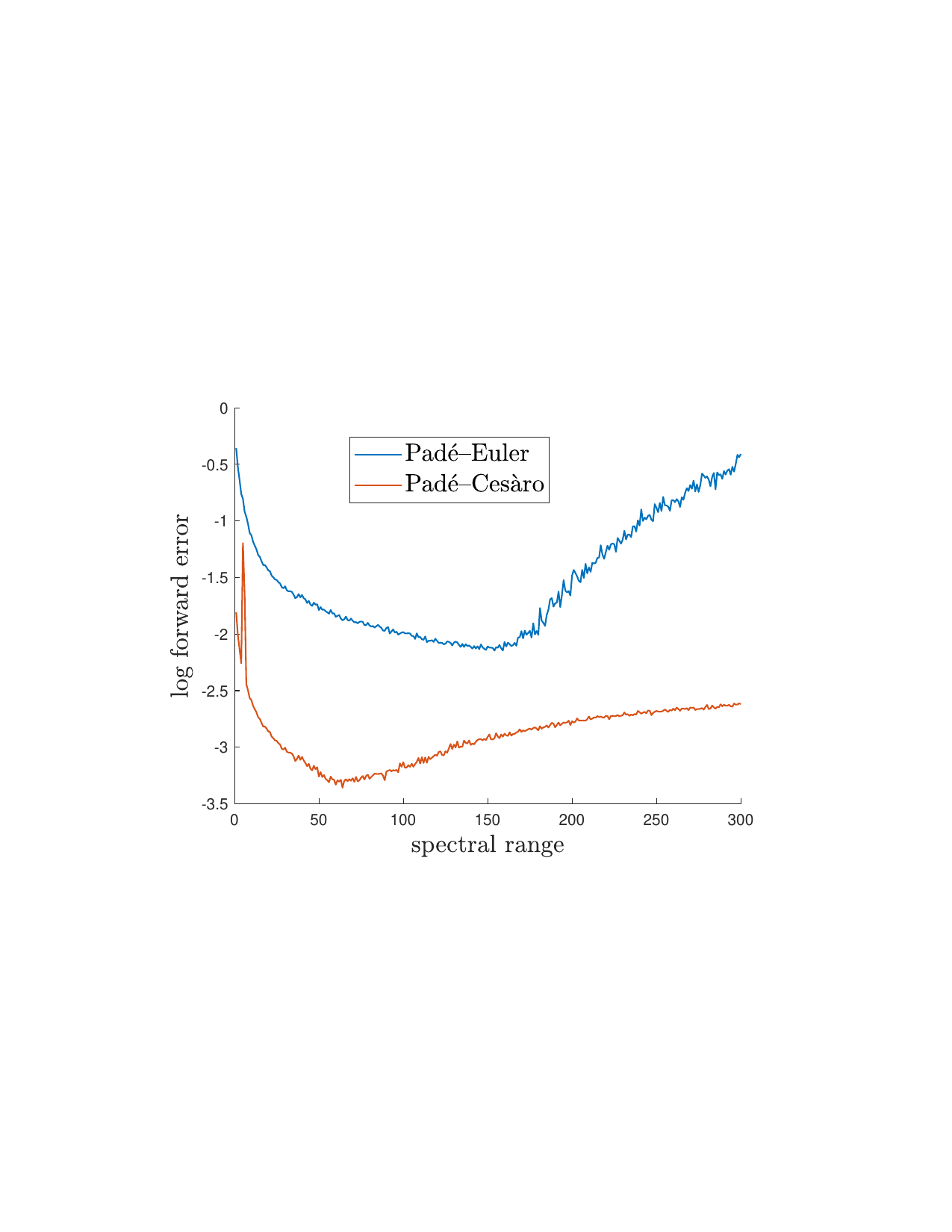}
    \caption{increasing spectral range of $X$} \label{fig:pade-spectral}
  \end{subfigure}%
\caption{\blue{Log forward errors of Pad\'e--Ces\`aro and Pad\'e--Euler approximants.}} \label{fig:pade}
\end{figure}

There is not much variation across different values of $\alpha$ and so we will present a typical case $\alpha = -3/4$.
We will treat $(I+X)^{-3/4}$ computed using \textsc{Matlab}'s \texttt{mpower} function as the true value. With this, we obtain forward errors
$\lVert \widehat{S}_{\mathsf{C},m} - (I+X)^{-3/4}\rVert_{\infty,1}$ and $\lVert \widehat{S}_{\mathsf{E},m} - (I+X)^{-3/4}\rVert_{\infty,1}$;
note that we are computing absolute errors in the $(\infty,1)$-norm $\lVert X \rVert_{\infty,1} = \max_{i,j=1,\dots,n}\; \lvert x_{ij} \rvert$ in order to show the number of correct decimal digits. We plot their average values on a log scale in Figure~\ref{fig:pade-range}. As is clear from the plots, both Pad\'e approximants converge to the right-hand side of \eqref{eq:binom}, despite being in a spectral range that falls far outside $(-1,1)$.

To see how far we can go before these approximants begin to show signs of divergence, we generate $X$ as described above but now with $\lambda(X) \subseteq [\frac12 r, r]$ for $r$ going up to $300$. We show the forward error for the $(15,15)$-Pad\'e--Ces\`aro and Pad\'e--Euler approximants on a log scale in Figure~\ref{fig:pade-spectral}. While Pad\'e--Euler  starts diverging when $\lambda(X) \subseteq [80, 160]$, Pad\'e--Ces\`aro is remarkably resilient, providing reasonably values even when $\lambda(X) \subseteq [150, 300]$.

\subsection{Matrix Dirichlet series with Lambert summation}\label{sec:dirichlet}

A Dirichlet series is a scalar series
\[
\sum_{n=0}^\infty \frac{a_n}{n^z}
\]
where $a_\bullet\in s(\mathbb{C})$ and $z$ is a complex variable. The best-known Dirichlet series is the Riemann zeta function
\[
\zeta(z) = \sum_{n=1}^\infty \frac{1}{n^z}.
\]
Another well-known Dirichlet series is one whose coefficients are given by $a_n = \mu(n)$, where
\[
\mu(n) = \begin{dcases*}
        1 & $n$ is square-free with an even number of prime factors,\\
        -1 & $n$ is square-free with an odd number of prime factors,\\
        0 & $n$ is not square-free,
    \end{dcases*}
\]
is the M\"obius function. It turns out that for any $z \in \mathbb{C}$ with $\re(z) > 1$,
\[
\sum_{n=1}^\infty \frac{\mu(n)}{n^z} = \frac{1}{\zeta(z)}
\]
and
\begin{equation}\label{eq:mob2}
    \lim_{z \to 1}\sum_{n=1}^\infty \frac{\mu(n)}{n^z} = 0.
\end{equation}
An important application of the scalar Lambert summation \cite[Lemma~2.3.7]{Lambert-book} is to show that
\begin{equation}\label{eq:mob}
    \sum_{n=1}^\infty \frac{\mu(n)}{n} \overeq{L} 0
\end{equation}
and our goal here is to verify a matrix analogue numerically.

It is straightforward to extend the definitions above. A matrix Dirichlet series is a matrix series
\[
\sum_{n=0}^\infty a_n n^{-X}
\]
where $X$ is a complex matrix variable that takes values in $\mathbb{C}^{d \times d}$ and 
\[
n^X \coloneqq \exp(\log(n)X),
\]
with $\exp$ the matrix exponential function \cite[Chapter~10]{funcofmat}. Our numerical experiments show that if $X \in \mathbb{C}^{d \times d}$ has $\re(X) \succeq I$,  then
\begin{equation}\label{eq:mob3}
    \sum_{n=1}^\infty \mu(n) n^{-X} 
\end{equation}
is Lambert summable in the sense of Definition~\ref{def:lambert} and 
\begin{equation}\label{eq:mob4}
    \lim_{X \to I }  \biggl( \sum_{n=1}^\infty \mu(n) n^{-X} \biggr) \overeq{L} 0.
\end{equation}
This is a matrix analogue of \eqref{eq:mob2} and \eqref{eq:mob}. Unlike the scalar version in \eqref{eq:mob2}, which is conventionally summable, our matrix version in \eqref{eq:mob4} requires Lambert summation as the matrix Dirichlet series \eqref{eq:mob3} is not conventionally summable if $1 \in \lambda(X)$, but is nevertheless Lambert summable.

\begin{figure}[htb]
\includegraphics[trim={10em 51ex 10em 55ex},clip,width=0.55\textwidth]{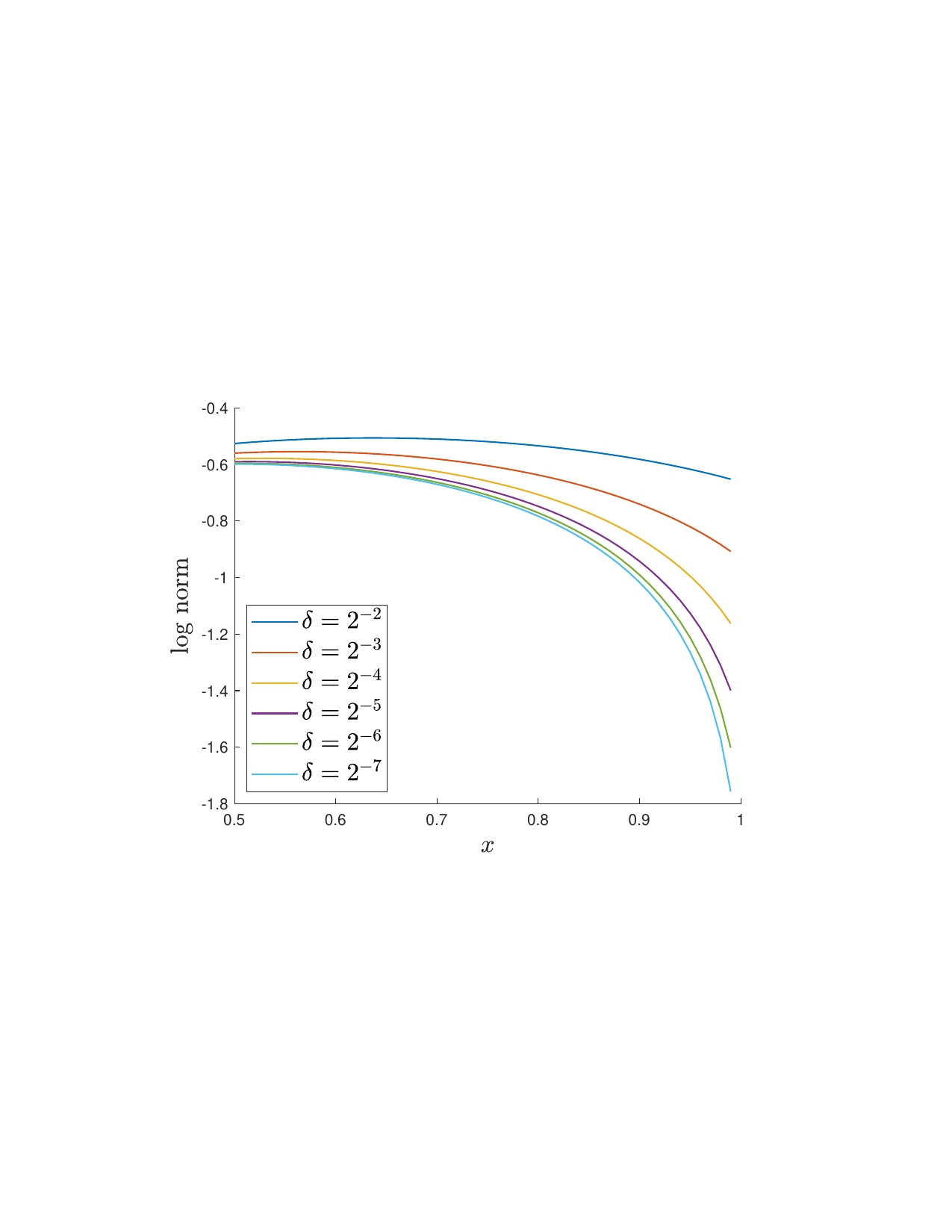}
    \caption{Lambert approximation of the Dirichlet series.}
    \label{fig:mobius}
\end{figure}

To verify \eqref{eq:mob4} numerically, we generate random matrices $X \in \mathbb{C}^{1000 \times 1000}$ with $\re(X) \succeq I$ and $\lVert X-I\rVert = \delta$ for $\delta = 2^{-2},2^{-3},\dots,2^{-7}$, and compute 
\[
\widehat{S} = (1-x)\sum_{n=1}^{10000} \frac{nx^n}{1-x^n} \mu(n) n^{-X}
\]
to approximate the Lambert sum as $x\to 1^-$. As shown in Figure~\ref{fig:mobius}, for each $\delta$, $\lVert \widehat{S} \rVert$ approaches a limiting value as $x \to 1^-$, and $\lVert \widehat{S} \rVert \to 0$ as $\delta \to 0$ or, equivalently, $X \to I$.

\subsection{Recursive versus compensated summations}\label{sec:comp_numerical}

We present two sets of experiments to compare recursive summation in \eqref{eq:naive} with compensated summation in Algorithm~\ref{alg:comp}, focusing on how the errors scale with respect to series length and matrix dimensions. 

For $X \in \mathbb{R}^{d\times d}$, we consider the $n$-term Neumann series
\begin{equation}\label{eq:neu}
    \sum_{k=0}^{n-1} X^k = (I-X^n)(I-X)^{-1} \eqqcolon S
\end{equation}
for  fixed $n = 1000$ and $d = 1,\dots, 1000$. We also consider its $n$-term Hadamard analogue, i.e., with power taken with respect to the Hadamard product
\begin{equation}\label{eq:had}
    \sum_{k=0}^{n-1} X^{\circ k}  = S_\circ, \quad s_{ij}^\circ = \frac{1-x_{ij}^n}{1-x_{ij}}, \quad i,j=1,\dots,d,
\end{equation}
for fixed $d = 1000$ and $n = 1,\dots, 5000$. In both cases we have the respective closed-form expressions for $S$ and $S_\circ$ on the right of \eqref{eq:neu} and \eqref{eq:had} that give their exact values and thereby permit calculation of forward errors.

We compute $\widehat{S}$, the sum on the left of \eqref{eq:neu}, and  $\widehat{S}_\circ$  the sum on the left of \eqref{eq:had} using both recursive summation in \eqref{eq:naive} and compensated summation in Algorithm~\ref{alg:comp}. The forward errors $\norm{\widehat{S} - S}_{\method{F}}$ and $\norm{\widehat{S}_\circ - S_\circ}_{\method{F}}$ are shown in Figures~\ref{fig:d} and \ref{fig:n} respectively. The result is clear: compensated summation is consistently more accurate than recursive summation, particularly with respect to increasing series length $n$, where the increase in errors follow significantly different trends.

While our forward error bound \eqref{eq:compensated_bound} predicts that the errors in compensated summation should be free of any dependence on $n$, this is assuming that we know the $k$th term \emph{exactly}. In our sum \eqref{eq:had}, the $k$th term $X^{\circ k}$ is \emph{computed}, and the increase in errors we see in Figure~\ref{fig:n} is a result of the multiplication errors accumulating in $\widehat{S}_\circ $ as $n$ increases.

\begin{figure}[htb]
  \begin{subfigure}{0.49\textwidth}
    \includegraphics[trim={9.5em 51ex 10.5em 53ex},clip,width=\textwidth]{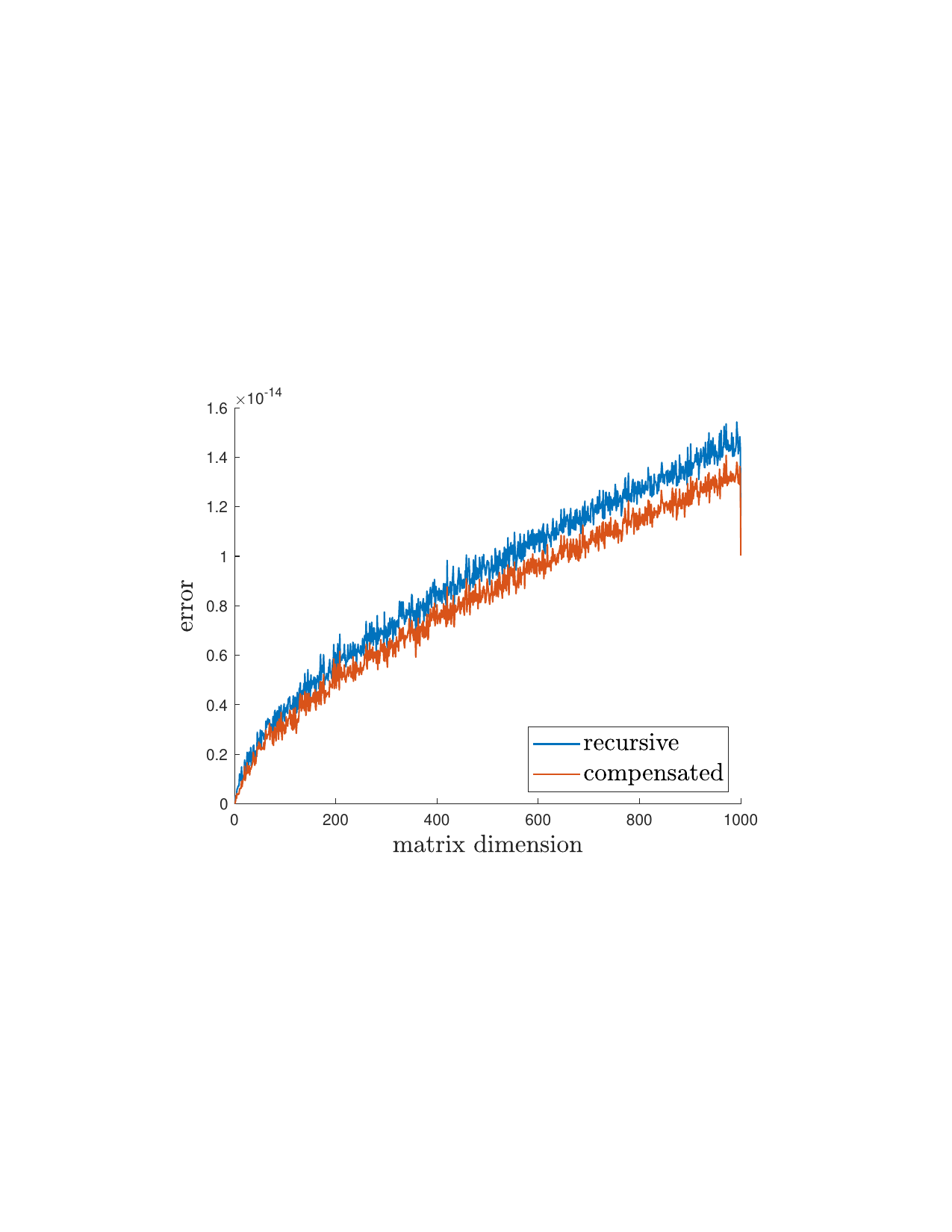}
    \caption{increasing matrix dimension $d$} \label{fig:d}
  \end{subfigure}%
  \begin{subfigure}{0.49\textwidth}
    \includegraphics[trim={9.5em 51ex 10.5em 53ex},clip,width=\textwidth]{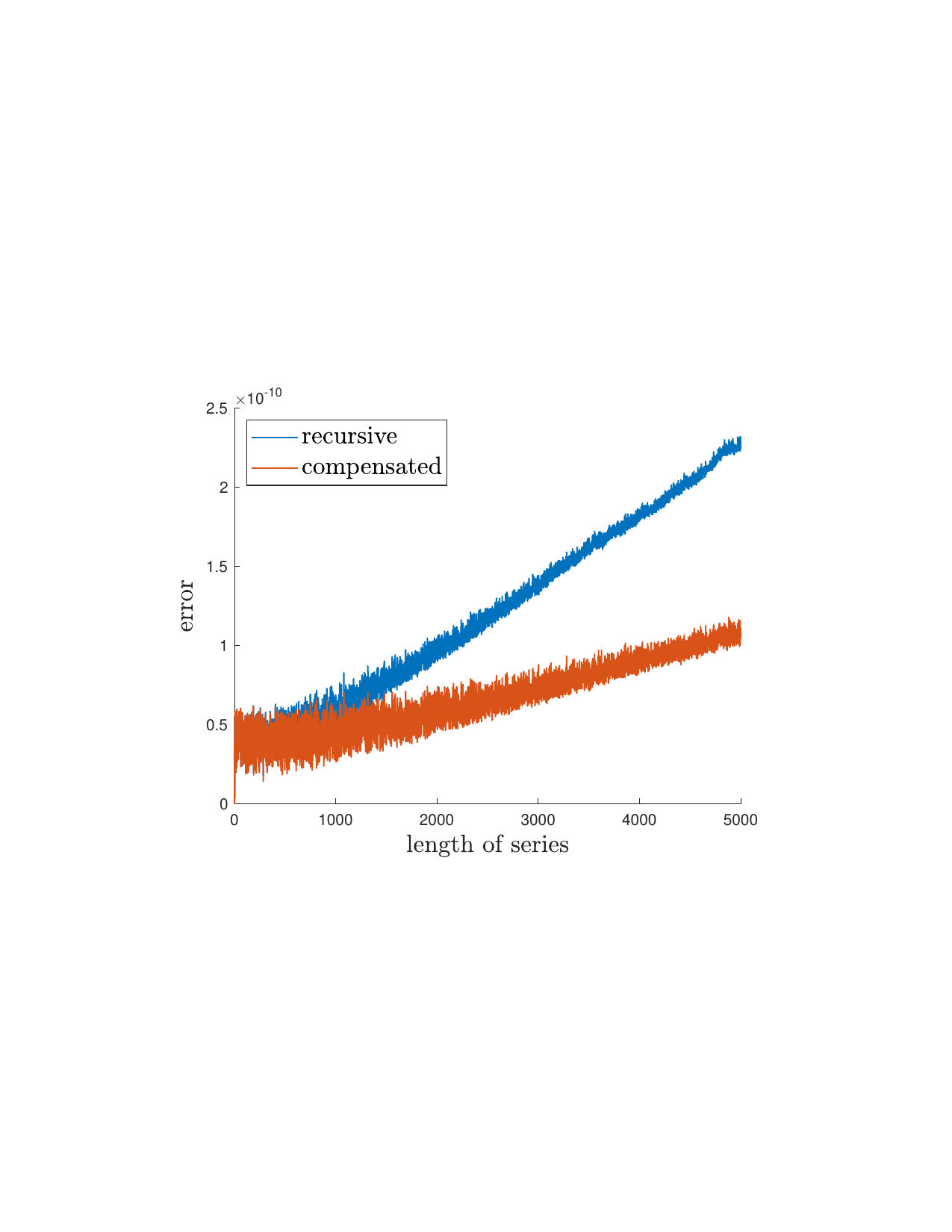}
    \caption{increasing series length $n$} \label{fig:n}
  \end{subfigure}%
\caption{Errors for recursive and compensated summation algorithms.} \label{fig:recur_comp}
\end{figure}

In case the reader is wondering why we did two different sets of experiments with respect to standard and Hadamard products: Hadamard products will not  reveal the dependence on $d$ in Figure~\ref{fig:d} as they are computed entrywise; whereas standard products will result in the multiplicative errors masking the trend in Figure~\ref{fig:n} showing dependence on $n$, as computing $X^k$ requires an order of magnitude more multiplications than computing $X^{\circ k}$. 

\section{Conclusion}\label{sec:conclusion}

This article is likely the first systematic study of summation techniques, both theoretical and numerical, for matrix series. Indeed we are unable to find much discussion of general numerical algorithms even for summing \emph{conventionally convergent} matrix series, let alone the more convoluted summation methods for matrix series divergent in the conventional sense. The handful of previous works we found \cite{Higham-Schur--Parlett,funcofmat,scaling-squaring} had all focused on conventional summation of specific matrix Taylor series related to matrix functions, and said nothing of other summation methods or more general matrix series. Despite the length of our article, it still leaves significant room for future work, with several immediate open problems that we will briefly describe.

Our extensions of matrix Abelian mean in Definition~\ref{def:Abemeans}, matrix Lambert sum in Definition~\ref{def:lambert}, weak and strong matrix Borel sums in Definitions~\ref{def:wborel} and \ref{def:sborel}, leave open the question of whether one may further extend them by replacing the scalar parameter $x$ in these definitions by a positive definite matrix. One may also ask a similar question of the matrix Mittag-Leffler sum in Definition~\ref{def:ml}: Could the gamma function be replaced by the matrix gamma function \cite{gamma}?

Another aspect beyond the scope of this article is that of conditioning, which likely explains the surprising accuracy of Euler method over matrix inversion uncovered in Section~\ref{sec:neu}. Note that the left- and right-hand sides of  \eqref{eq:neumann}, despite being equal in value, involve two different computational problems and almost surely have entirely different condition numbers. What is lacking is a study of the condition numbers of the summation methods in Sections~\ref{sec:seq} and \ref{sec:func}.

The numerical methods in Sections~\ref{sec:gen} and \ref{sec:pow} are mainly designed with accuracy in mind. They work well when adapted for matrix series and in conjunction with the summation methods in Sections~\ref{sec:seq} and \ref{sec:func}. When it comes to speed, there are many acceleration methods for scalar series such as Aitken's $\delta^2$-process and the vector $\varepsilon$-algorithm \cite{extrapolation, epsilon_algorithm, RecipesC}, but these involve nonlinear transforms and adapting them for matrix series is a challenge we save for the future.

As we alluded to in the introduction, these summation methods will allow for numerical investigations of ``random matrix series,'' one that has its $k$th term $A_k$ randomly generated according to some distributions like Wishart or GUE \cite{rand}. Many celebrated results in random matrix theory were indeed discovered first through numerical experiments and only rigorously proved much later.

\subsection*{Acknowledgments} This work is partially supported by the DARPA grant HR00112190040, the NSF grants DMS 1854831 and ECCS 2216912, and a Vannevar Bush Faculty Fellowship ONR N000142312863. We thank the two anonymous reviewers for their very helpful comments and suggestions.
    
\bibliographystyle{abbrv}

\end{document}